\DeclareMathSymbol{\in}{\mathrel}{symbols}{"32}
\DeclareMathSymbol{\subset}{\mathrel}{symbols}{"1A}
\DeclareMathSymbol{\supset}{\mathrel}{symbols}{"1B}
\DeclareMathSymbol{\cup}{\mathbin}{symbols}{"5B}
\DeclareMathSymbol{\cap}{\mathbin}{symbols}{"5C}
\DeclareMathSymbol{\wedge}{\mathbin}{symbols}{"5E}
\DeclareMathSymbol{\vee}{\mathbin}{symbols}{"5F}
\DeclareMathSymbol{\land}{\mathbin}{symbols}{"5E}
\DeclareMathSymbol{\lor}{\mathbin}{symbols}{"5F}
\DeclareMathSymbol{\Leftarrow}{\mathrel}{symbols}{"28}
\DeclareMathSymbol{\Rightarrow}{\mathrel}{symbols}{"29}
\DeclareMathSymbol{\Leftrightarrow}{\mathrel}{symbols}{"2C}
\def\Xint#1{\mathchoice
{\XXint\displaystyle\textstyle{#1}}%
{\XXint\textstyle\scriptstyle{#1}}%
{\XXint\scriptstyle\scriptscriptstyle{#1}}%
{\XXint\scriptscriptstyle\scriptscriptstyle{#1}}%
\!\int}
\def\XXint#1#2#3{{\setbox0=\hbox{$#1{#2#3}{\int}$ }
\vcenter{\hbox{$#2#3$ }}\kern-.6\wd0}}
\def\dashint{\Xint-}
\newtheorem{theorem}{Theorem}[section]
\newtheorem{lemma}[theorem]{Lemma}
\newtheorem{proposition}[theorem]{Proposition}
\newtheorem{corollary}[theorem]{Corollary}
\newtheorem*{corollary*}{Corollary}
\theoremstyle{definition}
\newtheorem{definition}[theorem]{Definition}
\theoremstyle{remark}
\newtheorem{remark}[theorem]{Remark}
\newtheorem{question}[theorem]{Question}
\newcommand{\subref}[2]{\hyperref[#2]{\ref{#1}.\ref{#2}}}
\numberwithin{equation}{section}
\crefname{equation}{}{}
\newcommand{\sq}[1]{\widetilde{#1}}
\newcommand{\R}{\mathbb{R}}
\newcommand{\Z}{\mathbb{Z}}
\newcommand{\N}{\mathbb{N}}
\newcommand{\C}{\mathbb{C}}
\newcommand{\mc}[1]{\mathcal{#1}}
\newcommand{\ov}[1]{\overline{#1}}
\newcommand{\ang}[1]{\langle #1 \rangle}
\newcommand{\dist}{\textup{dist}}
\DeclareMathOperator*{\einf}{ess \, inf}
\DeclareMathOperator*{\esup}{ess \, sup}
\title{Medians, Oscillations, and Distance Functions}
\author{Marcus Pasquariello \and Ignacio Uriarte-Tuero}
\begin{document}

\begin{abstract}
\cite{Vasin_LimitSetOfFuchsianGroupAndDynkinsLemma} (for $n=1$) and \cite{anderson2022weakly} (for $n>1$) provided a geometric characterization of the sets $E \subset \R^n$ so that $w = \dist(\cdot, E)^{-\alpha}$ is a Muckenhoupt $A_1$ weight for some $\alpha > 0$. In this paper, we provide  a geometric characterization of the sets $E \subset \R^n$ (which we call \emph{median porous sets}) so that $w = \dist(\cdot, E)^{-\alpha}$ is a Muckenhoupt $A_p$ weight for some $\alpha > 0$ (given any $1 < p \leq \infty$).

Given $1 < p \leq \infty$, we also find the precise range of exponents $\alpha$ so that $w = \dist(\cdot, E)^{-\alpha} \in A_p$, in analogy to the $p=1$ case done in \cite{anderson2022weakly}.

With our characterization we prove that $\R^n \setminus E$ supports a Hardy-Sobolev inequality if $E$ is an appropriate median porous set. All previous such results that we are aware of make the strictly stronger assumption that the set $E$ is porous, e.g. \cite{dyda2017muckenhoupt}, \cite{LehrbackVahakangasInbetweenSobolevandHardy}. As far as we know, this is the first instance in the literature that the ``porosity barrier" is broken in this context. 

Examples of such appropriate median porous (but not porous) sets were known. We provide further such examples, additional applications to weighted Poincar\'e inequalities, and a geometric characterization of the nonnegative H\"older continuous functions $w$ such that $\log (w) \in BMO$. 

We prove that two of the methods we use ($A_p$ and Riesz potential methods) are sharp, i.e. they cannot be improved beyond the results we obtain.

The proofs rely on a new median-value characterization of $BMO$: For a real-valued measurable function on $\R^n$ and constants $0 < s < t < 1$,
    \[\|f\|_{BMO} \approx_{s, t, n} \sup_{Q}[M_t(f, Q) - M_s(f, Q)]\]
    where $M_s(f, Q)$ denotes the $s$-median value of $f$ on $Q$.

\end{abstract}
    
\maketitle

\section{Introduction}\label{sec:intro}

\subsection*{Quick overview}

A main theme of this paper is various geometric notions of porosity of a set $E \subset \R^n$ (i.e. $E$ having holes at various locations and scales) and their equivalence to the function $\dist(\cdot, E)^{-\alpha}$ being a so-called Muckenhoupt $A_p$ weight, which property has proved very useful in harmonic analysis and PDEs.

Before giving the various definitions, recent partial results, and our solution to the problem, let us briefly go over (a necessarily incomplete) history of this problem, which also highlights some of its applications. 

Aikawa \cite{AikawaQuasiadditivityofRieszcapacity} made crucial use of the function $\dist(\cdot, E)^{-\alpha}$ being an $A_p$ weight for a set $E$ with positive Aikawa codimension, (which is now known to be equivalent to positive Assouad codimension -see \cite{Lehrbäck2013dimension}, in turn equivalent to porosity), to prove quasiadditivity of Riesz capacities with respect to the Whitney decomposition of $\R^n \setminus E$. 

Horiuchi \cite{HoriuchiImbeddingTheoremsForWeightedSobolevSpaces2} also strongly used the function $\dist(\cdot, E)^{-\alpha}$ being an $A_p$ weight for a set $E$ with positive Assouad codimension (and thus a porous set), to prove weighted embedding theorems for Sobolev spaces, and deduce from those uniqueness of weak solutions, a Harnack inequality for positive solutions, and interior and boundary regularity for weak solutions of degenerate elliptic PDEs. (Horiuchi phrased the positive Assouad codimension hypothesis in terms of his $P(s)$ condition, but we now know they are equivalent, thanks to \cite{LehrbackVahakangasInbetweenSobolevandHardy}.)

Aimar et al. in \cite{AimarCarenaDuranToschi_PowersOfDistancesToLowerDimensionalSetsAsMuckenhouptWeights} prove that for Ahlfors-David regular sets $E$ of positive codimension, the distance to an appropriate power is a Muckenhoupt $A_p$ weight. Precisely applying this property allows them to find applications to regularity of polyharmonic functions. 
Dur\'an et al. in \cite{DuranLopezGarcia_SolsOfDivAndStokes} prove analogous statements for solutions to the divergence and Stokes equations. 

An a priori different set of questions, namely the questions of which domains support Hardy-Sobolev inequalities, and properties such as self-improvement of those inequalities have been actively studied for quite some time (see e.g. \cite{KoskelaZhongHardyInequalityAndBoundarySize}, \cite{KoskelaLehrback_WeightedPointwiseHardy}, \cite{Lehrbäck2013dimension}, \cite{LehrbackWeightedHardyInequalitiesSizeOfBoundary}, \cite{LehrbackVahakangasInbetweenSobolevandHardy}, \cite{HurriSyrjanenVahakangas_FractionalSobolevPoincareAndFractionalHardyUnboundedJohnDomains}, \cite{ErikssonBiqueLehrbackVahakangas_SelfImprovementOfWeightedPointwiseInequalitiesOnOpenSets}, \cite{ErikssonBiqueKlineSelfImprovementOfFractionalHardyInequalities}, and the references therein) through various methods such as isoperimetric inequalities, Sobolev and Riesz capacity estimates, hyperbolic fillings, etc. 

However those questions are indeed related to the aforementioned main theme of this manuscript, since most recently the validity of Hardy-Sobolev inequalities for domains $\R^n \setminus E$ has been verified through conditions under which $w = \dist(\cdot, E)^{-\alpha}$ is an $A_p$ weight. A good reference is the excellent monograph \cite{KinnunenLehrbackVahakangasBook}. An important step was given by Lehrb\"ack and V\"ah\"akangas in \cite{LehrbackVahakangasInbetweenSobolevandHardy} (following the strategy in \cite{HoriuchiImbeddingTheoremsForWeightedSobolevSpaces2} and proving that Horiuchi's $P(s)$ condition is Assouad's codimension condition), where they prove weighted Hardy-Sobolev inequalities using that $w$ is an $A_1$ weight, whenever the set $E$ satisfies certain lower bounds on its Assouad codimension (which in particular imply that $E$ is porous).

Another important step was the manuscript by Dyda, Ihnatsyeva, Lehrb\"ack, Tuominen and V\"ah\"akangas (\cite{dyda2017muckenhoupt}), where they prove sharp conditions based on the Assouad codimension of a set $E$ (i.e. assuming a priori, as a side condition, that the set is porous) for $w = \dist(\cdot, E)^{-\alpha}$ to be an $A_p$ weight, and then use $A_p$ weighted embedding theorems to prove global Hardy-Sobolev inequalities. 

Finally, one particular case ($p=1$) of the porosity versus $A_p$ characterization problem was solved, in two elegant papers. Indeed, the sets $E \subset \R^n$ for which $w = \dist(\cdot, E)^{-\alpha}$ is an $A_1$ weight for some $\alpha$ were characterized by \cite{Vasin_LimitSetOfFuchsianGroupAndDynkinsLemma} in dimension $1$ (the boundary of the unit disk) and by Anderson, Lehrb\"ack, Mudarra and V\"ah\"akangas in \cite{anderson2022weakly} in $\R^n$. In the latter manuscript, the precise range of exponents $\alpha$ satisfying the $A_1$ property is found.

There have been some partial results regarding which exponents $\alpha$ satisfy that $w = \dist(\cdot, E)^{-\alpha}$ is an $A_p$ weight, but they all assume either porosity of the set $E$, or that $w = \dist(\cdot, E)^{-\alpha}$ is an $A_1$ weight for some $\alpha$. We will describe some of those in more detail momentarily. 

In this manuscript, we solve the $A_p$ (and the $A_\infty$) problems unconditionally, i.e. with no side conditions. We  characterize geometrically which sets $E \subset \R^n$ satisfy that $w = \dist(\cdot, E)^{-\alpha}$ is an $A_p$ weight for some $\alpha$ and some $p$. Further, given $1 < p \leq \infty$, we prove the precise range of exponents $\alpha$ for which $w = \dist(\cdot, E)^{-\alpha} \in A_p$ ($p=1$ was done in \cite{anderson2022weakly}). We further recover, with a different proof, the geometric characterization of sets for $p=1$ in \cite{anderson2022weakly}.

We then apply our results to ``break the porosity barrier" (to our knowledge, \emph{the first time in the literature} in this context). Indeed, we prove, at least in the critical case $q = p^{\ast} = \frac{np}{n-p}$, that domains much more general than porous ones, support weighted Hardy-Sobolev inequalities. (Porosity is the state-of-the-art weakest side condition in this context to our knowledge, until the present manuscript.) We also prove in the critical case matching necessary conditions for domains supporting weighted Hardy-Sobolev inequalities. However, our results regarding Hardy-Sobolev inequalities are not unconditional. They assume a (much weaker than porosity) side condition, which is what we call median porosity.

Analogously, we also ``break the porosity barrier" by obtaining corresponding distance weighted Poincar\'e inequalities for median porous sets.

Regarding the proof of the characterization of sets $E$ for which $w = \dist(\cdot, E)^{-\alpha}$ is an $A_p$ weight for some $\alpha$ and some $p$, the approach we use is to prove a pointwise domination formula for functions (improving previous versions of similar formulas by Lerner \cite{Lerner2010} and Hyt\"onen \cite{Hytonen2012}), which then allows us to find a new characterization of BMO, which we use crucially.

We further briefly discuss a further application of our methods to characterize which nonnegative H\"older continuous functions have their logarithm in BMO. 

As a byproduct of our arguments, we also prove that we have ``hit the limit" of both the $A_p$ method and the (very widely used in many contexts) Riesz potential method to prove weighted Hardy-Sobolev inequalities, in that both methods are sharp in our proofs.

For the sake of easy reference, we summarize, with pointers to the corresponding statements, what we feel are the main contributions of this manuscript, that we have just described.

\begin{enumerate}
\item \label{1} \cite{anderson2022weakly}  defined and characterized geometrically for $n>1$ the sets $E \subset \R^n$ for which there is an exponent $\alpha > 0$ such that $w = \dist(\cdot, E)^{-\alpha}$ is a Muckenhoupt $A_1$ weight (\cref{thm:weakly_porous_VIntrod}, restated also as \cref{thm:weakly_porous}). Such sets, defined (and characterized) in \cite{Vasin_LimitSetOfFuchsianGroupAndDynkinsLemma} for $n=1$, 
are called \emph{weakly porous}. We give a different proof (\cref{cor:OurProofOfWeakPorosityResult}) of the same result.

\item \label{2} We characterize geometrically the sets $E \subset \R^n$ for which there is an exponent $\alpha > 0$ such that $w = \dist(\cdot, E)^{-\alpha}$ is a Muckenhoupt $A_p$ weight for some $p$ (\cref{thm:BMO_char_dist}). Such sets are called \emph{median porous} (see \cref{def:DefVolumeQuantities} and Definition \ref{DefMedianPorousSets}). 

\item \label{3} \cite{anderson2022weakly} found (\cref{thm:A_p_weakly_porousVIntrod}, restated also as \cref{thm:A_p_weakly_porous}) the precise range of exponents $\alpha \neq 0$ for which $w = \dist(\cdot, E)^{-\alpha}$ is a Muckenhoupt $A_1$ weight, in terms of (what we call) the $1$-Muckenhoupt exponent $\textup{Mu}_1(E)$, which they define (\cref{def:AssouadAndMuckenhoupt1Exponent}). 

Given $1 < p \leq \infty$, we found (\cref{thm:Ap_range}, restated also as \cref{thm:Ap_range_2}) the precise range of exponents $\alpha \neq 0$ for which $w = \dist(\cdot, E)^{-\alpha}$ is a Muckenhoupt $A_p$ weight, in terms of the $p$-Muckenhoupt exponent $\textup{Mu}_p(E)$, which we define (\cref{def:muck_exponent_p}).

\item \label{4} The above item (\ref{3}) immediately improves various theorems on weighted Sobolev embeddings (\cite{HoriuchiImbeddingTheoremsForWeightedSobolevSpaces2}), regularity of solutions to PDEs (\cite{AimarCarenaDuranToschi_PowersOfDistancesToLowerDimensionalSetsAsMuckenhouptWeights}, \cite{DuranLopezGarcia_SolsOfDivAndStokes}), and distance weighted Poincar\'e inequalities (\cref{rem:WeightedPoincareInequalities}, first and third bullets, pertaining to Theorems 10.28 and 10.30 in \cite{KinnunenLehrbackVahakangasBook}), which contained hypotheses that implied that $w = \dist(\cdot, E)^{-\alpha} \in A_p$ (e.g. Ahlfors-David regularity, porosity in various flavours), since now we have a characterization for $w = \dist(\cdot, E)^{-\alpha} \in A_p$. 

In turn, since it is a characterization for $w = \dist(\cdot, E)^{-\alpha} \in A_p$, such theorems cannot be further improved with the same proof, we have reached ``the limit of the $A_p$ method".

\item \label{5} We prove the geometric characterization, in turn, via a new characterization of BMO in terms of medians (\cref{thm:BMO_char}). This new BMO characterization is reminiscent of (and improves on) classical results by John and Str\"omberg (\cref{thm:stromberg}).


\item \label{6} We prove the BMO characterization, in turn, via a new sparse domination formula by medians (\cref{thm:median_decomp}, restated also as \cref{thm:median_decomp2}), that improves on earlier versions of similar formulae by Lerner (\cref{thm:LernerSparseDominationFormula}) and Hyt\"onen (\cref{thm:local_decomp}). (See also \cref{rem:AllParametersInMedianFormula} and the discussion immediately above it.)

\item \label{7} Our methods work beyond distance functions. They provide a geometric characterization of the nonnegative H\"older continuous functions $w$ such that $\log (w) \in BMO$ (\cref{thm:HolderContinuousCase}).

\item \label{8} \cite{dyda2017muckenhoupt} proves, under an assumption that implies that the set $E$ is porous, that $\R^n \setminus E$ supports a Hardy-Sobolev inequality (\cref{thm:hardy_sobolev_old}). Actually, as far as we know, \emph{all previous results proving Hardy-Sobolev inequalities}, assume in way or another, that the set $E$ is porous.

In the critical case $q = p^{\ast} = \frac{np}{n-p}$, we apply our results to provide an improvement of \cref{thm:hardy_sobolev_old}, yielding \emph{the first time} in this context, as far as we know, that the ``porosity barrier" is broken. Indeed, we get an analogous statement to \cref{thm:hardy_sobolev_old}, but instead of assuming porosity, we assume the much weaker condition of median porosity (Theorem \ref{cor:hardy_sobolev} and Theorem \ref{FractionalAndFirstOrderHardySobolevInequalitiesViaRieszPotentials}, see also \cref{rem:ComparisonOfHypothesesOfSufficientConditionsForHardySobolevInCriticalCase}).

\item \label{9} With an analogous proof, we obtain further distance weighted Poincar\'e inequalities (\cref{rem:WeightedPoincareInequalities}, second bullet pertaining to Theorem 10.29 in \cite{KinnunenLehrbackVahakangasBook})

\item \label{10} \cite{LehrbackVahakangasInbetweenSobolevandHardy} proves, again under the assumption that the set $E$ is porous, that if $\R^n \setminus E$ supports a Hardy-Sobolev inequality, and $p < q \leq p^\ast = \frac{np}{n-p}$, then the Assouad codimension of $E$ satisfies the lower bound \cref{eqn:NecessaryConditionForHardySobolevWithBeta2SubcriticalCase} (matching one of the hypotheses implying porosity in \cite{dyda2017muckenhoupt} mentioned in item (\ref{8})).

In the critical case $q = p^{\ast} = \frac{np}{n-p}$, assuming the set $E$ is median porous, we prove (Theorem \ref{NecessaryConditionsForHardySobolevThm}) an improvement of \cref{eqn:NecessaryConditionForHardySobolevWithBeta2SubcriticalCase}, namely the analogous bound in the median porous world \cref{NecessaryConditionForHardySobolevWithBeta2}, matching one of the hypotheses we assume in item (\ref{8}).

\item \label{11} To prove the Hardy-Sobolev inequality mentioned in item (\ref{8}), we employ the (widely used) Riesz potential method, which is based on proving that the Riesz potential $\mathcal{I}_1$ is bounded from $L^p$ to $L^q$ (with appropriate weights). Interestingly, it turns out that in the sub-critical case $p \leq q < p^{\ast} = \frac{np}{n-p}$ we find that $\mathcal{I}_1$ is indeed bounded from $L^p$ to $L^q$ for appropriate porous sets (that was proved in \cite{dyda2017muckenhoupt}), but that it is \underline{\emph{not}} bounded for the corresponding median porous sets (\cref{rem:WeReachedTheLimitOfTheRieszPotentialMethod}). We have also reached ``the limit of the Riesz potential method".

\item \label{12} \cite{anderson2022weakly} provided a family of examples of what we call median porous sets which are not weakly porous (recall items (\ref{1}) and (\ref{2})), and a fortiori, not porous. We provide another family of examples of such sets (\cref{thm:example}).

\end{enumerate}

\subsection*{More detailed overview}

For a real-valued measurable function $f$ on a cube $Q$ and a constant $0 < s < 1$, let $M_s(f, Q)$ denote the upper $s$-median value of $f$ on $Q$. That is, $M_s(f, Q)$ is the largest value $\lambda$ satisfying
\[|\{x \in Q : f(x) < \lambda\}| \leq s|Q| \quad \text{and} \quad |\{x \in Q : f(x) > \lambda\}| \leq (1 - s)|Q|.\]
Several important spaces in harmonic analysis, such as $BMO$ and $BLO$ (definitions recalled in \cref{sec:prelim}) are defined for locally integrable functions $f$ in terms of their averages 
\[\ang{f}_Q := \dashint_{Q}f \, dx.\] 
In this paper, we find charaterizations of these spaces in terms of their median values $M_s(f, Q)$. One advantage of these characterizations is that they allow us to define $BMO$ and $BLO$ for functions that are not a priori locally integrable. \par
The first kind of median value characterizations was obtained by John \cite{john1962} where it was proved that if there are constants $0 \leq a < 1/2$, $\lambda > 0$, and $\{c_Q\}$ so that
\[|\{x \in Q : |f(x) - c_Q| > \lambda\}| < a|Q|\]
for all cubes $Q$, or equivalently, 
\[\sup_Q\inf_{c \in \C}M_{1 - a}(|f - c|, Q) < \infty,\]
then $f \in BMO$. This result was extended to the $a = 1/2$ case in \cite{stromberg1979} which is sharp. \par Several years later, by adapting John's argument in a very efficient way, Lerner proved a ``local median oscillation'' formula which describes the oscillations of a function in terms of its median values over a sparse family of cubes. The formula was later improved by Hytönen which has the following form:  given a function $f$, there exists a sparse family $\mc{S}$ of subcubes of $Q_0$ such that
\[|f(x) - M_s(f, Q_0)| \leq 2\sum_{Q \in \mc{S}}\inf_{c \in \C}M_{1-a}(|f - c|, Q)\chi_Q(x)\]
a.e. on $Q_0$ for $a = 2^{- n - 2}$ (\cite{Lerner2010} and \cite{Hytonen2012}). This result initiated the study of sparse domination which has been a very important tool in harmonic analysis for establishing sharp weighted norm inequalities. \par
In this paper, we establish an improvement of Lerner's formula. Our first theorem is
\begin{theorem}\label{thm:median_decomp}
    Let $f$ be a real-valued measurable function on a cube $Q_0$ and let $0 < s < t < 1$.  Then there is an $\eta$-sparse family $\mc{S} = \mc{S}(f, Q_0, s, t)$ of (not necessarly dyadic) subcubes of $Q_0$ so that
        \[|f(x) - M_s(f, Q_0)| \lesssim_{s, t, n} \sum_{Q \in \mc{S}}[M_t(f, Q) - M_s(f, Q)]\chi_Q(x)\]
    a.e. on $Q_0$ where $\eta$ depends only on $s,t$, and $n$. 
\end{theorem}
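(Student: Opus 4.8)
My plan is to mimic the Calderón–Zygmund-type stopping-time construction that underlies Lerner's formula, but with medians playing the role of averages throughout and with the oscillation term $M_t(f,Q) - M_s(f,Q)$ replacing $\inf_c M_{1-a}(|f-c|,Q)$. The starting observation is the elementary monotonicity and quasi-translation properties of medians: $M_s(f,Q)$ is nondecreasing in $s$, $M_s(f+c,Q) = M_s(f,Q) + c$, and $M_s$ is monotone under restriction to subcubes in the sense that if $Q' \subset Q$ with $|Q'| \geq \theta|Q|$ then $M_{s}(f,Q)$ and $M_{s}(f,Q')$ cannot differ by more than a controlled amount built from $M_{t}(f,Q) - M_{s}(f,Q)$ for suitable $t$. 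I would first record these as a short lemma, together with the ``splitting'' fact that for any $\lambda$, if $|\{x \in Q : f(x) > \lambda\}| > (1-s)|Q|$ then $\lambda < M_s(f,Q)$, and dually, which is what lets one compare a pointwise value to a median on a cube containing it.

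\smallskip

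\textbf{The stopping construction.} Fix a dyadic grid on $Q_0$. Starting from $Q_0$, I select the maximal dyadic subcubes $Q$ with
\[
|\{x \in Q : |f(x) - M_s(f,Q_0)| > M_t(f,Q_0) - M_s(f,Q_0)\}| > \eta_0 |Q|
\]
for a small dimensional $\eta_0$ (to be pinned down), iterate this inside each selected cube with $M_s(f,Q_0)$ replaced by $M_s(f,Q)$, and let $\mc{S}$ be the resulting collection. Maximality gives the sparseness of $\mc{S}$ with $\eta = 1 - \eta_0$ once $\eta_0$ is chosen small relative to $s,t,n$; the key input here is a weak-type estimate showing that the set where $|f - M_s(f,Q)|$ exceeds the oscillation $M_t(f,Q) - M_s(f,Q)$ has measure at most a fixed fraction of $|Q|$ for the non-selected scales — this is where the precise choice of $t > s$ is used (roughly, one needs the "excess mass" above the $t$-median and below the $s$-median to each be bounded by $\min(s, 1-t)|Q|$, which forces $\eta_0 < \min(s,1-t)$ up to the dyadic child factor $2^{-n}$).

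\smallskip

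\textbf{The pointwise bound.} For a.e.\ $x \in Q_0$, either $x$ lies in only finitely many cubes of $\mc{S}$, in which case I telescope: if $Q_0 \supsetneq Q_1 \supsetneq \cdots \supsetneq Q_N$ are the stopping cubes containing $x$ (with $Q_{N}$ the smallest, so $x$ is a Lebesgue point "not selected below $Q_N$"), then by the stopping condition on $Q_N$ the point $x$ satisfies $|f(x) - M_s(f,Q_N)| \leq M_t(f,Q_N) - M_s(f,Q_N)$ outside a null set, while each consecutive difference $|M_s(f,Q_{j}) - M_s(f,Q_{j-1})|$ is controlled by a bounded multiple of $M_t(f,Q_{j-1}) - M_s(f,Q_{j-1})$ using the monotonicity lemma (the parent of $Q_j$ is a non-selected dyadic cube inside $Q_{j-1}$, so its median is pinned near that of $Q_{j-1}$, and $Q_j$ is a fixed fraction of its parent). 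Summing the telescoped chain against $\chi_{Q_j}(x)$ yields exactly $\sum_{Q \in \mc{S}} [M_t(f,Q) - M_s(f,Q)]\chi_Q(x)$ up to the constant. The remaining case, $x$ in infinitely many cubes of $\mc{S}$, has measure zero: the nested stopping cubes shrink to $x$, and along them $M_s(f,Q) \to f(x)$ at a.e.\ point by a Lebesgue-differentiation argument for medians, so the inequality degenerates harmlessly (or, alternatively, the series on the right is already $+\infty$ there, up to a null set one can discard).

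\smallskip

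\textbf{Main obstacle.} I expect the technical heart — and the step most likely to require care — to be the monotonicity/comparison lemma for medians under passing to a subcube of comparable size, in the exact quantitative form needed to make the telescoping sum collapse to the oscillation at the \emph{parent} rather than at each intermediate scale. One has to be careful that the loss incurred at each step is genuinely bounded by $M_t - M_s$ at a single cube and not by a sum that reintroduces the full telescoping series; getting the parameters $s < t$ and $\eta_0$ to interlock correctly (so that a non-selected dyadic child still has its $s$- and $t$-medians trapped between those of its parent) is the delicate bookkeeping, and is presumably where the implicit constant's dependence on $s,t,n$ enters.
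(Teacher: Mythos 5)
There is a genuine gap, and it sits exactly where you flagged your ``main obstacle'': the comparison lemma you need --- that the $s$-median of a (non-selected) dyadic child is pinned near the medians of its parent, with error a bounded multiple of $M_t(f,\widehat{Q}) - M_s(f,\widehat{Q})$ --- is false for general $0 < s < t < 1$, and consequently your construction, which outputs a \emph{dyadic} family $\mc{S}$, cannot prove the theorem. Take $n=1$, $Q_0 = [0,1)$, $f = \chi_{[1/2,1)} - \chi_{[0,1/2)}$ and any $0 < s < t < 1/2$. Then $M_t(f,Q) - M_s(f,Q) = 0$ for every dyadic $Q \subset Q_0$ (either $f$ is constant on $Q$, or $Q = Q_0$ and both medians equal $-1$), so $\sum_{Q \in \mc{S}}[M_t(f,Q)-M_s(f,Q)]\chi_Q \equiv 0$ for \emph{any} dyadic family, sparse or not, while $|f - M_s(f,Q_0)| = 2$ on half of $Q_0$. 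In the same example $M_s(f,[1/2,1)) - M_s(f,[0,1)) = 2$ although $M_t(f,[0,1)) - M_s(f,[0,1)) = 0$, which kills precisely the telescoping step you rely on. The quantitative fact that \emph{is} true (the paper's Lemma 5.1) is: if $P \subset P'$ and $|P| > \frac{1-t}{1-s}|P'|$ then $M_s(f,P) \le M_t(f,P')$, with a dual statement under $|P| \ge \frac{s}{t}|P'|$; a dyadic child has $|Q| = 2^{-n}|\widehat{Q}|$, which violates these volume thresholds unless $t-s$ is large (the paper notes the dyadic route only works when $t - s \ge 1/2$).

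The paper's proof therefore runs in two stages. First, a dyadic stopping-time argument (done separately for upper and lower oscillations, with one-sided stopping conditions on the medians themselves) yields a dyadic sparse bound in which the summands are not $d_{s,t}(f,Q)$ but $d_{s,t}(f,Q)$ \emph{plus the parent--child jumps} $(M_s(f,Q)-M_s(f,\widehat{Q}))_+$ and $(M_t(f,\widehat{Q})-M_t(f,Q))_+$. Second --- and this is the idea missing from your plan --- each such jump is absorbed by interpolating between $Q$ and $\widehat{Q}$ with a chain of \emph{non-dyadic} cubes $Q \subset Q_1 \subset \cdots \subset Q_k = \widehat{Q}$ whose successive volume ratios all exceed $\max(\frac{1-t}{1-s},\frac{s}{t})$; the comparison lemma then telescopes the jump into $\sum_j [M_t(f,Q_j) - M_s(f,Q_j)]$, and these intermediate cubes are adjoined to the sparse family, with their Carleson constant checked layer by layer. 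This is why the family in the theorem is not dyadic, and any proof that stays inside a single dyadic grid is ruled out by the example above.
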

In particular, this gives us new median-value characterizations of $BMO$ and $BLO$:
\begin{theorem}\label{thm:BMO_char}
    Let $f$ be a real-valued measurable function on $\R^n$ and fix $0 \leq s < t < 1$.
    \begin{enumerate}
        \item  \label{thm:BMO_char_BMO} For $s > 0$, we have
        \[f \in BMO \quad \text{if and only if} \quad \|f\|_{s,t} := \sup_{Q}[M_t(f, Q) - M_s(f, Q)] < \infty.\]
        Moreover,
        \[\|f\|_{s, t} \lesssim_{s, t} \|f\|_{BMO} \lesssim_{s, t, n}\|f\|_{s,t}.\]
        \item  \label{thm:BMO_char_BLO} For $s = 0$, we have
        \[f \in BLO \quad \text{if and only if} \quad \|f\|_{0,t} := \sup_{Q}[M_t(f, Q) - \einf_Q f] < \infty.\]
        Moreover,
        \[\|f\|_{0, t} \lesssim_{t} \|f\|_{BLO} \lesssim_{t, n}\|f\|_{0,t}.\]
    \end{enumerate}
\end{theorem}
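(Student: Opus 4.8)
The plan is to prove each equivalence by the same two-sided scheme: the ``only if'' directions follow from Chebyshev's inequality on individual cubes, while the ``if'' directions are immediate consequences of the sparse median decomposition of \cref{thm:median_decomp} together with the disjointness built into sparse families. A short argument from the definition shows that for $0<r<1$ the value $M_r(f,Q)$ coincides with the largest $\lambda$ satisfying $|\{x\in Q:f(x)<\lambda\}|\le r|Q|$; in particular $M_r(f,Q)\ge\lambda$ whenever $|\{f<\lambda\}|\le r|Q|$, while $M_r(f,Q)\le\lambda$ whenever $|\{f>\lambda\}|<(1-r)|Q|$ (since then $|\{f<\mu\}|\ge|\{f\le\lambda\}|>r|Q|$ for every $\mu>\lambda$). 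These two facts are all I need.

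\emph{The ``only if'' directions.} Let $f\in BMO$ and fix a cube $Q$. From $\dashint_Q|f-\ang{f}_Q|\le\|f\|_{BMO}$ and Chebyshev's inequality, $|\{x\in Q:|f-\ang{f}_Q|>L\}|\le L^{-1}\|f\|_{BMO}|Q|$ for every $L>0$. Taking $L=s^{-1}\|f\|_{BMO}$ in the first fact above gives $M_s(f,Q)\ge\ang{f}_Q-s^{-1}\|f\|_{BMO}$, and $L=2(1-t)^{-1}\|f\|_{BMO}$ in the second gives $M_t(f,Q)\le\ang{f}_Q+2(1-t)^{-1}\|f\|_{BMO}$; subtracting and taking the supremum over $Q$ yields $\|f\|_{s,t}\le\bigl(s^{-1}+2(1-t)^{-1}\bigr)\|f\|_{BMO}$, a bound independent of the dimension, as asserted. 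The $BLO$ case is the identical computation with the nonnegative function $f-\einf_Q f$ in place of $f-\ang{f}_Q$, using $\dashint_Q(f-\einf_Q f)=\ang{f}_Q-\einf_Q f\le\|f\|_{BLO}$; it yields $M_t(f,Q)-\einf_Q f\le 2(1-t)^{-1}\|f\|_{BLO}$, hence $\|f\|_{0,t}\le 2(1-t)^{-1}\|f\|_{BLO}$.

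\emph{The ``if'' direction, $BMO$ case.} Assume $\|f\|_{s,t}<\infty$ and fix a cube $Q_0$. By \cref{thm:median_decomp} there are an $\eta$-sparse family $\mc{S}$ of subcubes of $Q_0$, with $\eta=\eta(s,t,n)$, and pairwise disjoint sets $E_Q\subseteq Q$ with $|E_Q|\ge\eta|Q|$, such that
\[|f(x)-M_s(f,Q_0)|\ \lesssim_{s,t,n}\ \sum_{Q\in\mc{S}}[M_t(f,Q)-M_s(f,Q)]\chi_Q(x)\ \le\ \|f\|_{s,t}\sum_{Q\in\mc{S}}\chi_Q(x)\]
a.e.\ on $Q_0$. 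Since $\sum_{Q\in\mc{S}}|Q|\le\eta^{-1}\sum_{Q\in\mc{S}}|E_Q|\le\eta^{-1}|Q_0|$, the function $\sum_{Q\in\mc{S}}\chi_Q$ lies in $L^1(Q_0)$; hence $f\in L^1(Q_0)$, so $f$ is locally integrable and $M_s(f,Q_0)\in\R$ --- it is exactly here that one avoids assuming integrability a priori. Averaging the displayed inequality over $Q_0$ gives $\dashint_{Q_0}|f-M_s(f,Q_0)|\lesssim_{s,t,n}\eta^{-1}\|f\|_{s,t}\lesssim_{s,t,n}\|f\|_{s,t}$, and since $\dashint_{Q_0}|f-\ang{f}_{Q_0}|\le 2\dashint_{Q_0}|f-M_s(f,Q_0)|$, taking the supremum over $Q_0$ yields $\|f\|_{BMO}\lesssim_{s,t,n}\|f\|_{s,t}$.

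\emph{The ``if'' direction, $BLO$ case, and the main obstacle.} Because \cref{thm:median_decomp} is stated for $s>0$, I instead run it with the pair $(t/2,t)$. Assume $\|f\|_{0,t}<\infty$; since $M_t(f,Q)$ is finite for real-valued $f$, this already forces $\einf_Q f>-\infty$ for every cube $Q$. Fixing $Q_0$, \cref{thm:median_decomp} produces an $\eta(t,n)$-sparse family $\mc{S}$ with $|f-M_{t/2}(f,Q_0)|\lesssim_{t,n}\sum_{Q\in\mc{S}}[M_t(f,Q)-M_{t/2}(f,Q)]\chi_Q$. As $\einf_Q f\le M_{t/2}(f,Q)\le M_t(f,Q)$ for every $Q$, each summand is at most $M_t(f,Q)-\einf_Q f\le\|f\|_{0,t}$, so the previous paragraph's argument gives $f\in L^1(Q_0)$ and $\dashint_{Q_0}(f-M_{t/2}(f,Q_0))\le\dashint_{Q_0}|f-M_{t/2}(f,Q_0)|\lesssim_{t,n}\|f\|_{0,t}$. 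Writing $\ang{f}_{Q_0}-\einf_{Q_0}f=\dashint_{Q_0}(f-M_{t/2}(f,Q_0))+(M_{t/2}(f,Q_0)-\einf_{Q_0}f)$ and bounding the last term by $M_t(f,Q_0)-\einf_{Q_0}f\le\|f\|_{0,t}$, the supremum over $Q_0$ yields $\|f\|_{BLO}\lesssim_{t,n}\|f\|_{0,t}$. None of this is deep once \cref{thm:median_decomp} is available; the two points needing care are (i) routing the ``only if'' estimates through Chebyshev rather than the John--Nirenberg inequality, so the constants stay dimension-free --- which also exposes why $s=0$ needs the $BLO$ scale instead of $BMO$, as the factor $s^{-1}$ blows up --- and (ii) the $(t/2,t)$ device, together with the monotonicity $M_{t/2}(f,Q)\le M_t(f,Q)$, which lets one trade a median for $\einf$ inside the decomposition.
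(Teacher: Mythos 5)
Your proof is correct and follows essentially the same route as the paper: the ``only if'' bounds come from a Chebyshev-type estimate on each cube (the paper packages this as Proposition~\subref{prop:medians}{prop:medians_8}, whose proof is the same computation), and the ``if'' bounds come from integrating the sparse decomposition of \cref{thm:median_decomp} over $Q_0$ and using the Carleson/sparse packing bound $\sum_{Q\in\mc{S}}|Q|\le\eta^{-1}|Q_0|$, with the $BLO$ case handled by running the decomposition at an auxiliary positive level (your $(t/2,t)$ versus the paper's generic $0<s_0<t$) and then trading $M_{s_0}$ for $\einf$ via monotonicity. Your explicit remark that the sparse bound already forces $f\in L^1_{\mathrm{loc}}$ is a nice point that the paper leaves implicit.
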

We remark that if $t - s > 1/2$, then the theorem follows easily from previous results in the literature. Therefore the importance of the theorem is that the characterization holds for \emph{all} $0 < s < t < 1$. Also, somewhat surprisingly, if $t - s < 1/2$, then the theorem fails in the dyadic setting (see \cref{sec:dyadic_example}). \par 
As an application of this characterization, we characterize certain natural integrability properties of distance functions. More specifically, let $E \subset \R^n$, and consider the distance function 
\[d_E(x) := \dist(x, E).\]
The (vaguely stated) question we will discuss is:
\begin{question}
    Given an analytic property $A$ of $\dist(\cdot, E)$, what is the corresponding geometric property $G$ of $E$ so that 
    \[\dist(\cdot, E) \text{ has property } A \text{ if and only if } E \text{ has property } G?\]
\end{question}
It turns out that certain properties of $E$ defined in terms of the holes of $E$ will correspond to integrability properties of $\dist(\cdot, E)$. We say that $E \subset \R^n$ is \emph{porous} if for every cube $Q_0$, there is some subcube $Q \subset Q_0$ with $Q \cap E = \emptyset$ and $|Q| \geq c|Q_0|$ where $0 < c < 1$ is some universal constant. It is well known that $E$ is porous if and only if there is some $\alpha > 0$ so that 
\[\dashint_{Q}\dist(x, E)^{-\alpha} \, dx \lesssim l(Q)^{-\alpha}\]
for all cubes $Q$ that intersect $E$ (for example see \cite{Luukkainen1998Dimension} and \cite{Lehrbäck2013dimension}). \par
The previous result was refined by defining a more general notion of porosity (weak porosity) and proving its equivalence to the corresponding integrability property (the Muckenhoupt $A_1$ property), first in \cite{Vasin_LimitSetOfFuchsianGroupAndDynkinsLemma} (for the unit circle) and then in \cite{anderson2022weakly} (for $\R^n$).
We say that the set $E \subset \R^n$ is \emph{weakly porous} if for every cube $Q_0$ that intersects $E$, there are dyadic subcubes $Q_1, ..., Q_k \subset Q_0$ so that the following three properties hold:
\begin{equation}\label{WeakPorosityDefIntrod}
Q_i \cap E = \emptyset, \quad |Q_i| \geq \delta \, \mc{V}_1(Q_0) , \quad \sum_{i = 1}^{k}|Q_i| \geq (1 - s)|Q_0|
\end{equation}
where $\mc{V}_1(Q_0)$ denotes the volume of a maximum dyadic subcube of $Q_0$ that doesn't intersect $E$ and $0 < s, \delta < 1$ are universal constants. It is clear that porous sets are weakly porous. Anderson, Lehrb\"ack, Mudarra and V\"ah\"akangas proved the following elegant theorem:
\begin{theorem}[\cite{anderson2022weakly}]\label{thm:weakly_porous_VIntrod}
    $E$ is weakly porous if and only if $\dist(\cdot, E)^{-\alpha} \in A_1$ for some $\alpha > 0$ (or equivalently $- \log \dist(\cdot, E) \in BLO$).
\end{theorem}

In other words, $E$ is weakly porous if and only if for all cubes $Q$,
\[\dashint_{Q}\dist(x, E)^{-\alpha} \, dx \lesssim \inf_{Q}\dist(x, E)^{-\alpha}.\]
\par 
Using the median value characterization we established for $BMO$ (\cref{thm:BMO_char}), we recover \cref{thm:weakly_porous_VIntrod} with a different proof (\cref{cor:OurProofOfWeakPorosityResult}), and we further refine the results \cite{anderson2022weakly} by proving a characterization of $BMO$ for distance functions. Indeed, we completely answer two questions left open in \cite{anderson2022weakly} (see \cref{thm:BMO_char_dist} and \cref{thm:Ap_range}).

\par For a constant $0 < s < 1$ and a cube $Q_0$ that intersects $E$, let $\mc{V}_s(Q_0)$ denote the largest value $\delta > 0$ so that we can fill a $(1 - s)$-th proportion of $Q_0$ with $E$-free dyadic cubes $Q$ with measure $|Q| \geq \delta$. I.e.
\[\mc{V}_s(Q_0) := \sup\left\{\delta > 0 : \exists Q_1, ..., Q_k \in \mc{D}(Q_0),\, Q_i \cap E = \emptyset, \, |Q_i| \geq \delta, \, \sum_{i}|Q_i| \geq (1 - s)|Q_0|\right\}.\]
We say that $E$ is a \emph{median porous set} (or $E$ has median porosity) if there are constants $0 < s < t \leq 1$ and $0 < \delta < 1$ so that for every cube $Q_0$ that intersects $E$, there are dyadic subcubes $Q_1, ..., Q_k \in \mc{D}(Q_0)$ so that the following three properties hold
\[Q_i \cap E = \emptyset, \quad |Q_i| \geq \delta \, \mc{V}_t(Q_0) , \quad \sum_{i = 1}^{k}|Q_i| \geq (1 - s)|Q_0|.\]
This almost looks like the definition of weak porosity given in \eqref{WeakPorosityDefIntrod}. The difference is that $\mc{V}_1(Q_0)$ is replaced by the smaller quantity $\mc{V}_t(Q_0)$. Observe that if $E$ has measure zero, then the median porosity condition can equivalently be written as 
\[\mc{V}_t(Q_0) \leq \delta^{-1}\mc{V}_s(Q_0).\]
Our characterization is
\begin{theorem}\label{thm:BMO_char_dist}
    Let $E$ be a nonempty subset of $\R^n$. Then $E$ is a median porous set if and only if $\dist(\cdot, E)^{-\alpha} \in A_\infty = \bigcup_{1 \leq p < \infty}A_p$ for some $\alpha > 0$ (or equivalently $\log \dist(\cdot, E) \in BMO$). I.e. for all cubes $Q$,
    \begin{equation}\label{ExpLogCharacterizationOfAInfty}
        \dashint_{Q} \dist(\cdot, E)^{-\alpha} \, dx \lesssim \exp\left(\dashint_Q \log \dist(\cdot, E)^{-\alpha} \, dx\right).
    \end{equation}

\end{theorem}

\begin{figure}[!ht]
\centering

\tikzset{every picture/.style={line width=0.75pt}} 

\tikzset{every picture/.style={line width=0.75pt}} 

\begin{tikzpicture}[x=0.75pt,y=0.75pt,yscale=-1,xscale=1]

\draw  [draw opacity=0] (251,88) -- (412,88) -- (412,249) -- (251,249) -- cycle ; \draw   (251,88) -- (251,249)(291,88) -- (291,249)(331,88) -- (331,249)(371,88) -- (371,249)(411,88) -- (411,249) ; \draw   (251,88) -- (412,88)(251,128) -- (412,128)(251,168) -- (412,168)(251,208) -- (412,208)(251,248) -- (412,248) ; \draw    ;
\draw   (50,88) -- (210,88) -- (210,248) -- (50,248) -- cycle ;
\draw [color={rgb, 255:red, 0; green, 0; blue, 0 }  ,draw opacity=1 ][line width=1.5]    (71,149) .. controls (125,188) and (99,133) .. (139,103) ;
\draw [line width=1.5]    (92,121) .. controls (137,161) and (152,151) .. (192,121) ;
\draw  [fill={rgb, 255:red, 155; green, 155; blue, 155 }  ,fill opacity=0.57 ] (251,208) -- (291,208) -- (291,248) -- (251,248) -- cycle ;
\draw  [fill={rgb, 255:red, 155; green, 155; blue, 155 }  ,fill opacity=0.57 ] (251,168) -- (291,168) -- (291,208) -- (251,208) -- cycle ;
\draw  [fill={rgb, 255:red, 155; green, 155; blue, 155 }  ,fill opacity=0.57 ] (371,168) -- (411,168) -- (411,208) -- (371,208) -- cycle ;
\draw  [fill={rgb, 255:red, 155; green, 155; blue, 155 }  ,fill opacity=0.57 ] (251,88) -- (291,88) -- (291,128) -- (251,128) -- cycle ;
\draw [color={rgb, 255:red, 0; green, 0; blue, 0 }  ,draw opacity=1 ][line width=1.5]    (272,149) .. controls (326,188) and (300,133) .. (340,103) ;
\draw [line width=1.5]    (301,216) .. controls (345,171) and (361,246) .. (401,216) ;
\draw [line width=1.5]    (293,121) .. controls (338,161) and (353,151) .. (393,121) ;
\draw   (450,88) -- (610,88) -- (610,248) -- (450,248) -- cycle ;
\draw [color={rgb, 255:red, 0; green, 0; blue, 0 }  ,draw opacity=1 ][line width=1.5]    (472,149) .. controls (526,188) and (500,133) .. (540,103) ;
\draw [line width=1.5]    (493,121) .. controls (538,161) and (553,151) .. (593,121) ;
\draw  [draw opacity=0] (450,88) -- (610,88) -- (610,248) -- (450,248) -- cycle ; \draw   (450,88) -- (450,248)(470,88) -- (470,248)(490,88) -- (490,248)(510,88) -- (510,248)(530,88) -- (530,248)(550,88) -- (550,248)(570,88) -- (570,248)(590,88) -- (590,248) ; \draw   (450,88) -- (610,88)(450,108) -- (610,108)(450,128) -- (610,128)(450,148) -- (610,148)(450,168) -- (610,168)(450,188) -- (610,188)(450,208) -- (610,208)(450,228) -- (610,228) ; \draw    ;
\draw [line width=1.5]    (100,216) .. controls (144,171) and (160,246) .. (200,216) ;
\draw [line width=1.5]    (501,216) .. controls (545,171) and (561,246) .. (601,216) ;
\draw  [fill={rgb, 255:red, 155; green, 155; blue, 155 }  ,fill opacity=0.57 ] (450,208) -- (470,208) -- (470,228) -- (450,228) -- cycle ;
\draw  [fill={rgb, 255:red, 155; green, 155; blue, 155 }  ,fill opacity=0.57 ] (450,228) -- (470,228) -- (470,248) -- (450,248) -- cycle ;
\draw  [fill={rgb, 255:red, 155; green, 155; blue, 155 }  ,fill opacity=0.57 ] (470,228) -- (490,228) -- (490,248) -- (470,248) -- cycle ;
\draw  [fill={rgb, 255:red, 155; green, 155; blue, 155 }  ,fill opacity=0.57 ] (470,208) -- (490,208) -- (490,228) -- (470,228) -- cycle ;
\draw  [fill={rgb, 255:red, 155; green, 155; blue, 155 }  ,fill opacity=0.57 ] (490,228) -- (510,228) -- (510,248) -- (490,248) -- cycle ;
\draw  [fill={rgb, 255:red, 155; green, 155; blue, 155 }  ,fill opacity=0.57 ] (510,228) -- (530,228) -- (530,248) -- (510,248) -- cycle ;
\draw  [fill={rgb, 255:red, 155; green, 155; blue, 155 }  ,fill opacity=0.57 ] (530,228) -- (550,228) -- (550,248) -- (530,248) -- cycle ;
\draw  [fill={rgb, 255:red, 155; green, 155; blue, 155 }  ,fill opacity=0.57 ] (550,228) -- (570,228) -- (570,248) -- (550,248) -- cycle ;
\draw  [fill={rgb, 255:red, 155; green, 155; blue, 155 }  ,fill opacity=0.57 ] (570,228) -- (590,228) -- (590,248) -- (570,248) -- cycle ;
\draw  [fill={rgb, 255:red, 155; green, 155; blue, 155 }  ,fill opacity=0.57 ] (590,228) -- (610,228) -- (610,248) -- (590,248) -- cycle ;
\draw  [fill={rgb, 255:red, 155; green, 155; blue, 155 }  ,fill opacity=0.57 ] (450,188) -- (470,188) -- (470,208) -- (450,208) -- cycle ;
\draw  [fill={rgb, 255:red, 155; green, 155; blue, 155 }  ,fill opacity=0.57 ] (470,188) -- (490,188) -- (490,208) -- (470,208) -- cycle ;
\draw  [fill={rgb, 255:red, 155; green, 155; blue, 155 }  ,fill opacity=0.57 ] (570,188) -- (590,188) -- (590,208) -- (570,208) -- cycle ;
\draw  [fill={rgb, 255:red, 155; green, 155; blue, 155 }  ,fill opacity=0.57 ] (590,188) -- (610,188) -- (610,208) -- (590,208) -- cycle ;
\draw  [fill={rgb, 255:red, 155; green, 155; blue, 155 }  ,fill opacity=0.57 ] (570,168) -- (590,168) -- (590,188) -- (570,188) -- cycle ;
\draw  [fill={rgb, 255:red, 155; green, 155; blue, 155 }  ,fill opacity=0.57 ] (550,168) -- (570,168) -- (570,188) -- (550,188) -- cycle ;
\draw  [fill={rgb, 255:red, 155; green, 155; blue, 155 }  ,fill opacity=0.57 ] (490,168) -- (510,168) -- (510,188) -- (490,188) -- cycle ;
\draw  [fill={rgb, 255:red, 155; green, 155; blue, 155 }  ,fill opacity=0.57 ] (530,168) -- (550,168) -- (550,188) -- (530,188) -- cycle ;
\draw  [fill={rgb, 255:red, 155; green, 155; blue, 155 }  ,fill opacity=0.57 ] (510,168) -- (530,168) -- (530,188) -- (510,188) -- cycle ;
\draw  [fill={rgb, 255:red, 155; green, 155; blue, 155 }  ,fill opacity=0.57 ] (590,168) -- (610,168) -- (610,188) -- (590,188) -- cycle ;
\draw  [fill={rgb, 255:red, 155; green, 155; blue, 155 }  ,fill opacity=0.57 ] (470,168) -- (490,168) -- (490,188) -- (470,188) -- cycle ;
\draw  [fill={rgb, 255:red, 155; green, 155; blue, 155 }  ,fill opacity=0.57 ] (450,168) -- (470,168) -- (470,188) -- (450,188) -- cycle ;
\draw  [fill={rgb, 255:red, 155; green, 155; blue, 155 }  ,fill opacity=0.57 ] (450,148) -- (470,148) -- (470,168) -- (450,168) -- cycle ;
\draw  [fill={rgb, 255:red, 155; green, 155; blue, 155 }  ,fill opacity=0.57 ] (530,148) -- (550,148) -- (550,168) -- (530,168) -- cycle ;
\draw  [fill={rgb, 255:red, 155; green, 155; blue, 155 }  ,fill opacity=0.57 ] (550,148) -- (570,148) -- (570,168) -- (550,168) -- cycle ;
\draw  [fill={rgb, 255:red, 155; green, 155; blue, 155 }  ,fill opacity=0.57 ] (570,148) -- (590,148) -- (590,168) -- (570,168) -- cycle ;
\draw  [fill={rgb, 255:red, 155; green, 155; blue, 155 }  ,fill opacity=0.57 ] (590,148) -- (610,148) -- (610,168) -- (590,168) -- cycle ;
\draw  [fill={rgb, 255:red, 155; green, 155; blue, 155 }  ,fill opacity=0.57 ] (470,128) -- (490,128) -- (490,148) -- (470,148) -- cycle ;
\draw  [fill={rgb, 255:red, 155; green, 155; blue, 155 }  ,fill opacity=0.57 ] (450,128) -- (470,128) -- (470,148) -- (450,148) -- cycle ;
\draw  [fill={rgb, 255:red, 155; green, 155; blue, 155 }  ,fill opacity=0.57 ] (450,108) -- (470,108) -- (470,128) -- (450,128) -- cycle ;
\draw  [fill={rgb, 255:red, 155; green, 155; blue, 155 }  ,fill opacity=0.57 ] (470,108) -- (490,108) -- (490,128) -- (470,128) -- cycle ;
\draw  [fill={rgb, 255:red, 155; green, 155; blue, 155 }  ,fill opacity=0.57 ] (450,88) -- (470,88) -- (470,108) -- (450,108) -- cycle ;
\draw  [fill={rgb, 255:red, 155; green, 155; blue, 155 }  ,fill opacity=0.57 ] (470,88) -- (490,88) -- (490,108) -- (470,108) -- cycle ;
\draw  [fill={rgb, 255:red, 155; green, 155; blue, 155 }  ,fill opacity=0.57 ] (490,88) -- (510,88) -- (510,108) -- (490,108) -- cycle ;
\draw  [fill={rgb, 255:red, 155; green, 155; blue, 155 }  ,fill opacity=0.57 ] (590,88) -- (610,88) -- (610,108) -- (590,108) -- cycle ;
\draw  [fill={rgb, 255:red, 155; green, 155; blue, 155 }  ,fill opacity=0.57 ] (570,88) -- (590,88) -- (590,108) -- (570,108) -- cycle ;
\draw  [fill={rgb, 255:red, 155; green, 155; blue, 155 }  ,fill opacity=0.57 ] (550,88) -- (570,88) -- (570,108) -- (550,108) -- cycle ;
\draw  [fill={rgb, 255:red, 155; green, 155; blue, 155 }  ,fill opacity=0.57 ] (550,108) -- (570,108) -- (570,128) -- (550,128) -- cycle ;
\draw  [fill={rgb, 255:red, 155; green, 155; blue, 155 }  ,fill opacity=0.57 ] (590,128) -- (610,128) -- (610,148) -- (590,148) -- cycle ;
\draw  [fill={rgb, 255:red, 155; green, 155; blue, 155 }  ,fill opacity=0.57 ] (510,88) -- (530,88) -- (530,108) -- (510,108) -- cycle ;

\draw (52,91) node [anchor=north west][inner sep=0.75pt]   [align=left] {$\displaystyle Q_{0}$};
\draw (146,171) node [anchor=north west][inner sep=0.75pt]   [align=left] {$\displaystyle E$};

\end{tikzpicture}
\caption{A set $E \subset Q_0$ and the procedure to compute $\mc{V}_{s}(Q_0)$ for $s = \frac{3}{4}$ and $s = \frac{3}{8}$. The grey cubes are $E$-free dyadic subcubes of $Q_0$. Therefore $\mc{V}_{\frac{3}{4}}(Q_0) = \frac{1}{16}|Q_0|$ and $\mc{V}_{\frac{3}{8}}(Q_0) = \frac{1}{64}|Q_0|$.}
\label{fig:volume_def}
\end{figure}
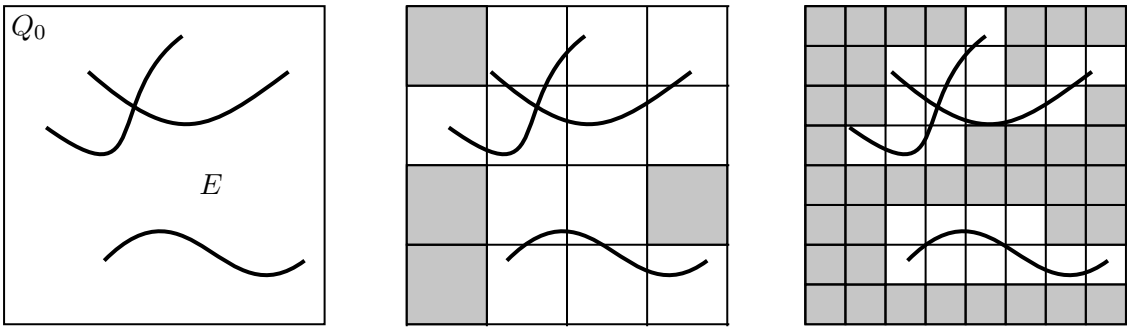

The key step in applying \cref{thm:BMO_char} to prove \cref{thm:BMO_char_dist} is a general statement relating the volume quantities $\mc{V}_s(Q_0)$ with the median values of the distance function (see \cref{prop:median_dist}). This proposition also allows us to interpret the $BLO$ statement of \cref{thm:BMO_char} as a generalization of the ``weakly porous if and only if $A_1$'' equivalence of \cite{anderson2022weakly} to \emph{any} measurable function and not only distance functions. For some concrete applications of this interpretation, see \cref{cor:OurProofOfWeakPorosityResult} for our own (different) proof of the ``weakly porous if and only if $A_1$'' equivalence of \cite{anderson2022weakly} (which generalizes to the BMO case in our proof of \cref{thm:BMO_char_dist}), and \cref{thm:HolderContinuousCase} for an application to H\"older continuous functions.
\par 

It follows from some basic facts about $A_p$ and $A_\infty$ (Proposition $\subref{prop:prelim_BMO_Ap}{prelim_BMO_Ap_4}$ and  Proposition $\subref{prop:prelim_BMO_Ap}{prelim_BMO_Ap_2}$)  that \cref{thm:BMO_char_dist} answers the question: 
\begin{question}\label{que:which_sets}
    \begin{center}
    Given $1 < p \leq \infty$, for which sets $E$ is $\dist(\cdot, E)^{-\alpha} \in A_p$ for some $\alpha > 0$?
\end{center}
\end{question}
Now given a set $E$, one could also ask the question (which already appears in \cite{anderson2022weakly}): 
\begin{question}\label{que:Ap_range}
    \begin{center}
        Given $1 < p \leq \infty$, for which values $\alpha \neq 0$ is $\dist(\cdot, E)^{-\alpha} \in A_p$?
\end{center}
\end{question}
This question has been given partial answers:
\begin{theorem}[\cite{dyda2017muckenhoupt}]
    Let $E$ be a nonempty subset of $\R^n$, $\alpha \neq 0$, and $1 < p < \infty$. Also assume that $E$ is porous. Then
    \begin{enumerate}
        \item $\dist(\cdot, E)^{-\alpha} \in A_1$ if and only if $0 \leq \alpha < \underline{\textup{co dim}}_A(E)$.
        \item $\dist(\cdot, E)^{-\alpha} \in A_p$ if and only if $-(p - 1)\underline{\textup{co dim}}_A(E) < \alpha < \underline{\textup{co dim}}_A(E)$
    \end{enumerate} 
\end{theorem}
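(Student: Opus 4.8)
The plan is to reduce both statements to a single sharp threshold for the size of the averages $\dashint_Q \dist(x,E)^{-\beta}\,dx$ over cubes meeting $E$, and then to verify the $A_1$ and $A_p$ conditions one cube at a time, splitting cubes according to how far they lie from $E$. Write $w = \dist(\cdot,E)^{-\alpha}$ and $c = \underline{\textup{co dim}}_A(E)$, and recall that porosity of $E$ forces $c > 0$ (and $|E| = 0$). I would first record two elementary consequences of porosity, valid for \emph{every} cube $Q$: inserting an $E$-free subcube of side $\gtrsim l(Q)$ into $Q$ gives
\[\esup_Q \dist(\cdot,E) \gtrsim l(Q) \qquad\text{and}\qquad \dashint_Q \dist(x,E)^{\gamma}\,dx \gtrsim l(Q)^{\gamma}\quad(\gamma > 0),\]
while the triangle inequality gives $\esup_Q \dist(\cdot,E) \lesssim l(Q) + \dist(c_Q,E)$, so that $\esup_Q \dist(\cdot,E) \approx l(Q)$ whenever $\dist(c_Q,E) \lesssim l(Q)$.

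The main analytic input I would isolate as a lemma: for every $\beta \in \R$ and every cube $Q$ meeting $E$,
\[\dashint_Q \dist(x,E)^{-\beta}\,dx \lesssim l(Q)^{-\beta} \quad\text{if and only if}\quad \beta < c,\]
with the left-hand bound automatic when $\beta \le 0$, since then $\dist(\cdot,E)^{-\beta} = \dist(\cdot,E)^{|\beta|} \lesssim l(Q)^{|\beta|}$ on such $Q$. This is the Aikawa/Assouad-codimension characterization of integrability of distance powers. To prove it, replace $Q$ by a ball $B = B(x_0,R)$ of comparable radius centered at a point $x_0 \in E \cap Q$, rewrite $\dashint_B \dist(x,E)^{-\beta}\,dx = \tfrac{\beta}{|B|}\int_0^\infty |\{y \in B : \dist(y,E) < r\}|\,r^{-\beta-1}\,dr$, split at $r = R$, bound the outer piece by $R^{-\beta}$ and the inner piece through the neighborhood estimate $|\{y \in B : \dist(y,E) < r\}| \lesssim (r/R)^{q}|B|$ available for any $q < c$; summing the convergent integral gives the forward implication, and the converse follows by testing on a sequence of balls where the neighborhood estimate fails at exponent $\beta$. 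I expect this lemma, and in particular its ``only if'' half, to be the main obstacle, since it requires extracting from the definition of $\underline{\textup{co dim}}_A(E)$ an explicit sequence of balls centered on $E$ on which $\dist(\cdot,E)^{-\beta}$ has an abnormally large average; it is here, through the neighborhood estimate, that porosity does the real work.

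Granting the lemma, part (1) is a two-case check of $\dashint_Q w \lesssim \einf_Q w$. If $\dist(c_Q,E) \gtrsim l(Q)$ with a large implicit constant, then $\dist(\cdot,E)$ varies by a bounded factor on $Q$, so $w$ is essentially constant there and the inequality is trivial. Otherwise $\dist(c_Q,E) \lesssim l(Q)$, so $\einf_Q w = (\esup_Q \dist(\cdot,E))^{-\alpha} \gtrsim l(Q)^{-\alpha}$ (using $\alpha \ge 0$ and $\esup_Q \dist(\cdot,E) \lesssim l(Q)$), while applying the lemma to a cube $\approx Q$ that meets $E$ and contains $Q$ gives $\dashint_Q w \lesssim l(Q)^{-\alpha}$ precisely when $\alpha < c$; combining these yields $w \in A_1$. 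For the converse, if $\alpha \ge c$ the lemma produces cubes meeting $E$ along which $\dashint_Q w / l(Q)^{-\alpha} \to \infty$ while $\einf_Q w \approx l(Q)^{-\alpha}$, so $A_1$ fails; and $\alpha < 0$ is impossible, since a cube centered at a point of $E$ has $\einf_Q w = 0 < \dashint_Q w$.

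Part (2) runs identically for the $A_p$ functional $\big(\dashint_Q w\big)\big(\dashint_Q w^{-1/(p-1)}\big)^{p-1}$, now with $w^{-1/(p-1)} = \dist(\cdot,E)^{\alpha/(p-1)} = \dist(\cdot,E)^{-\beta}$ where $\beta = -\alpha/(p-1)$. Cubes with $\dist(c_Q,E) \gtrsim l(Q)$ give a bounded functional since $w$ is essentially constant. For a cube $Q$ meeting $E$, the lemma applied at exponent $\alpha$ and at exponent $\beta$ bounds the two averages by $l(Q)^{-\alpha}$ and $l(Q)^{-\beta} = l(Q)^{\alpha/(p-1)}$ respectively --- the first requiring $\alpha < c$, the second $\beta < c$, i.e.\ $\alpha > -(p-1)c$ --- and their product is $l(Q)^{-\alpha}\big(l(Q)^{\alpha/(p-1)}\big)^{p-1} = 1$, so $w \in A_p$ throughout the stated range. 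Conversely, if $\alpha \ge c$ (resp.\ $\alpha \le -(p-1)c$) the lemma produces cubes meeting $E$ on which $\dashint_Q w / l(Q)^{-\alpha} \to \infty$ (resp.\ $\dashint_Q w^{-1/(p-1)} / l(Q)^{\alpha/(p-1)} \to \infty$), while the porosity lower bound $\dashint_Q \dist(x,E)^{\gamma}\,dx \gtrsim l(Q)^{\gamma}$ keeps the remaining factor $\gtrsim$ its expected power of $l(Q)$; hence the functional is unbounded and $w \notin A_p$.
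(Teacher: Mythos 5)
The theorem you are proving is only cited in this paper (from \cite{dyda2017muckenhoupt}); the paper does not supply a proof of it. What the paper \emph{does} prove is the generalization \cref{thm:Ap_range}, where $\underline{\textup{co dim}}_A(E)$ is replaced by the $\textup{Mu}_p$ exponents built from $\mc{L}_s(Q)$ rather than $l(Q)$ (which coincide with the Assouad codimension precisely when $E$ is porous, cf.\ \cref{rmk:muck_codim}). Your strategy --- a two-sided average estimate $\dashint_Q \dist(\cdot,E)^{-\beta}\,dx \lesssim l(Q)^{-\beta}$ proved via the distribution function / layer-cake formula, a dichotomy between cubes near and far from $E$, and then a term-by-term verification of the $A_1$ and $A_p$ conditions --- is essentially the same skeleton as the paper's proof of \cref{thm:Ap_range}, so the overall approach is sound.

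There is, however, one genuine gap in the necessity direction: your lemma is stated as an equivalence with strict inequality ($\lesssim l(Q)^{-\beta}$ iff $\beta < c$), but your proof of the ``only if'' half only covers $\beta > c$. By definition $c = \underline{\textup{co dim}}_A(E)$ is a \emph{supremum}, so for $\beta > c$ the neighborhood estimate fails and you can extract a sequence of balls centered on $E$ with $|E_{r_k}\cap B_k|/|B_k| \geq k(r_k/R_k)^{\beta}$, making the average blow up --- that part is fine. At $\beta = c$ exactly, the neighborhood estimate may or may not hold, and even when it does, your layer-cake integral $\int_0^R r^{q-\beta-1}\,dr$ requires $q > \beta$ to converge, so you cannot conclude either way at the endpoint from the raw definition. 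Consequently your argument shows $\alpha < c$ is sufficient and $\alpha > c$ is disqualifying, but does not by itself rule out $\alpha = c$. The standard fix, which the paper uses explicitly in the proof of \cref{thm:Ap_range}, is the open-endedness (self-improvement) of $A_p$: if $\dist(\cdot,E)^{-\alpha}\in A_p$ then $\dist(\cdot,E)^{-(1+\epsilon)\alpha}\in A_p$ for some $\epsilon>0$, so the ``sufficient'' direction applied to $(1+\epsilon)\alpha$ forces $(1+\epsilon)\alpha \le c$, hence $\alpha < c$ strictly. You should insert this one step to close the endpoint case; everything else in your argument is correct.
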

Here $\underline{\textup{co dim}}_A(E)$ denotes the Assouad dimension of $E$ (see \cref{sec:Ap_range} for the definition). This theorem gives an answer to \cref{que:Ap_range} under the assumption that $E$ is porous. Recall that this assumption implies $E$ is weakly porous, which in turn by  \cref{thm:weakly_porous_VIntrod}, is equivalent to the existence of some $\beta > 0$ such that $\dist(\cdot, E)^{-\beta} \in A_1$. This result was improved in \cite{anderson2022weakly} where the precise $\alpha$ were determined if $E$ is weakly porous. In fact, they gave a full solution to the $A_1$ question and a partial answer to the $A_p$ question (because it is under the assumption of weak porosity):
\begin{theorem}[\cite{anderson2022weakly}]\label{thm:A_p_weakly_porousVIntrod}
    Let $E$ be a nonempty subset of $\R^n$, $\alpha \neq 0$, and $1 < p < \infty$. Then
    \begin{enumerate}
        \item $\dist(\cdot, E)^{-\alpha} \in A_1$ if and only if $0 < \alpha < \textup{Mu}_1(E)$.
        \item If $E$ is weakly porous then $\dist(\cdot, E)^{-\alpha} \in A_p$ if and only if $-(p - 1)\textup{Mu}_1(E) < \alpha < \textup{Mu}_1(E)$
    \end{enumerate} 
\end{theorem}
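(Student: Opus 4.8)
The plan is to regard the Muckenhoupt exponent as the critical power for the $A_1$ condition, $\textup{Mu}(E):=\sup\{\alpha>0:d_E^{-\alpha}\in A_1\}$ with $d_E:=\dist(\cdot,E)$, and then to read off (1) from the structural theory of $A_1$ weights and deduce (2) from (1) by factorization, $A_p$--duality, and one genuinely geometric input carrying the weak porosity hypothesis. (If one starts instead from the hole-filling definition of $\textup{Mu}(E)$ in \cite{anderson2022weakly}, the extra work of identifying it with the supremum above is exactly where a quantitative form of \cref{prop:median_dist} enters, matching the decay rate of $\mc{V}_s(Q)$ along chains of cubes with the growth of the $A_1$--constant of $d_E^{-\alpha}$ as $\alpha\uparrow$; I do not pursue this here.) Recall that $\textup{Mu}(E)>0$ if and only if $E$ is weakly porous — the equivalence ``$E$ weakly porous $\iff d_E^{-\alpha}\in A_1$ for some $\alpha>0$'' of \cite{anderson2022weakly}, which, as noted after \cref{thm:BMO_char_dist}, also follows from the $BLO$ half of \cref{thm:BMO_char} together with \cref{prop:median_dist}.

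For (1), let $S=\{\alpha>0:d_E^{-\alpha}\in A_1\}$. The set $S$ is downward closed, because $A_1$ is stable under powers in $(0,1]$: if $w\in A_1$ and $0<\theta\le1$ then Jensen's inequality gives $\dashint_Q w^{\theta}\le\bigl(\dashint_Q w\bigr)^{\theta}\le C^{\theta}\bigl(\einf_Q w\bigr)^{\theta}=C^{\theta}\einf_Q w^{\theta}$, so $w^{\theta}\in A_1$. And $S$ has no maximum, because an $A_1$ weight satisfies a reverse H\"older inequality, so $d_E^{-\alpha}\in A_1$ forces $d_E^{-\alpha(1+\varepsilon)}\in A_1$ for some $\varepsilon>0$. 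A downward-closed subset of $(0,\infty)$ with no maximum equals $(0,\sup S)$, i.e.\ $S=(0,\textup{Mu}(E))$. Finally $\alpha<0$ gives no $A_1$ weight either: on cubes shrinking to a point of $E$ one has $\einf d_E^{|\alpha|}=0<\dashint d_E^{|\alpha|}$. This proves (1).

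For (2), assume $E$ weakly porous. The geometric input is that, for every cube $Q$ and every $\beta>0$,
\begin{equation*}
\dashint_Q d_E^{\beta}\,dx\;\gtrsim_{\beta,n,E}\;\bigl(\esup_Q d_E\bigr)^{\beta}.\tag{$\ast$}
\end{equation*}
I would prove $(\ast)$ by cases. If $Q$ meets $E$, weak porosity supplies pairwise disjoint $E$-free dyadic subcubes of $Q$ of side length $\approx\esup_Q d_E$ — using the elementary geometric fact $\esup_Q d_E\approx\mc{V}_1(Q)^{1/n}$ — of total measure $\ge(1-s)|Q|$; on their concentric halves $d_E\gtrsim\esup_Q d_E$, so integrating there gives $(\ast)$. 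If $Q$ is disjoint from $E$, then $d_E\gtrsim\esup_Q d_E$ already on the concentric half-cube, again giving $(\ast)$. Equivalently, $(\ast)$ combines the trivial bound $\dashint_Q d_E^{\beta}\ge(1-s)\,M_s(d_E,Q)^{\beta}$ with $M_s(d_E,Q)\gtrsim\esup_Q d_E$, the latter being the content of \cref{prop:median_dist} for weakly porous $E$. Granting $(\ast)$, the ``if'' direction of (2) is immediate: for $0\le\alpha<\textup{Mu}(E)$ we have $d_E^{-\alpha}\in A_1\subset A_p$ by (1); for $-(p-1)\textup{Mu}(E)<\alpha<0$, the weight $v:=d_E^{-|\alpha|/(p-1)}$ lies in $A_1$ by (1) (since $|\alpha|/(p-1)<\textup{Mu}(E)$), and $d_E^{-\alpha}=v^{1-p}\in A_p$ by the elementary direction of the Jones factorization theorem — a short H\"older estimate using that an $A_1$ weight is pointwise $\gtrsim$ its average on any cube.

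For the ``only if'' direction of (2), suppose $d_E^{-\alpha}\in A_p$. If $\alpha>0$, the $A_p$ condition reads $\dashint_Q d_E^{-\alpha}\bigl(\dashint_Q d_E^{\alpha/(p-1)}\bigr)^{p-1}\le C$, and $(\ast)$ with $\beta=\alpha/(p-1)$ bounds the second factor below by $c\,(\esup_Q d_E)^{\alpha}$; hence $\dashint_Q d_E^{-\alpha}\le C'(\esup_Q d_E)^{-\alpha}=C'\einf_Q d_E^{-\alpha}$, so $d_E^{-\alpha}\in A_1$ and therefore $\alpha<\textup{Mu}(E)$ by (1), while $\alpha>-(p-1)\textup{Mu}(E)$ is trivial. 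If $\alpha<0$, then $\alpha<\textup{Mu}(E)$ is trivial, and $A_p$--duality gives $d_E^{\alpha/(p-1)}=d_E^{-|\alpha|/(p-1)}\in A_{p'}$; applying the case just treated with $p'$ in place of $p$ yields $|\alpha|/(p-1)<\textup{Mu}(E)$, i.e.\ $\alpha>-(p-1)\textup{Mu}(E)$. This proves (2). I expect the main obstacle to be $(\ast)$: it is the only place where weak porosity is used essentially — for ``fat'' sets the conclusion of (2) genuinely fails — and it records exactly that a weakly porous set keeps a definite proportion of every cube $E$-free at the top scale, i.e.\ \cref{prop:median_dist}. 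Everything else is standard weight theory: power-stability and reverse H\"older for $A_1$, $A_p$--duality, and the trivial half of Jones factorization.
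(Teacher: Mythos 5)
There is a genuine gap, and it sits exactly where you placed your parenthetical disclaimer. The theorem's $\textup{Mu}(E)$ is the \emph{geometric} Muckenhoupt exponent: the supremum of $\alpha\ge0$ such that $|E_r\cap Q|/|Q|\le C\bigl(\mc{L}_1(Q)/r\bigr)^{-\alpha}$ for all cubes $Q$ centered on $E$ and all $0<r<\mc{L}_1(Q)$. By instead \emph{defining} $\textup{Mu}(E):=\sup\{\alpha>0:d_E^{-\alpha}\in A_1\}$, your part (1) reduces to the standard facts that this set of exponents is downward closed (Jensen) and open at the top (reverse H\"older/self-improvement of $A_1$) --- which is correct but proves only that the set equals $(0,c)$ for some $c$, not that $c$ equals the geometric exponent. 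That identification is the entire content of part (1), and it is precisely the two-sided bridge between measure decay of the neighborhoods $E_r\cap Q$ and integrability of powers of $d_E$: one direction is the pointwise bound $\dashint_Q d_E^{-\alpha}\ge r^{-\alpha}|E_r\cap Q|/|Q|$ combined with the $A_1$ bound $\einf_Q d_E^{-\alpha}\approx\mc{L}_1(Q)^{-\alpha}$ (which needs \cref{prop:median_dist} plus weak porosity to pass from $\mc{L}_s$ to $\mc{L}_1$); the other is the dyadic-annulus summation $\int_Q d_E^{-\alpha}\lesssim\sum_{2^j\le\mc{L}_1(Q)}|E_{2^j}\cap Q|\,2^{-j\alpha}+|Q|\mc{L}_1(Q)^{-\alpha}$, which converges geometrically exactly when $\alpha$ is strictly below the geometric exponent. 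Neither computation appears in your proposal, and the paper's proof of the generalization (\cref{thm:Ap_range}) consists essentially of these two computations. As written, your argument for (1) is circular relative to the stated theorem.

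Granting (1), your part (2) is correct, and it takes a cleaner route than the direct approach: your inequality $(\ast)$ is valid for weakly porous $E$ (and is indeed equivalent to the $p>0$ case of \cref{prop:median_dist} together with $\mc{L}_s(Q)\approx\mc{L}_1(Q)\approx\esup_Q d_E$), and using it to lower-bound the dual factor in the $A_p$ condition shows that for weakly porous $E$ and $\alpha>0$, $d_E^{-\alpha}\in A_p$ already forces $d_E^{-\alpha}\in A_1$; combined with duality and the easy half of Jones factorization this gives (2) entirely as a corollary of (1). The paper instead proves the $A_p$ range directly from the two-sided geometric decay conditions defining $\textup{Mu}_p(E)$. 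Your reduction buys economy but only after (1) has been established in its geometric form, so the missing identification must be supplied before the argument is complete.
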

Here  $\text{Mu}_1(E)$, originally defined in \cite{anderson2022weakly} and denoted there as $\text{Mu}(E)$, denotes the ``Muckenhoupt exponent'' (see \cref{sec:Ap_range} for the relevant definitions). Motivated by our qualitative characterization, we are able to fully answer (i.e. with no side conditions) \cref{que:Ap_range} for all $1 < p \leq \infty$.
\begin{theorem}\label{thm:Ap_range}
    Let $E$ be a nonempty subset of $\R^n$, $\alpha \neq 0$, and $1 < p < \infty$. Then
    \begin{enumerate}
        \item $\dist(\cdot, E)^{-\alpha} \in A_p$ if and only if $-(p - 1)\textup{Mu}_{p'}(E) < \alpha < \textup{Mu}_p(E)$.
        \item $\dist(\cdot, E)^{-\alpha} \in A_\infty$ if and only if $-\sup_{p > 1}(p - 1)\textup{Mu}_{p'}(E) < \alpha < \textup{Mu}_\infty(E)$.
    \end{enumerate}
\end{theorem}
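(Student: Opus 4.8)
The plan is to derive the whole statement from a single quantitative fact about \emph{positive} exponents: for $1<p<\infty$ and $\alpha>0$ one has $d_E^{-\alpha}\in A_p$ if and only if $\alpha<\textup{Mu}_p(E)$, the behaviour at the endpoint $\alpha=\textup{Mu}_p(E)$ being dictated by whether the supremum defining $\textup{Mu}_p(E)$ in \cref{sec:Ap_range} is attained. Everything else is then formal. For $\alpha<0$ one invokes the duality $w\in A_p\Leftrightarrow w^{-1/(p-1)}\in A_{p'}$ with $p'=p/(p-1)$: since $(d_E^{-\alpha})^{-1/(p-1)}=d_E^{-\beta}$ for $\beta:=-\alpha/(p-1)>0$, membership $d_E^{-\alpha}\in A_p$ is equivalent to $d_E^{-\beta}\in A_{p'}$, hence to $\beta<\textup{Mu}_{p'}(E)$, hence to $-(p-1)\textup{Mu}_{p'}(E)<\alpha$; combined with the trivial $\alpha<0<\textup{Mu}_p(E)$ this is exactly the range in part~(1). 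Part~(2) then follows from $A_\infty=\bigcup_{1<p<\infty}A_p$ together with $A_p\subset A_q$ for $p\le q$: for $\alpha>0$, $d_E^{-\alpha}\in A_\infty$ iff $\alpha<\sup_{p>1}\textup{Mu}_p(E)=:\textup{Mu}_\infty(E)$, and for $\alpha<0$, iff $-\alpha<\sup_{p>1}(p-1)\textup{Mu}_{p'}(E)$; the ``$\log\dist(\cdot,E)\in BMO$'' reformulation is \cref{thm:BMO_char_dist}.

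So fix $1<p<\infty$, $\alpha>0$, and set $w:=d_E^{-\alpha}$. By the standard localization for distance weights it suffices to test $[w]_{A_p}=\sup_Q\ang{w}_Q\ang{w^{-1/(p-1)}}_Q^{p-1}$ on cubes $Q_0$ that meet $E$: if $\dist(Q,E)\gg\ell(Q)$ then $d_E$ oscillates negligibly on $Q$ and the local $A_p$ ratio is $\lesssim1$, while otherwise $Q$ lies in a bounded dilate of itself that meets $E$ and has comparable measure. Normalising $\ell(Q_0)=1$, so that $0<d_E\le\sqrt n$ a.e.\ on $Q_0$ (with $d_E>0$ a.e.\ exactly when $|\overline{E}|=0$ --- otherwise $w$ or $w^{-1/(p-1)}$ equals $+\infty$ on a set of positive measure, no exponent works, and every $\textup{Mu}$ in the statement is $0$), a dyadic layer-cake splitting $Q_0=\bigsqcup_{k\ge0}\{\,2^{-k-1}<d_E\le2^{-k}\,\}$, with $\mu_k$ the normalised measure of the $k$-th layer, gives, up to constants depending only on $p,\alpha,n$,
\[
\ang{w}_{Q_0}\approx\sum_{k\ge0}2^{k\alpha}\mu_k,\qquad \ang{w^{-1/(p-1)}}_{Q_0}\approx\sum_{k\ge0}2^{-k\alpha/(p-1)}\mu_k.
\]
By \cref{prop:median_dist} the sequence $(\mu_k)$ --- more precisely the levels $2^{-k}$ at which its partial sums first cross a threshold $s\in(0,1)$ --- is comparable to the family $\{\mc{V}_{s'}(Q_0)^{1/n}\}$ of volume quantities (equivalently to the medians $M_s(d_E,Q_0)$). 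Therefore $[w]_{A_p}<\infty$ is equivalent to a uniform (over cubes $Q_0$ meeting $E$) $A_p$-type inequality relating the two geometric series above, and rewriting that inequality in terms of the $\mc{V}_s(Q_0)$ yields precisely the condition $\alpha<\textup{Mu}_p(E)$ with $\textup{Mu}_p(E)$ as defined in \cref{sec:Ap_range}. (In the formal $A_1$ limit $p\to1^+$ one has $\alpha/(p-1)\to\infty$, only $\ang{w}_{Q_0}\lesssim1$ survives, and one recovers the Muckenhoupt exponent $\textup{Mu}(E)$ of \cite{anderson2022weakly}; the two $A_\infty$ exponents come out of the $p\to\infty$ limit.)

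The crux is the word ``equivalent'' above: one must show that the layer-cake comparison and the passage between $(\mu_k)$ and $\{\mc{V}_s(Q_0)\}$ hold with constants that do \emph{not} degenerate as $\alpha\uparrow\textup{Mu}_p(E)$, so that $\textup{Mu}_p(E)$ is the true threshold and not merely an upper or lower bound for it. The forward implication ($\alpha<\textup{Mu}_p(E)\Rightarrow w\in A_p$) needs a self-improvement / reverse Hölder step to sum the tails of the two geometric series uniformly --- here the median-sparse decomposition of \cref{thm:median_decomp}, applied to $\log d_E$ on each $Q_0$ exactly as in the proof of \cref{thm:BMO_char}, supplies a John--Nirenberg-type estimate that, once $\alpha<\textup{Mu}_p(E)$, upgrades the layer-cake bound to $\ang{w}_{Q_0}\lesssim M_s(d_E,Q_0)^{-\alpha}$ with a constant independent of $Q_0$, and similarly for the dual factor. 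The reverse implication ($\alpha\ge\textup{Mu}_p(E)\Rightarrow w\notin A_p$) requires producing, on cubes $Q_0$ meeting $E$, near-extremizers for the discrete inequality: a finite-layer optimization in the $\mu_k$ realised by an explicit $E$-configuration inside $Q_0$ witnessing $\ang{w}_{Q_0}\ang{w^{-1/(p-1)}}_{Q_0}^{p-1}\to\infty$. I expect this finite-layer optimization, together with the bookkeeping needed to keep all implied constants uniform across scales and across the auxiliary parameter $s$, to be the main obstacle; the degenerate case $|\overline{E}|>0$ is a separate, routine check.
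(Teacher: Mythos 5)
Your overall architecture (reduce to $\alpha>0$, handle $\alpha<0$ by the duality $w\in A_p\Leftrightarrow w^{-1/(p-1)}\in A_{p'}$, get $A_\infty$ from $\bigcup_p A_p$) matches the paper, and your sufficiency direction via a dyadic layer-cake over the sets $\{2^{-k-1}<d_E\le 2^{-k}\}$ is essentially the paper's computation. But the proposal has two genuine gaps, both in places you yourself flag as ``the main obstacle'' and leave undone.

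First, the necessity direction ($w\in A_p\Rightarrow\alpha<\mathrm{Mu}_p(E)$) does not require constructing near-extremizers or any finite-layer optimization, and your plan to produce them is both unnecessary and unexecuted. The paper simply tests the $A_p$ product from below: $\dashint_Q w\ge r^{-\alpha}|E_r\cap Q|/|Q|$ for $r<\mc{L}_s(Q)$, while \cref{prop:median_dist} gives $\mc{L}_s(Q)^{\alpha}\lesssim(\dashint_Q d^{\alpha/(p-1)})^{p-1}$; multiplying yields exactly the first defining inequality of $\mathrm{Mu}_p(E)$ with exponent $\alpha$, and the symmetric computation with $E_r^c\cap Q$ yields the second. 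This gives $\alpha\le\mathrm{Mu}_p(E)$ with no optimization at all, and the strict inequality is then automatic from the openness (self-improvement) of $A_p$: $w\in A_p$ forces $w^{1+\epsilon}\in A_p$, hence $\alpha(1+\epsilon)\le\mathrm{Mu}_p(E)$. In particular your remark that the endpoint behaviour ``is dictated by whether the supremum defining $\mathrm{Mu}_p(E)$ is attained'' is wrong: the endpoint is always excluded, for the openness reason just given.

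Second, your treatment of $A_\infty$ for $\alpha>0$ is circular as written: you set $\mathrm{Mu}_\infty(E):=\sup_{p>1}\mathrm{Mu}_p(E)$, but the paper's $\mathrm{Mu}_\infty(E)$ is defined by the \emph{one-sided} condition $|E_r\cap Q|/|Q|\le C(\mc{L}_s(Q)/r)^{-\alpha}$ only, and the identity $\mathrm{Mu}_\infty(E)=\sup_p\mathrm{Mu}_p(E)$ is a corollary of the theorem, not an available input. The sufficiency ``$0<\alpha<\mathrm{Mu}_\infty(E)\Rightarrow d^{-\alpha}\in A_\infty$'' therefore needs its own argument; the paper picks $\alpha<\alpha'<\mathrm{Mu}_\infty(E)$, runs the layer-cake bound to get $\dashint_Q d^{-\alpha'}\lesssim\mc{L}_s(Q)^{-\alpha'}$, and converts this via \cref{prop:median_dist} into a reverse H\"older inequality for $d^{-\alpha}$. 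Finally, a smaller point: your forward implication does not need \cref{thm:median_decomp} or any John--Nirenberg input; choosing $\lambda$ strictly between $\alpha$ and $\mathrm{Mu}_p(E)$ in the definition of $\mathrm{Mu}_p$ already makes the geometric series $\sum_j 2^{j(\lambda-\alpha)}$ converge with uniform constants.
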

For $1 < p \leq \infty$, $\textup{Mu}_p(E)$ is an ``Assouad dimension''-like quantity defined by terms similar to $\mc{V}_s(Q)$. See \cref{sec:Ap_range} for the precise definitions. \par
As an application of our quantitative characterization, we study, in the range $1 \leq p < q \leq np / (n - p) < \infty$, the Hardy-Sobolev inequalities:
\begin{equation}\label{eqn:HardySobolevIneqIntroduction}
\left(\int_{\R^n}|f|^q \dist(x, E)^{\beta_2} \, dx\right)^{1/q} \leq C\left(\int_{\R^n}|\nabla f|^p \dist(x, E)^{\beta_1} \, dx\right)^{1/p},
\end{equation}
where $\beta_1 \in \R$ and $\frac{q}{p}(n - p + \beta_1) -n = \beta_2$.

More specifically, for $\beta_1 = 0$ (in which case $-p \leq \beta_2 \leq 0$, and thus $\dist(x, E)^{\beta_1}$ does not blow up locally and $\dist(x, E)^{\beta_2}$ blows up locally in a controlled manner), the sets $E \subset \R^n$ for which \cref{eqn:HardySobolevIneqIntroduction} is true have already been completely characterized in \cite[Theorem 1.1]{LehrbackVahakangasInbetweenSobolevandHardy} precisely by the condition that $\textup{\underline{co\,dim}}_{A}(E) > n - \frac{q}{p}(n - p) = - \beta_2 \geq 0$. It is well known that $\textup{\underline{co\,dim}}_{A}(E) > 0$ if and only if $E$ is porous (see e.g. Theorem 10.25 in the excellent monograph \cite{KinnunenLehrbackVahakangasBook}, which also covers Hardy-Sobolev inequalities and distance weights in Chapter 10, or \cite[Section 5]{Luukkainen1998Dimension}).

On the other hand, for $\beta_1 > 0$ (in which case $\frac{q}{p} \beta_1 - p \leq \beta_2 \leq  \frac{q}{p} \beta_1$, and thus $\dist(x, E)^{\beta_1}$ does not blow up locally, whereas $\dist(x, E)^{\beta_2}$ either does not blow up locally, if $\beta_2 \geq 0$, or does blow up locally but in a controlled manner), the necessary condition for the Hardy-Sobolev inequality given in \cite[Theorem 6.1]{LehrbackVahakangasInbetweenSobolevandHardy} is $\textup{\underline{co\,dim}}_{A}(E) > n - \frac{q}{p}(n - p + \beta_1) = - \beta_2$, which implies porosity of the set $E$ (when $- \beta_2 > 0$) and, moreover, it almost matches the sufficient conditions for the Hardy-Sobolev inequality given in \cite[Theorem 6.1]{dyda2017muckenhoupt}, namely that $\textup{\underline{codim}}_{A}(E) > \max \left\{ n - \frac{q}{p}(n - p + \beta_1), \frac{\beta_1}{p-1} \right\}$ (see also \cite[Remark 6.2]{dyda2017muckenhoupt}). So it seems very likely that for the case $\beta_1 > 0$, the characterization of weighted Hardy-Sobolev inequalities will also be in terms of porosity (which is equivalent to $\textup{\underline{co\,dim}}_{A}(E) > 0$).

However, the case $\beta_1 < 0$ is much more technical. Indeed, in this case, $\beta_2 \leq \frac{q}{p} \beta_1 $ (with equality precisely in the critical case $q = p^* = \frac{np}{n-p}$), so both $\dist(x, E)^{\beta_1}$ and $\dist(x, E)^{\beta_2}$ blow up locally, and in the case of $\dist(x, E)^{\beta_2}$, in a pretty nasty manner. 

Now note that both all the necessary conditions and all the sufficient conditions currently known in this $\beta_1 < 0$ case, assume porosity of the set $E$ (see e.g. \cite{dyda2017muckenhoupt} and \cite{LehrbackVahakangasInbetweenSobolevandHardy} and the references therein). Our Theorems in \cref{sec:hardy_sobolev} are, to our knowledge, \emph{the first instance in the literature} characterizing weighted Hardy-Sobolev inequalities to ``break the porosity barrier", at least in the critical case $q = p^* = np / (n - p)$. Indeed, we prove analogues of the aforementioned theorems (both necessary conditions and sufficient conditions, with almost matching parameters), but instead of assuming porosity for the set $E$, we assume the much weaker condition of median porosity for $E$. 

Moreover, since median porosity is \emph{equivalent} to the distance function $\dist(x, E)^{- \alpha}$ being an $A_p$ weight (for appropriate values of $\alpha$ and $p$ as previously discussed), we seem to have ``reached the limit" of the $A_p$ method for the study of characterizations of weighted Hardy-Sobolev inequalities.

Furthermore, the proofs in \cref{sec:hardy_sobolev} reveal what we think is a very interesting feature (see \cref{rem:WeReachedTheLimitOfTheRieszPotentialMethod}). Indeed, a standard and very important method to prove Hardy-Sobolev inequalities is the Riesz potential method, which consists of proving and gluing together each one of the inequalities
\begin{equation}\label{eqn:FunctionDominatedByRieszPotentialOfGradientVIntrod}
    |f(x)| \leq C \mathcal{I}_1(|\nabla f|)(x) ,
\end{equation}
and 
\begin{equation}\label{eqn:ChainOfInequalitiesForRieszPotentialMethodVIntrod}
\left(\int_{\R^n}\mathcal{I}_1(f)(x)^{q}d^{\beta_{2}}(x) \, dx\right)^{1/q} 
\leq C\left(\int_{\R^n}f(x)^{p}d^{\beta_1}(x) \, dx\right)^{1/p},
\end{equation}
where 
\[
\mathcal{I}_1(f)(x) = c_n \int_{\R^n} \frac{f(y)}{|x-y|^{n-1}} dy
\]
is the Riesz potential.

The pointwise inequality \cref{eqn:FunctionDominatedByRieszPotentialOfGradientVIntrod} is well-known and holds ``always". 
The issue is the 
inequality \cref{eqn:ChainOfInequalitiesForRieszPotentialMethodVIntrod}. Our proof of that inequality checks the hypotheses of a $T1$-type theorem. In particular, those hypotheses are also \emph{necessary} for the 
inequality 
\cref{eqn:ChainOfInequalitiesForRieszPotentialMethodVIntrod} to hold. Those hypotheses, which \emph{do hold} in the porous case (see \cite{dyda2017muckenhoupt}), and we prove that they \emph{also hold} in the median porous case for the critical exponent $q = p_* = np / (n - p)$, are actually \emph{false} in the median porous case for the subcritical exponents $q < p_* = np / (n - p)$. In other words, the Riesz potential method works beyond the porosity assumption in the critical case, but it 
\emph{cannot work} beyond the porosity assumption to prove the weighted Hardy-Sobolev inequalities in the subcritical case: we also seem to have ``reached its limit".

\par
\begin{remark}
    A very similar result to \cref{thm:BMO_char_dist} was very recently posted (in fact, it was announced in the arXiv mailing list just the previous day to our posting of the first version of the present manuscript) by Ignacio Gómez Vargas \cite{IgnacioGomezVargas_ApCharacterization2025}. Although the main result of \cite{IgnacioGomezVargas_ApCharacterization2025} is a special case of our main result, \cite{IgnacioGomezVargas_ApCharacterization2025} uses a more geometric argument, and has a probability interpretation for computing certain examples, which is of independent interest. See \cref{sec:history} and the end of \cref{sec:subsets} to contextualize \cite{IgnacioGomezVargas_ApCharacterization2025}.
\end{remark}

\subsection*{Outline of the article} In \cref{sec:prelim} we introduce notation and recall some results about median values and sparse families. In \cref{sec:main_results}, we give an overview of the proof of the median sparse domination (\cref{thm:median_decomp}) and we show how the $BMO$ and $BLO$ characterizations (\cref{thm:BMO_char}) follow. In \cref{sec:history}, we compare \cref{thm:median_decomp} and \cref{thm:BMO_char} to previous known results about median values. In particular, we show how these theorems are improvements of various well-known results in the literature. In \cref{sec:median_proofs}, we give the proof of the median sparse domination result. In \cref{sec:dyadic_example}, we show that the $BMO$ characterization fails in the dyadic setting. In \cref{sec:subsets}, we prove the qualitative result (\cref{thm:BMO_char_dist}), and in \cref{sec:Ap_range}, we prove the quantitative result (\cref{thm:Ap_range}) as well as providing the corresponding notation and definitions. In \cref{sec:example} we provide an example of a one-parameter family of sets that are median porous but not weakly porous. Finally, in \cref{sec:hardy_sobolev}, we apply our quantitative results to the study of Hardy-Sobolev inequalities.

\subsection*{Acknowledgments} M. Pasquariello has been partially supported by a UTEA (University of Toronto Excellence Award). I. Uriarte-Tuero has been partially supported by a grant from the National Research Council of Canada.

\section{Preliminaries}\label{sec:prelim}

\subsection{Basic Definitions}

All the cubes we will be discussing are of the form
\[Q = [a_1, b_1) \times \cdots \times [a_n, b_n) \subset \R^n\]
with $l(Q) := b_1 - a_1 = \cdots = b_n - a_n$. For a (not necessarily dyadic)  cube $Q_0 \subset \R^n$, we let $\mc{D}(Q_0)$ denote the collection of dyadic sub-cubes of $Q_0$. That is, 
\[\mc{D}(Q_0) = \bigcup_{j = 0}^{\infty}\mc{D}_j(Q_0)\]
where $D_j(Q_0)$ is the unique collection of $2^{jn}$ disjoint subcubes of $Q_0$ so that $l(Q) = 2^{-j}l(Q_0)$ for all $Q \in D_j(Q_0)$. For $Q \in \mc{D}(Q_0)$, we let $\widehat{Q} \in \mc{D}(Q_0)$ denote the dyadic parent of $Q$ and we set $\widehat{Q_0} = Q_0$ by convention. For a cube $Q$ and a positive real number $r$, we let $rQ$ denote the cube with the same center as $Q$, but with side length $rl(Q)$. \par
If $f$ is a locally integrable function, we let 
\[\ang{f}_Q := \dashint_{Q}f(x) \, dx\]
denote the average of $f$ on $Q$. Let $BMO$ be set of locally integrable functions on $\R^n$ satisfying
\[\|f\|_{BMO} := \sup_{Q}\dashint_{Q}|f(x) - \ang{f}_Q| \, dx < \infty.\]
Let $BLO$ be set of locally integrable functions on $\R^n$ satisfying
\[\|f\|_{BLO} := \sup_{Q}[\ang{f}_Q - \text{ess inf}_{Q} \, f(x)] < \infty.\]
Let $A_1$ be set of nonnegative locally integrable functions on $\R^n$ satisfying
\[[w]_{A_1} := \sup_{Q} \left(\dashint_{Q} w(x) \, dx\right)\|w^{-1}\|_{L^\infty(Q)} < \infty.\]
For $1 < p < \infty$, let $A_p$ be set of nonnegative locally integrable functions on $\R^n$ satisfying
\[[w]_{A_p} := \sup_{Q} \left(\dashint_{Q} w(x) \, dx\right)\left(\dashint_{Q} w(x)^{-1 / (p - 1)} \, dx\right)^{p-1} < \infty.\]
Let $A_\infty$ be set of nonnegative locally integrable functions on $\R^n$ satisfying
\[[w]_{A_\infty} := \sup_{Q} \left(\dashint_{Q} w(x) \, dx\right)\exp\left(\dashint_{Q} \log w \, dx\right)^{-1} < \infty.\]
The following proposition is well-known (see \cite{garcia1985weighted} for example).

\begin{proposition}\label{prop:prelim_BMO_Ap}
We have that
\begin{enumerate}
    \item \label{prelim_BMO_Ap_1} $\log w \in BLO$ if and only if $w^{\alpha} \in A_1$ for some $\alpha > 0$.
    \item \label{prelim_BMO_Ap_2} $\log w \in BMO$ if and only if $w^{\alpha} \in A_p$ for some $\alpha > 0$ (where $1 < p < \infty$ is given).
    \item \label{prelim_BMO_Ap_3} $\log w \in BMO$ if and only if $w^{\alpha} \in A_\infty$ for some $\alpha > 0$.
    \item \label{prelim_BMO_Ap_4} $A_\infty = \bigcup_{1 \leq p < \infty}A_p$.
\end{enumerate}
\end{proposition}

\subsection{Median Values}

\begin{definition}\label{def:DefUpperSMedianValue}
    Let $f$ be a real-valued measurable function on a cube $Q \subset \R^n$. For $0 \leq s < 1$, we define the upper $s$-median value
    \[M_s(f, Q) := \sup \{\lambda \in \R : |\{x \in Q : f(x) < \lambda\}| \leq s|Q|\},\]
 with the usual understanding that $\sup \emptyset = - \infty$.
    When $s = 1/2$, we denote $M_{1/2}(f, Q)$ simply by $M(f,Q)$.
\end{definition}

\begin{remark}\label{rem:MedianIsRealNumber} If $s>0$, $M_s(f, Q)$ is a real number since there always exists some value $\lambda$ so that 
\[|\{x \in Q : f(x) < \lambda\}| \leq s|Q|.\]
This follows immediately from the fact that
\[\lim_{\lambda \to -\infty} |\{x \in Q : f(x) < \lambda\}| = |\{x \in Q : f(x) = -\infty\}| = 0.\]
\end{remark}

\begin{remark}\label{MedianForSEquals0}
    When $s = 0$, $M_0(f, Q)$ becomes the essential infimum of $f$ over $Q$, $\einf_{Q}f$.
\end{remark}

We list some properties of median values in the following proposition, some of which can be found in \cite{Poelhuis2012}.

\begin{proposition}\label{prop:medians}
    Let $f$ and $g$ be real-valued measurable functions on a cube $Q$ and let $0 \leq s, t < 1$.
    \begin{enumerate}
        \item \label{prop:medians_1} $\lambda = M_s(f, Q)$ satisfies
        \begin{equation}\label{eqn:medians_1}
        |\{x \in Q : f(x) < \lambda\}| \leq s|Q| \text{ and } |\{x \in Q : f(x) > \lambda\}| \leq (1 - s)|Q|.
        \end{equation}
        \item \label{prop:medians_2} For $0 \leq s < t < 1$,
        \[M_s(f, Q) \leq M_t(f, Q).\]
        \item \label{prop:medians_3} For $c \in \R$
        \[M_s(f, Q) + c = M_{s}(f + c, Q).\]
        \item \label{prop:medians_4} If $f \leq g$ a.e. then,
        \[M_s(f, Q) \leq M_s(g, Q).\]
        \item \label{prop:medians_5} For $0 < s < 1$,
        \[|M_s(f, Q)| \leq 
        \begin{cases}
            M_{s}(|f|, Q), & \text{ if } M_s(f, Q) \geq 0 \\
            M_{1 - s}(|f|, Q), & \text{ if } M_s(f, Q) \leq 0.
        \end{cases}\]
        \item \label{prop:medians_6} If $f \geq 0$ then
        \[M_s(f, Q) \leq \frac{1}{1 - s} \ang{f}_Q.\]
        \item \label{prop:medians_7} For $0 < s < 1$,
        \[\lim_{Q \ni x, Q \downarrow x}M_s(f, Q) = f(x)\]
        almost everywhere.
        \item \label{prop:medians_8} Suppose $f$ is locally integrable. If $0 < s < t < 1$ then
        \[M_t(f, Q) - M_s(f, Q) \leq  \left(\frac{1}{1 - t} + \frac{1}{s}\right)\dashint_{Q}|f - \ang{f}_Q| \, dx\]
        so in particular if $f \in BMO$ then 
        \[\sup_{Q}[M_t(f, Q) -M_s(f, Q)] \leq \left(\frac{1}{1 - t} + \frac{1}{s}\right)\|f\|_{BMO}.\]
        (The $s=0$ case): 
        Similarly, if $0 < t < 1$ then
        \[M_t(f, Q) - \einf_Q f \leq \frac{1}{1 - t}\dashint_{Q}[f - \einf_Q f] \, dx\]
        so in particular if $f \in BLO$ then
        \[\sup_{Q}[M_t(f, Q) - \einf_{Q}f] \leq \frac{1}{1 - t}\|f\|_{BLO}.\]
        \item \label{prop:medians_9} If $0 < s < 1$ then 
        \[M_s(f, Q) = \inf \{\lambda \in \R : |\{x \in Q : f(x) > \lambda\}| < (1 - s)|Q|\}.\]
        \item \label{prop:medians_10} Let $0 < s < 1$. If $g$ is continuous and strictly increasing then \[g(M_s(f, Q)) = M_s(g \circ f, Q).\]
        If $g$ is continuous and strictly decreasing then \[g(M_s(f, Q)) \leq M_{1 - s}(g \circ f, Q).\] Consequently, for $f > 0$ a.e. and for all $p \in \R$
        \[M_s(f, Q)^p \lesssim_s \dashint_{Q}f^p \, dx.\]
        \item \label{prop:medians_11} Let $0 < s < 1$.
        If $g$ is continuous and strictly decreasing, then, for any $s' > s$, 
        \[
        g(M_s(f, Q)) \geq M_{1 - s'}(g \circ f)
        \]
        
    \end{enumerate}
\end{proposition}

\begin{proof}
    For \eqref{prop:medians_1}, if $M_s(f, Q) = - \infty$ (which can only happen if $s=0$), there is nothing to prove, so assume wlog that $M_s(f, Q) > - \infty$ and
    let $\lambda < M_s(f, Q)$. We have
    \[|\{x \in Q : f(x) < \lambda\}| \leq s|Q|\]
    so
    \[|\{x \in Q : f(x) < M_s(f, Q)\}| = \lim_{\lambda \to M_s(f, Q)^-}|\{x \in Q : f(x) < \lambda\}| \leq s|Q|.\]
    Also, for any $\lambda > M_s(f, Q)$ we have
    \[|\{x \in Q : f(x) < \lambda\}| > s|Q|.\]
    Thus,
    \[|\{x \in Q : f(x) > M_s(f, Q)\}| = \lim_{\lambda \to  M_s(f, Q)^+}|\{x \in Q : f(x) > \lambda\}| \leq (1 - s)|Q|.\]
    Thus \eqref{eqn:medians_1} holds for $M_s(f, Q)$. \par 
    For \eqref{prop:medians_2}, we have, using \eqref{prop:medians_1} twice,
    \[|\{x \in Q : f(x) < M_s(f, Q)\}| \leq s|Q| < t|Q| \leq |\{x \in Q : f(x) \leq M_t(f, Q)\}|.\]
    It follows that $M_s(f, Q) \leq M_t(f, Q)$. \par
    \eqref{prop:medians_3} and \eqref{prop:medians_4} are trivial. The proofs of \eqref{prop:medians_5}, \eqref{prop:medians_6}, and \eqref{prop:medians_7} can be found in \cite[Prop 1.1 and Thm 2.1]{Poelhuis2012}. \par
    For \eqref{prop:medians_8}, suppose $s > 0$. The $s = 0$ case can be proved similarly (recall Remark \ref{MedianForSEquals0}). Observe that  by \eqref{prop:medians_6},
    \[M_t(f, Q) - \ang{f}_Q = M_{f - \ang{f}_Q}(t, Q) \leq \frac{1}{1 - t}\dashint_{Q}|f - \ang{f}_Q| \, dx\]
     and, by \cref{eqn:medians_1} and \eqref{prop:medians_6} again,
    \[f_{Q} - M_s(f, Q) \leq \ang{f}_Q + M_{-f}(1-s, Q) \leq \frac{1}{s}\dashint_{Q}|f - \ang{f}_Q| \, dx.\]
    Summing the previous two estimates gives
    \[M_{t}(f, Q) - M_s(f, Q) \leq \left(\frac{1}{1 - t} + \frac{1}{s}\right)\dashint_{Q}|f - \ang{f}_Q| \, dx\]
    and the result follows. \par 
    The proof of \eqref{prop:medians_9} is essentially contained in the proof of \cite[Prop. 1.2]{Poelhuis2012}. Simply consider $\overline{\alpha} = \inf \{ \alpha \in \R : |\{ y \in Q : f(y) > \alpha\}| < s |Q| \}$. The first inequality ($M_s(f, Q) \leq \ov{\alpha}$) proved there carries over, and then for the opposite inequality, take $\overline{\alpha} - \varepsilon$ and note that $\{ f \geq \overline{\alpha} - \varepsilon\} \supset \{ f > \overline{\alpha} - \varepsilon\}$ and follow the definitions.
    \par
    For \eqref{prop:medians_10}, if $g$ is strictly increasing then 
    \[|\{x \in Q : f(x) \leq \lambda\}| \leq s|Q| \Leftrightarrow |\{x \in Q : g \circ f(x) \leq g(\lambda)\}| \leq s|Q|\]
    and thus by taking the supremum over $\lambda$ gives $g(M_s(f, Q)) = M_{s}(g \circ f, Q)$. Now suppose $g$ is strictly decreasing. By definition,
    \[|\{x \in Q: f(x) < M_s(f, Q) + \epsilon\}| > s|Q|\]
    for each $\epsilon > 0$. Therefore 
    \[|\{x \in Q: g \circ f(x) < g(M_s(f, Q) + \epsilon)\}| < (1 - s)|Q|.\]
    It follows that $g(M_s(f, Q) + \epsilon) \leq M_{1-s}(g \circ f, Q)$. Since $\epsilon$ was arbitrary and $g$ is continuous, the result follows. The final assertions then follow from \eqref{prop:medians_6}. \par
    For \eqref{prop:medians_11}, note that by \eqref{prop:medians_10} 
    \[g(M_s(f, Q)) = -(-g(M_s(f, Q))) = -M_s(-g \circ f, Q).\]
    It then follows from \eqref{prop:medians_9} that 
    \[-M_s(h, Q) \geq M_{1 - s'}(-h, Q)\]
    for any $s' > s$ so by setting $h = -g \circ f$, we have the result.

\end{proof}

\begin{remark}\label{rmk:essinf}
    Proposition $\subref{prop:medians}{prop:medians_7}$ does not hold for $s = 0$. That is, it is not necessarily true that
    $\lim_{Q \ni x, Q \downarrow x}\einf_{Q}f(x) = f(x)$
   almost everywhere. Indeed, let $E \subset [0, 1]^n$ be a closed and nowhere dense set with positive measure. Then $\einf_{Q}\chi_E = 0$ for all cubes $Q$. However, $\chi_E$ is nonzero on a set of positive measure.
\end{remark}


\subsection{Sparse Families}

\begin{definition}
    Let $\mc{S}$ be a family of (not necessarily dyadic) cubes in $\R^n$. 
    \begin{enumerate}
        \item For $0 < \eta < 1$, we say that $\mc{S}$ is $\eta$-sparse if for each $Q \in \mc{S}$, there exists sets $E_Q \subset Q$ with $|E_Q| \geq \eta|Q|$ and the $E_Q$'s are pairwise disjoint.
        \item For $\Lambda > 1$, we say that $\mc{S}$ is $\Lambda$-Carleson if for every sub-collection $\mc{S}' \subset \mc{S}$,
        \[\sum_{Q \in S'}|Q| \leq \Lambda \left|\bigcup_{Q \in \mc{S}'}Q\right|.\]
    \end{enumerate}
\end{definition}

\begin{remark}
    To emphasize when a sparse family consists of \emph{dyadic} cubes, we will denote it by $\mc{F}$ instead of $\mc{S}$.
\end{remark}

\begin{remark}\label{rmk:sparse_carleson}
    It was shown in \cite{2017Hanninen} that $\mc{S}$ is $\eta$-sparse if and only if $\mc{S}$ is $\eta^{-1}$-Carleson. This was already well-known to hold for dyadic families. In fact, the equivalence was shown to hold for arbitrary countable collections of Borel sets. In particular, the equivalence gives an easy proof that if $\mc{S}_j$ is $\eta_j$-sparse for each $j = 1, ..., m$, then $\bigcup_{j=1}^{m}\mc{S}_j$ is $(\sum_{j=1}^{m}\eta_j^{-1})^{-1}$-sparse. We will use this fact later.
\end{remark}

\section{Pointwise Sparse Bounds}\label{sec:main_results}

For a constant $0 < a \leq 1/2$ define the local mean oscillation of $f$ on $Q$ by
\[\omega_a(f, Q) := \inf_{c}(\chi_Q (f - c)^*)(a |Q|)\]
where $g^*$ denotes the decreasing rearrangement of the function $g$, i.e. $g^\ast (\lambda) = \inf \{ \alpha >0 : |\{ x \in \R^n : |f(x)| > \alpha \}| \leq \lambda \}$, for $0 < \lambda < \infty$. The following two ``local mean oscillation decompositions'' have been very useful in proving sharp norm estimates for Calderón-Zygmund operators.
\begin{theorem}\cite{Lerner2010}\label{thm:LernerSparseDominationFormula}
Let $f$ be a measurable function on a cube $Q_0$. Then there is an $\eta$-sparse family $\mc{F} = \mc{F}(f, Q_0, n) \subset \mc{D}(Q_0)$ so that
    \[|f(x) - M(f, Q_0)| \leq 4 M^{\sharp}_{1/4,Q_0} f(x)  + 4 \sum_{Q \in \mc{F}}\omega_{a}(f, \widehat{Q})\chi_Q(x)\]
    a.e. on $Q_0$ where $a = 2^{-n-2}$ and $\eta$ does not depend on $f$ or $Q_0$.
\end{theorem}

Above, $\widehat{Q}$ denotes the dyadic parent of $Q$ and $M^{\sharp}_{1/4,Q_0}$ denotes the local sharp maximal function
relative to Q is defined by $M^{\sharp}_{\lambda,Q} f(x) = \sup_{x \in Q' \subset Q} \omega_\lambda (f, Q')$.

This seminal result was later refined by Hyt\"onen:

\begin{theorem}\label{thm:local_decomp}\cite{Hytonen2012}
    Let $f$ be a measurable function on a cube $Q_0$. Then there is an $\eta$-sparse family $\mc{F} = \mc{F}(f, Q_0, n) \subset \mc{D}(Q_0)$ so that
    \[|f(x) - M(f, Q_0)| \leq 2 \sum_{Q \in \mc{F}}\omega_{a}(f, Q)\chi_Q(x)\]
    a.e. on $Q_0$ where $a = 2^{-n-2}$ and $\eta$ doesn't depend on $f$ or $Q_0$.
\end{theorem}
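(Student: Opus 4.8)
The plan is to reconstruct the decomposition from a Calderón--Zygmund stopping-time argument performed relative to the median, generating $\mc{F}$ by recursion on generations. The first ingredient is a comparison of $\omega_a(f,Q)$ with the median $M(f,Q)$: given $\varepsilon>0$, pick $c_Q$ with $(\chi_Q(f-c_Q))^*(a|Q|)<\omega_a(f,Q)+\varepsilon$, so that $|\{x\in Q:|f(x)-c_Q|>\omega_a(f,Q)+\varepsilon\}|\le a|Q|<\tfrac12|Q|$; this two-sided tail bound forces $|M(f,Q)-c_Q|\le\omega_a(f,Q)+\varepsilon$ straight from the definition of the upper $\tfrac12$-median. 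Combining the two via the triangle inequality (if $|f-M(f,Q)|>2(\omega_a(f,Q)+\varepsilon)$ then $|f-c_Q|>\omega_a(f,Q)+\varepsilon$) and letting $\varepsilon\downarrow 0$ yields
\[
|\Omega_Q|\le a|Q|=2^{-n-2}|Q|,\qquad \Omega_Q:=\bigl\{x\in Q:|f(x)-M(f,Q)|>2\omega_a(f,Q)\bigr\}=\Omega_Q^+\cup\Omega_Q^-,
\]
where $\Omega_Q^{\pm}$ are the pieces on which $\pm\bigl(f-M(f,Q)\bigr)>2\omega_a(f,Q)$.

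Next I build $\mc{F}$ recursively: put $Q_0\in\mc{F}$, and for each $Q\in\mc{F}$ let $\mathrm{ch}(Q)$ be the collection of maximal $P\in\mc{D}(Q)$ with $P\subsetneq Q$ and $|P\cap\Omega_Q|\ge 2^{-n-1}|P|$; adjoin these to $\mc{F}$ and iterate. The cubes of $\mathrm{ch}(Q)$ are pairwise disjoint and, since $|P|\le 2^{n+1}|P\cap\Omega_Q|$ for each of them, $\sum_{P\in\mathrm{ch}(Q)}|P|\le 2^{n+1}|\Omega_Q|\le\tfrac12|Q|$. Hence $E_Q:=Q\setminus\bigcup_{P\in\mathrm{ch}(Q)}P$ satisfies $|E_Q|\ge\tfrac12|Q|$, the $E_Q$ are pairwise disjoint over all of $\mc{F}$, and $\mc{F}\subset\mc{D}(Q_0)$ is $\tfrac12$-sparse. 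Writing $\mc{F}_k$ for the $k$-th generation, one has $\bigl|\bigcup_{Q\in\mc{F}_k}Q\bigr|\le 2^{-k}|Q_0|$, so for a.e.\ $x\in Q_0$ there is a finite chain $Q_0=Q^{(0)}\supsetneq Q^{(1)}\supsetneq\dots\supsetneq Q^{(m)}\ni x$ with $Q^{(i+1)}\in\mathrm{ch}(Q^{(i)})$ and $x\in E_{Q^{(m)}}$.

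The pointwise estimate then rests on two observations. First, for a.e.\ $x\in E_Q$ one has $x\notin\Omega_Q$: membership in $E_Q$ forces $\dashint_P\mathbf{1}_{\Omega_Q}<2^{-n-1}$ for every dyadic $P$ with $x\in P\subsetneq Q$ (otherwise $x$ would lie in a selected cube), and the Lebesgue differentiation theorem gives $\mathbf{1}_{\Omega_Q}(x)<1$; hence $|f(x)-M(f,Q)|\le 2\omega_a(f,Q)$ a.e.\ on $E_Q$. Second, if $P\in\mathrm{ch}(Q)$ then its dyadic parent $\widehat P$ is not selected (nor is the root $Q$, since $|\Omega_Q|\le 2^{-n-2}|Q|$), so $|P\cap\Omega_Q|\le|\widehat P\cap\Omega_Q|<2^{-n-1}|\widehat P|=\tfrac12|P|$; in particular $|P\cap\Omega_Q^{\pm}|<\tfrac12|P|$, and the defining properties of the $\tfrac12$-median (Proposition~\ref{prop:medians}) give $|M(f,P)-M(f,Q)|\le 2\omega_a(f,Q)$. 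Telescoping along the chain and using that each $Q^{(i)}$ contains $x$,
\[
|f(x)-M(f,Q_0)|\le|f(x)-M(f,Q^{(m)})|+\sum_{i=0}^{m-1}\bigl|M(f,Q^{(i+1)})-M(f,Q^{(i)})\bigr|\le\sum_{i=0}^{m}2\omega_a(f,Q^{(i)})\le 2\sum_{Q\in\mc{F}}\omega_a(f,Q)\chi_Q(x).
\]

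The main obstacle is the simultaneous calibration of the two constants. The stopping density must be exactly $2^{-n-1}$, so that a non-selected parent forces $\Omega_Q$ to occupy less than $\tfrac12|P|$ of each selected $P$ --- this is precisely what confines the median jump to $2\omega_a(f,Q)$ in the second observation --- and $a=2^{-n-2}$ is then dictated both by the requirement that the root cube not be selected and by the need for $\mathrm{ch}(Q)$ to pack to at most $\tfrac12|Q|$, which is what makes $\mc{F}$ sparse with a dimension-free constant. A minor secondary issue, the bookkeeping of strict versus non-strict inequalities in the median identities, is absorbed by the $\varepsilon$-slack in the choice of $c_Q$ together with the strict inequality in the stopping rule.
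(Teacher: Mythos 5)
Your argument is correct. Note that the paper does not actually prove this theorem: it is quoted from Hyt\"onen's work, and the only in-paper route to it is the derivation in \cref{sec:history}, which obtains the statement as a corollary of \cref{thm:median_decomp} via \cref{prop:median_compare} (take $s=1/2$, $t=1-a$) --- but that route only yields a not-necessarily-dyadic family and a constant depending on $n$, whereas your direct stopping-time argument recovers the sharp form: a dyadic $\tfrac12$-sparse family and the constant $2$. Structurally your proof runs parallel to the paper's proof of \cref{prop:upper_osc}: both are generation-by-generation selections whose sparseness comes from a packing bound on the children of each selected cube, followed by a telescoping of medians down the stopping tree. The differences are that your stopping rule is a \emph{density} condition on the exceptional set $\Omega_Q=\{|f-M(f,Q)|>2\omega_a(f,Q)\}$ (which is why the dimensional calibration $a=2^{-n-2}$ versus threshold $2^{-n-1}$ is needed, exactly as you explain), while the paper's rule is a condition on the increments $M_s(f,P)-M_s(f,Q')$; and you control both signs of the oscillation with a single family, whereas the paper must run two separate stopping times ($\mc{F}^+$ and $\mc{F}^-$) precisely because for general $s\neq t$ the upper and lower exceptional sets cannot be packaged into one small set. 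All the delicate points --- the $\varepsilon$-slack identifying $M(f,Q)$ with a near-optimal $c_Q$ to within $\omega_a(f,Q)$, the use of the non-selected parent to force $|P\cap\Omega_Q^{\pm}|<\tfrac12|P|$ and hence bound the median jump, and the Lebesgue differentiation step on $E_Q$ --- are handled correctly.
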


We explain in \cref{sec:history} how \cref{thm:median_decomp} can be seen as an improvement of \cref{thm:local_decomp}. We restate \cref{thm:median_decomp} here. Let $d_{s,t}(f, Q)$ denote the ``median difference'':
\[d_{s, t}(f, Q) := M_t(f, Q) - M_s(f, Q).\]
\begin{theorem}\label{thm:median_decomp2}
    Let $f$ be a real-valued measurable function on a cube $Q_0$ and let $0 < s < t < 1$.  Then there is an $\eta$-sparse family $\mc{S} = \mc{S}(f, Q_0, s, t)$ of (not necessarily dyadic) sub-cubes of $Q_0$ so that
        \[|f(x) - M_s(f, Q_0)| \lesssim_{s,t, n} \sum_{Q \in \mc{S}}d_{s, t}(f, Q)\chi_Q(x)\]
    a.e. on $Q_0$ where $\eta$ and $C$ depend only on $s,t$, and $n$. 
\end{theorem}
This theorem allows us to completely control the oscillation of a function by its median differences $d_{s,t}(f, Q)$ for fixed $0 < s < t < 1$. \cref{thm:median_decomp} will follow from a dyadic (martingale) analogue, namely \cref{thm:median_decomp_general}. We fix some notation for different measures of oscillations of $f$. For $Q \in \mc{D}(Q_0)$, let
\[\sigma^+_{s,t}(f, Q) := d_{s,t}(f, Q) + (M_s(f, Q) - M_s(f, \widehat{Q}))_+,\]
\[\sigma^-_{s,t}(f, Q) := d_{s,t}(f, Q) + (M_t(f, \widehat{Q}) - M_t(f, Q))_+,\]
\[\sigma_{s,t}(f, Q) := \sigma^+_{s,t}(f, Q) + \sigma^-_{s,t}(f, Q).\]
As the notation suggests, $\sigma^+_{s,t}(f, Q)$ will control the upper oscillations of $f$ and $\sigma^-_{s,t}(f, Q)$ will control the lower oscillations.
\begin{theorem}\label{thm:median_decomp_general}
    Let $f$ be a real-valued measurable function on a cube $Q_0$ and let $0 < s < t < 1$.  Then there is an $\eta$-sparse family $\mc{F} = \mc{F}(f, Q_0, s, t) \subset \mc{D}(Q_0)$ so that
        \[|f(x) - M_s(f, Q_0)| \leq 2\sum_{Q \in \mc{F}}\sigma_{s,t}(f, Q)\chi_Q(x)\]
    a.e. on $Q_0$ where $\eta$ depends only on $s,t$, and $n$.
\end{theorem}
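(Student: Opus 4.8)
The plan is to mimic the Calderón–Zygmund-style stopping-time argument that underlies Lerner's decomposition (\cref{thm:local_decomp}), but run simultaneously an ``upper'' stopping time governed by the $s$-median and a ``lower'' stopping time governed by the $t$-median, so that the two families of stopping cubes can be merged into a single sparse family. First I would fix $Q_0$ and set $m_0 = M_s(f,Q_0)$. The key observation is that on $Q_0$ the set where $f$ exceeds $M_t(f,Q_0)$ has measure at most $(1-t)|Q_0|$ and the set where $f$ is below $M_s(f,Q_0)$ has measure at most $s|Q_0|$; so away from an exceptional set of controlled measure, $f$ lies in the ``good window'' $[M_s(f,Q_0), M_t(f,Q_0)]$, which has length exactly $d_{s,t}(f,Q_0)$. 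I would then perform a Calderón–Zygmund stopping time to capture the part of $Q_0$ where $f$ leaves this window: let $\mathcal{Q}$ be the maximal dyadic cubes $Q \subset Q_0$ with either $M_s(f,Q) > M_s(f,Q_0) + d_{s,t}(f,Q_0)$ (upper stopping) or $M_t(f,Q) < M_s(f,Q_0)$ (lower stopping); by maximality, on the complement of $\bigcup \mathcal{Q}$ one has $|f(x) - M_s(f,Q_0)| \le \text{(something comparable to)} \; d_{s,t}(f,Q_0) + \sigma\text{-type parent corrections}$ a.e., using \subref{prop:medians}{prop:medians_7} to pass from median values on shrinking cubes to pointwise values.

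The sparsity of $\mathcal{Q}$ is where the definitions of $\sigma^+$ and $\sigma^-$ do their work. For an upper-stopping cube $Q$ with parent $\widehat Q$ (which was not selected), the jump $M_s(f,Q) - M_s(f,\widehat Q)$ is genuinely large — at least a fixed multiple of $d_{s,t}(f,\widehat Q)$ — which by a Chebyshev-type argument (the set $\{x \in \widehat Q : f(x) < M_s(f,\widehat Q)\}$ has measure $\le s|\widehat Q|$, and inside $Q$ the $s$-median has jumped up, forcing $f$ to be large on a definite proportion of $Q$) forces $|E_Q| := |\{x \in Q : f(x) \ge M_s(f,\widehat Q)\}|$, intersected with the ``bad'' part, to be a fixed proportion of $|Q|$; these sets are disjoint because the $Q$'s are disjoint dyadic cubes. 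The correction term $(M_s(f,Q) - M_s(f,\widehat Q))_+$ in $\sigma^+$ is exactly what pays for the telescoping loss incurred when a long chain of cubes each move the $s$-median up by only a little: summing the corrections along a branch is a telescoping sum bounded by the total rise. I would handle the lower-stopping cubes symmetrically with $\sigma^-$, using \subref{prop:medians}{prop:medians_9} (the dual description of the $s$-median) in place of the definition.

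I would then iterate: inside each stopping cube $Q \in \mathcal{Q}$ I restart with the new base median $M_s(f,Q)$, producing a new generation of stopping cubes, and so on, yielding a tree of cubes $\mathcal{F}$. The pointwise bound is assembled by telescoping down the branch of the tree containing a given point $x$: each level contributes at most $d_{s,t}(f,Q)$ plus the parent-correction terms, and the factor $2$ in the statement comes from adding the contributions of the upper and lower halves of the decomposition (just as in \cref{thm:local_decomp}). The global sparsity of $\mathcal{F}$ follows from the per-generation sparsity plus the standard fact that a union of finitely many $\eta_j$-sparse families is sparse (\cref{rmk:sparse_carleson}), after checking that the $E_Q$ across generations can be arranged disjoint by intersecting each $E_Q$ with the complement of the next generation's stopping cubes inside $Q$ — this costs only a fixed fraction of measure provided the stopping generation is chosen to cover at most, say, half of each cube, which can be arranged by making the stopping thresholds large enough in terms of $s,t,n$.

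\medskip

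The main obstacle I anticipate is the interaction between the two stopping times and the parent corrections: one must verify that choosing the upper and lower thresholds as fixed multiples (depending on $s,t,n$) of $d_{s,t}(f,\widehat Q)$ simultaneously (i) forces a definite proportion of each stopping cube into a disjoint ``witness'' set $E_Q$ — here the asymmetry between $s$ and $t$ and the possibility $t - s$ small means the Chebyshev estimate is delicate and the constant degenerates as $t-s \to 0$, consistent with the $\lesssim_{s,t,n}$ in the conclusion — and (ii) still leaves the good-window bound $|f(x) - M_s(f,Q)| \lesssim d_{s,t}(f,Q) + \text{corrections}$ valid a.e. off the next generation. Getting a single clean choice of thresholds that balances these two competing requirements, and bookkeeping the telescoping of the $(\cdot)_+$ correction terms so that they are genuinely absorbed into $\sigma_{s,t}$ rather than producing an unbounded geometric-type loss, is the technical heart of the argument.
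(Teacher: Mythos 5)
Your overall strategy --- stopping times driven by jumps of the $s$- and $t$-medians, the Chebyshev count of stopping cubes, the parent-correction terms in $\sigma^{\pm}$ to absorb the overshoot at a selected cube, and \subref{prop:medians}{prop:medians_7} to pass from medians to pointwise values off the stopping set --- is essentially the paper's. The one genuine gap is in how you organize the two stopping conditions. You select a \emph{single} family $\mc{Q}$ of maximal cubes satisfying \emph{either} the upper or the lower condition, and your sparsity argument needs each generation to cover at most a fixed fraction $<1$ of its parent, so that witness sets can be carved out of the uncovered part. But the Chebyshev estimate only gives: upper-stopping cubes cover at most $\tfrac{1-t}{1-s}|Q'|$ (each such $Q$ has $f>M_t(f,Q')$ on at least $(1-s)|Q|$ of its measure, and $\{x\in Q': f(x)>M_t(f,Q')\}$ has measure at most $(1-t)|Q'|$), while lower-stopping cubes cover at most $\tfrac{s}{t}|Q'|$. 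The sum $\tfrac{1-t}{1-s}+\tfrac{s}{t}$ exceeds $1$ whenever $t-s$ is small (e.g.\ $s=0.4$, $t=0.5$ gives $\approx 1.63$), so a single generation of the combined family may exhaust the parent and the disjoint-witness construction breaks down. Your proposed fix --- enlarging the thresholds to a large multiple of $d_{s,t}(f,\widehat Q)$ --- does not help: raising the upper threshold from $d_{s,t}(f,Q')$ to $K\,d_{s,t}(f,Q')$ still only lets you compare against $|\{f>M_t(f,Q')\}|\le(1-t)|Q'|$, because nothing controls the measure of super-level sets above $M_t(f,Q')$.

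The repair, which is the paper's route (\cref{prop:upper_osc}), is to run the two stopping times as two entirely \emph{separate} iterated constructions: a tree $\mc F^+$ built only from the condition $M_s(f,Q)-M_s(f,Q')>\sigma^+_{s,t}(f,Q')$, which controls $[f(x)-M_s(f,Q_0)]_+$ and is sparse on its own with $\eta^+=\tfrac{t-s}{1-s}$; and an independent tree $\mc F^-$ built only from $M_t(f,Q')-M_t(f,Q)>\sigma^-_{s,t}(f,Q')$, which controls $[M_t(f,Q_0)-f(x)]_+$ and is sparse with $\eta^-=\tfrac{t-s}{t}$. Since
\[|f(x)-M_s(f,Q_0)|\le[f(x)-M_s(f,Q_0)]_++[M_t(f,Q_0)-f(x)]_+,\]
adding the two pointwise bounds yields the theorem with the factor $2$, and the union $\mc F^+\cup\mc F^-$ is sparse by the Carleson characterization (\cref{rmk:sparse_carleson}) --- this is precisely where the ``union of sparse families is sparse'' fact you cite must be invoked, rather than trying to make one interleaved family sparse by hand. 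With that reorganization, the rest of your outline goes through.
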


\begin{remark}
    \cref{thm:median_decomp_general} is valid if we replace the Lebesgue measure with any (not necessary doubling) locally finite measure $\mu$. However, if we replace the Lebesgue measure with a locally finite measure $\mu$ in \cref{thm:median_decomp}, we must assume that $\mu$ is doubling for the conclusion to hold.
\end{remark}

As an immediate consequence of \cref{thm:median_decomp_general}, we obtain \cref{thm:BMO_char} which we restate here.

\begin{theorem}
    Let $f$ be a real-valued measurable function on $\R^n$ and fix $0 \leq s < t < 1$.
    \begin{enumerate}
        \item  For $s > 0$, we have
        \[f \in BMO \quad \text{if and only if} \quad \|f\|_{s,t} := \sup_{Q}d_{s,t}(f, Q) < \infty.\]
        Moreover,
        \[\|f\|_{s, t} \lesssim_{s, t} \|f\|_{BMO} \lesssim_{s, t, n}\|f\|_{s,t}.\]
        \item For $s = 0$, we have
        \[f \in BLO \quad \text{if and only if} \quad \|f\|_{0,t} := \sup_{Q}d_{0,t}(f, Q) < \infty.\]
        Moreover,
        \[\|f\|_{0, t} \lesssim_{t} \|f\|_{BLO} \lesssim_{t, n}\|f\|_{0,t}.\]
    \end{enumerate}
\end{theorem}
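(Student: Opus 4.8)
The plan is to deduce this theorem from the sparse decomposition in \cref{thm:median_decomp_general} together with the elementary bound in \subref{prop:medians}{prop:medians_8}, which already gives one direction. First, for the easy inequality: if $f \in BMO$ (resp.\ $BLO$), then \subref{prop:medians}{prop:medians_8} immediately yields $\|f\|_{s,t} \lesssim_{s,t} \|f\|_{BMO}$ (resp.\ $\|f\|_{0,t} \lesssim_{t}\|f\|_{BLO}$), so the content is the reverse inequality. I would reduce to a single cube: fix a cube $R$ and show $\dashint_R |f - \ang{f}_R|\,dx \lesssim_{s,t,n}\|f\|_{s,t}$ (and similarly for $BLO$). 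Note that when $s > 0$, the hypothesis $\|f\|_{s,t} < \infty$ forces every median $M_s(f,Q)$ to be finite, so in particular $f$ is finite a.e., and the median decomposition applies with $Q_0 = R$.

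The main step: apply \cref{thm:median_decomp_general} on $Q_0 = R$ to get an $\eta$-sparse family $\mc{F} \subset \mc{D}(R)$ with
\[
|f(x) - M_s(f,R)| \leq 2\sum_{Q \in \mc{F}} \sigma_{s,t}(f,Q)\,\chi_Q(x) \quad \text{a.e. on } R.
\]
Integrate over $R$ and use sparseness: $\sum_{Q \in \mc{F}}\sigma_{s,t}(f,Q)|Q| \leq \eta^{-1}\sum_{Q \in \mc{F}}\sigma_{s,t}(f,Q)|E_Q|$, and since the $E_Q$ are disjoint this is at most $\eta^{-1}\big(\sup_Q \sigma_{s,t}(f,Q)\big)|R|$. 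So I need a uniform bound on $\sigma_{s,t}(f,Q)$ for dyadic $Q \subset R$ in terms of $\|f\|_{s,t}$. By definition $\sigma_{s,t}(f,Q) = d_{s,t}(f,Q) + (M_s(f,Q) - M_s(f,\widehat Q))_+ + (M_t(f,\widehat Q) - M_t(f,Q))_+$. The first term is $\leq \|f\|_{s,t}$ by definition. For the parent-child terms I would use the doubling-type comparison of medians on a cube and its dyadic parent: since $|Q| = 2^{-n}|\widehat Q|$, one has for $0 < u < 1$ the inequalities $M_{2^{-n}u}(f,\widehat Q) \leq M_u(f,Q) \leq M_{1 - 2^{-n}(1-u)}(f,\widehat Q)$ (because a set inside $Q$ occupying proportion $\leq u$ of $Q$ occupies proportion $\leq 2^{-n}u$ of $\widehat Q$, etc.). Combining these with \subref{prop:medians}{prop:medians_2}, both $(M_s(f,Q) - M_s(f,\widehat Q))_+$ and $(M_t(f,\widehat Q) - M_t(f,Q))_+$ are controlled by $M_{t'}(f,\widehat Q) - M_{s'}(f,\widehat Q)$ for suitable $0 < s' < t' < 1$ depending only on $s,t,n$, hence by $\|f\|_{s',t'}$. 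Then a further elementary observation — that $\|f\|_{s',t'} \lesssim_{s,t,n}\|f\|_{s,t}$ for any two pairs with $0 < s < t < 1$ and $0 < s' < t' < 1$, obtained by chaining a bounded number of medians $M_{s} \leq M_{s_1} \leq \cdots \leq M_{t}$ across the cube and using that consecutive gaps are controlled — closes the loop. This gives $\dashint_R|f - M_s(f,R)|\,dx \lesssim_{s,t,n}\|f\|_{s,t}$, and replacing $M_s(f,R)$ by $\ang{f}_R$ costs only a factor of $2$, proving $f \in BMO$ with the claimed norm bound.

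For part~\eqref{thm:BMO_char_BLO}, i.e.\ $s = 0$: the decomposition still holds with $M_0(f,R) = \einf_R f$, and now $\sigma_{0,t}(f,Q) = d_{0,t}(f,Q) + (\einf_Q f - \einf_{\widehat Q} f)_+ + (M_t(f,\widehat Q) - M_t(f,Q))_+$. Since $\einf_Q f \geq \einf_{\widehat Q} f$ always, the middle term vanishes, and the remaining two are bounded by $M_{t'}(f,\widehat Q) - \einf_{\widehat Q} f \leq \|f\|_{0,t'} \lesssim_{t} \|f\|_{0,t}$ exactly as above (using $M_t(f,Q) \geq M_{2^{-n}t}(f,\widehat Q) \geq \einf_{\widehat Q} f$). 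Integrating the decomposition and using sparseness gives $\dashint_R [f - \einf_R f]\,dx \lesssim_{t,n}\|f\|_{0,t}$, hence $f \in BLO$. The reverse bound $\|f\|_{0,t} \lesssim_t \|f\|_{BLO}$ is the second half of \subref{prop:medians}{prop:medians_8}.

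The part I expect to be the main obstacle is the parent–child median comparison and the passage between different pairs $(s,t)$: one must be careful that the hypothesis is stated for a \emph{fixed} pair $(s,t)$ while the sparse decomposition naturally produces median differences over dyadic parents at slightly shifted parameters. The fix is the uniform comparability $\|f\|_{s,t} \approx_{s,t,s',t',n} \|f\|_{s',t'}$ for all admissible pairs, which should itself be proved first as a lemma (by inserting $O_n(1)$ intermediate medians and telescoping), after which everything reduces to bookkeeping with sparseness.
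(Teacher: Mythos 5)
There is a genuine gap, and it sits exactly at the point you flagged as "the main obstacle." Your plan is to bound the parent--child terms $(M_s(f,Q)-M_s(f,\widehat Q))_+$ and $(M_t(f,\widehat Q)-M_t(f,Q))_+$ by a median difference $d_{s',t'}(f,\widehat Q)$ at \emph{shifted} parameters (indeed, the comparison $M_u(f,Q)\le M_{1-2^{-n}(1-u)}(f,\widehat Q)$ only yields $d_{s,\,1-2^{-n}(1-s)}(f,\widehat Q)$, whose upper parameter is close to $1$ and in general exceeds $t$), and then to invoke the "elementary observation" that $\|f\|_{s',t'}\lesssim_{s,t,s',t',n}\|f\|_{s,t}$ for all admissible pairs. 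That observation is true, but it is essentially equivalent to the theorem you are proving (once $f\in BMO$ it follows from \subref{prop:medians}{prop:medians_8}), and your proposed proof of it --- inserting intermediate medians $M_{s}\le M_{s_1}\le\cdots\le M_{t'}$ on a \emph{single} cube and "using that consecutive gaps are controlled" --- does not work: a consecutive gap $M_{s_{i+1}}(f,Q)-M_{s_i}(f,Q)$ with $[s_i,s_{i+1}]\not\subset[s,t]$ is a median difference at a different parameter pair on the \emph{same} cube, and nothing in the hypothesis controls it there. The counterexample of \cref{sec:dyadic_example} shows concretely that no single-scale (in particular no dyadic) argument can close this loop: one can have $d_{s,t}(f,Q)=0$ for every dyadic $Q$ while $d_{s',t'}(f,Q)$ is unbounded. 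So the reduction is circular as written.

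The correct way to control these terms --- and the step your proposal is missing --- is the multi-scale comparison of \cref{lem:BMO_char}: if $P\subset P'$ with $|P|>\tfrac{1-t}{1-s}|P'|$ then $M_s(f,P)\le M_t(f,P')$, so interpolating a chain of $k(s,t,n)$ \emph{non-dyadic} cubes $Q=Q_0\subset Q_1\subset\cdots\subset Q_k=\widehat Q$ with volume ratios below $r^{-1}$, $r=\max(\tfrac{1-t}{1-s},\tfrac{s}{t})$, gives $M_s(f,Q)-M_s(f,\widehat Q)\le\sum_{j}d_{s,t}(f,Q_j)\le k\,\|f\|_{s,t}$, where each summand is at the \emph{original} parameters $(s,t)$ but on a new cube (legitimate, since $\|f\|_{s,t}$ is a supremum over all cubes). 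This is precisely how the paper passes from the dyadic decomposition (\cref{thm:median_decomp_general}) to the non-dyadic one (\cref{thm:median_decomp}), whose summands $d_{s,t}(f,Q)$ are then bounded by $\|f\|_{s,t}$ termwise before integrating; the paper proves \cref{thm:BMO_char} from the latter. Your treatment of the easy direction, the integration-against-sparseness step, and the $BLO$ case (where the middle term of $\sigma^+$ indeed vanishes, though the term $(M_t(f,\widehat Q)-M_t(f,Q))_+$ still needs the same non-dyadic chain) are all fine; the missing ingredient is \cref{lem:BMO_char} and the non-dyadic intermediate cubes it requires.
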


\begin{remark}
    All of our results have extensions to the case when $f$ is complex-valued. I.e., we can define
    \[M_s(f, Q) := M_s(\text{Re} \, f, Q) + i M_s(\text{Im} \, f, Q)\]
    and then state \cref{thm:median_decomp_general}, \cref{thm:median_decomp}, and \cref{thm:BMO_char} with this definition. However, to simplify notation, we always work with real-valued functions.
\end{remark}

\begin{remark}
    The family $\mc{S}$ given by \cref{thm:median_decomp} is not necessarily dyadic. In fact, the theorem fails for certain $s$ and $t$ if $\mc{S}$ is replaced with a dyadic family. Indeed, assume that $\mc{S} \subset \mc{D}([0, 1])$ is \emph{any} dyadic family, let $0 < s < t < 1/2$, and set $f(x) = \chi_{[1/2, 1]} - \chi_{[0, 1/2]}$. Clearly
    \[M_t(f, Q) - M_s(f, Q) = 0\]
    for any $Q \subsetneq [0, 1]$ since $f$ is constant on any of these intervals. If $Q = [0, 1]$ then $M_s(f, Q) = M_t(f, Q) = -1$ so
    \[M_t(f, [0,1]) - M_s(f, [0,1]) = 0\]
    as well. Thus 
    \[\sum_{Q \in \mc{S}}[M_t(f, Q) - M_s(f, Q)]\chi_Q(x) = 0\]
    for any family $\mc{S} \subset \mc{D}(Q_0)$, let alone a sparse one. The problem is that the median differences don't detect the jump in $f$. Given this, it seems reasonable to conjecture that \cref{thm:BMO_char} fails as well in the dyadic setting. We show later that this is the case (see \cref{sec:dyadic_example}). 
\end{remark}

Now we show that \cref{thm:median_decomp} implies \cref{thm:BMO_char}.

\begin{proof}[Proof of \cref{thm:BMO_char}]
    We start by proving \eqref{thm:BMO_char_BMO}.  The ``only if'' implication and bound $\|f\|_{s,t} \lesssim_{s,t} \|f\|_{BMO}$ is Proposition \subref{prop:medians}{prop:medians_8}. For the ``if" implication, fix a cube $Q_0$, integrate the inequality given by \cref{thm:median_decomp} over $Q_0$, and use \cref{rmk:sparse_carleson}:
    \[\int_{Q_0}|f - M_s(f, Q_0)| \, dx \leq \|f\|_{s, t}\sum_{Q \in \mc{S}}|Q| \leq \eta^{-1}\|f\|_{s, t}|Q_0|.\]
    The result follows since $\eta$ depends only on $s, t$, and $n$, using that $\| f \|_{BMO} \approx \sup_Q \inf_{a \in \C} \dashint_Q |f(x) - a| $. For \eqref{thm:BMO_char_BLO}, again the ``only if'' implication and bound $\|f\|_{0, t} \lesssim_{t} \|f\|_{BLO}$ follow from Proposition \subref{prop:medians}{prop:medians_8}. For the converse, choose $s_0$ so that $0 < s_0 < t$, e.g. $s_0 = \frac{t}{2}$. Then by Proposition $\subref{prop:medians}{prop:medians_2}$ used twice, and part (1), and recalling that $\einf_{Q_0}f = M_0(f, Q_0)$,
    \begin{align*}
        \int_{Q_0}(f - \einf_{Q_0} f) \, dx & \leq \int_{Q_0}|f - M_{s_0}(f, Q_0)| \, dx + [ M_t(f, Q_0) - \einf_{Q_0}f ]|Q_0| \\
        & \lesssim_{t, n} \|f\|_{s_0, t}|Q_0| + [ M_t(f, Q_0) - \einf_{Q_0}f] |Q_0| \\
        & \lesssim _{t, n} \|f\|_{0, t}|Q_0|
    \end{align*}
    where in the second inequality, we used the conclusion of \eqref{thm:BMO_char_BMO}. Therefore $\|f\|_{BLO} \lesssim_{t, n} \|f\|_{0, t}$.
\end{proof}

\section{Comparison to Previous Work}\label{sec:history}

In this section, we discuss how \cref{thm:median_decomp} is an improvement of \cref{thm:local_decomp} and how \cref{thm:BMO_char} is related to a classical result of John. Consider the local mean oscillation term 
\[\omega_a(f, Q) = \inf_{c}(\chi_Q(f - c)^*)(a |Q|)\]
for $a \leq 1/2$. First, using by Proposition 1.2 in \cite{Poelhuis2012}, we can rewrite $\omega_a(f, Q)$ in terms of medians:
\begin{equation}\label{eqn:median_equality}
    \omega_a(f, Q) = \inf_{c}M_{1 - a}(|f - c|, Q).
\end{equation}
The following proposition summarizes the relationship between $\omega_a(f, Q)$ and the median differences $d_{s,t}(f, Q)$.
\begin{proposition}\label{prop:median_compare}
    Let $0 < s < t < 1$.
        We have
        \[\omega_{a}(f, Q) \lesssim d_{s, t}(f, Q) \lesssim \omega_{a'}(f, Q)\]
        for $a = 1 - (t - s)$ and $a' = \min(1 - t, s)$.
\end{proposition}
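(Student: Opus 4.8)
\emph{Proof proposal.} The plan is to run the whole argument through the median reformulation of the local mean oscillation, $\omega_a(f,Q)=\inf_c M_{1-a}(|f-c|,Q)$, recorded in \eqref{eqn:median_equality} (which, together with the definition of $\omega_a$, extends verbatim to all $a\in(0,1)$, in particular to the value $a=1-(t-s)$ that may exceed $1/2$). In this form each half of the claimed chain becomes a comparison between a super-level set of $|f-c|$ and the two sets $\{f>M_t(f,Q)\}$ and $\{f<M_s(f,Q)\}$, whose measures are at most $(1-t)|Q|$ and $s|Q|$ by Proposition \subref{prop:medians}{prop:medians_1}. The one point to be careful about is to feed in thresholds lying \emph{strictly} above the relevant medians: for such a threshold the distribution function is already \emph{strictly} past the corresponding fraction of $|Q|$, directly from the supremum in the definition of the median, and this keeps every inequality below from degenerating at the boundary between strict and non-strict inequalities.

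For the left inequality $\omega_a(f,Q)\lesssim d_{s,t}(f,Q)$, where $1-a=t-s$, I would centre at $c=M_s(f,Q)$. Fix any $\lambda>d_{s,t}(f,Q)$ and set $\mu=c+\lambda$; since $\lambda>M_t(f,Q)-M_s(f,Q)$ we have $\mu>M_t(f,Q)$, so $|\{f<\mu\}|>t|Q|$ by the definition of $M_t(f,Q)$ as a supremum, while $|\{f\le c-\lambda\}|\le|\{f<M_s(f,Q)\}|\le s|Q|$ by Proposition \subref{prop:medians}{prop:medians_1} (using $c-\lambda<M_s(f,Q)$). Since $\{|f-c|<\lambda\}=\{c-\lambda<f<\mu\}$, subtracting gives $|\{|f-c|<\lambda\}|>(t-s)|Q|=(1-a)|Q|$; hence $M_{1-a}(|f-c|,Q)\le d_{s,t}(f,Q)$, and taking the infimum over centres, $\omega_a(f,Q)\le d_{s,t}(f,Q)$ — in fact with constant $1$.

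For the right inequality $d_{s,t}(f,Q)\lesssim\omega_{a'}(f,Q)$, where $1-a'=\max(t,1-s)$ (equivalently $a'=\min(1-t,s)$), I would fix an arbitrary centre $c$ and any $\Lambda>M_{\max(t,1-s)}(|f-c|,Q)$. Then $|\{|f-c|<\Lambda\}|>\max(t,1-s)|Q|$, so $|\{|f-c|\ge\Lambda\}|<(1-\max(t,1-s))|Q|=\min(1-t,s)|Q|$. Since $\{f<c-\Lambda\}\subseteq\{|f-c|\ge\Lambda\}$ has measure $<s|Q|$, the value $c-\Lambda$ lies in the defining set of $M_s(f,Q)$, whence $M_s(f,Q)\ge c-\Lambda$; since $\{f>c+\Lambda\}\subseteq\{|f-c|\ge\Lambda\}$ has measure $<(1-t)|Q|$, Proposition \subref{prop:medians}{prop:medians_9} gives $M_t(f,Q)\le c+\Lambda$. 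Subtracting yields $d_{s,t}(f,Q)\le 2\Lambda$; letting $\Lambda\downarrow M_{\max(t,1-s)}(|f-c|,Q)$ and then taking the infimum over $c$ gives $d_{s,t}(f,Q)\le 2\,\omega_{a'}(f,Q)$.

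I do not expect a substantial obstacle. The only delicate point, such as it is, is precisely the strict-versus-non-strict distinction in the definition of $M_s(\cdot,Q)$: a naive argument using a super-level set of measure \emph{equal to} $a|Q|$ (or $(1-t)|Q|$) does not pin down the median and can fail for a measure-zero set of thresholds; the remedy — always taking $\lambda$, respectively $\Lambda$, strictly past the median, as arranged above — removes this entirely. Everything else is routine bookkeeping with Proposition \subref{prop:medians}, and the only use made of the hypothesis is that $t-s\in(0,1)$.
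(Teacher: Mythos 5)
Your proposal is correct and follows essentially the same route as the paper: both halves go through the identity $\omega_a(f,Q)=\inf_c M_{1-a}(|f-c|,Q)$, the left inequality by centering at $c=M_s(f,Q)$ and estimating the distribution function of $|f-c|$ via Proposition \subref{prop:medians}{prop:medians_1} and \subref{prop:medians}{prop:medians_9}, and the right inequality by showing $M_s(f,Q)\geq c-\Lambda$ and $M_t(f,Q)\leq c+\Lambda$ for $\Lambda$ just above $M_{\max(t,1-s)}(|f-c|,Q)$. The only cosmetic difference is that the paper packages this last step as $M_t(f-c,Q)-M_s(f-c,Q)\leq M_t(|f-c|,Q)+M_{1-s}(|f-c|,Q)\leq 2M_{\max(t,1-s)}(|f-c|,Q)$ using Proposition \subref{prop:medians}{prop:medians_5} and \subref{prop:medians}{prop:medians_10}, whereas you prove the same bounds directly from the level-set definitions; your care with strict versus non-strict thresholds is exactly the right precaution and matches the paper's use of $\lambda=d_{s,t}(f,Q)+\epsilon$.
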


\begin{proof}
    For the first inequality, let $\lambda = d_{s,t}(f, Q) + \epsilon$ for $\epsilon > 0$. We have 
    \begin{align*}
        |\{x \in Q : & |f(x) - M_s(f, Q)|  > \lambda\}| \\ 
        & = |\{x \in Q : f(x) > M_s(f, Q) + \lambda\}| + |\{x \in Q : f(x) < M_s(f, Q) - \lambda\}| \\
        & \leq |\{x \in Q : f(x) > M_t(f, Q) + \epsilon\}| + |\{x \in Q : f(x) < M_s(f, Q)\}| \\
        & < (1 - (t - s))|Q|
    \end{align*}
    where in the last line we used Proposition \subref{prop:medians}{prop:medians_9}. By the same proposition, we have that 
    \[M_{t - s}(|f - M_s(f, Q)|, Q) \leq \lambda = d_{s,t}(f, Q) + \epsilon.\]
    This implies that 
    \[\omega_{a}(f, Q) \leq M_{t - s}(|f - M_s(f, Q)|, Q) \leq d_{s, t}(f, Q)\]
    where the first inequality holds by \cref{eqn:median_equality} as wanted. For the second inequality, we have by Propositions \subref{prop:medians}{prop:medians_3} and \subref{prop:medians}{prop:medians_4} and \subref{prop:medians}{prop:medians_5},
    \begin{align*}
        d_{s,t}(f, Q) & = M_t(f, Q) - M_s(f, Q) \\
        & = M_t(f - c, Q) - M_s(f - c, Q) \\
        & \leq M_t(|f - c|, Q) + M_{1-s}(|f - c|, Q) \\
        & \leq 2M_{\max(t, 1 - s)}(|f - c|, Q)
    \end{align*}
    where the first inequality holds by Proposition \subref{prop:medians}{prop:medians_4} and \subref{prop:medians}{prop:medians_5} and the second one holds by Proposition \subref{prop:medians}{prop:medians_2}. Taking the infimum over all $c$ and applying \eqref{eqn:median_equality} concludes the proof of the proposition.
\end{proof}
Therefore no matter what values $s$ and $t$ we choose, there is some value $a$ so that the median difference is dominated above or below by the local mean oscillation. However, $\omega_a(f, Q)$ is only useful for $a \leq 1/2$ by the next theorem. \par 
The classical results of \cite{john1962} and \cite{stromberg1979} (see also T. Anderson's Ph.D. thesis \cite[pp.30-31]{AndersonPhDThesis}) say that 

\begin{theorem}\label{thm:stromberg}
    Let $f$ be a measurable function and assume that there exists a constant $\lambda$ such that for every cube $Q$, there exists a constant $c_Q$ such that 
    \[|\{x \in Q : |f(x) - c_Q| > \lambda\}| < s|Q|\]
    where $s \leq 1/2$, then $f \in BMO$. If $s > 1/2$ then the conclusion fails.
\end{theorem}
It is clear that the hypothesis of this theorem is equivalent to 
\[\sup_{Q}\omega_a(f, Q) < \infty\]
for $a = s$. Therefore if $t - s > 1/2$, \cref{thm:BMO_char}(1) will follow from \cref{prop:median_compare}. However, \cref{thm:BMO_char} says that $d_{s,t}(f, Q)$ characterizes $BMO$ for \emph{any} $0 < s < t < 1$ which is not implied by \cref{thm:stromberg}. \par 
On the other hand, by \cref{thm:median_decomp}, we have the sparse bound 
\[ |f(x) - M_s(f, Q_0)| \lesssim_{s, t, n} \sum_{Q \in \mc{S}}d_{s,t}(f, Q)\chi_Q(x)\]
for any choice of $0 < s < t < 1$. By \cref{prop:median_compare}, for any  $a < 1/2$, we can choose $s = 1/2$ and $t = 1 - a$ so that $a = \min(1 - t, s)$. 
Then  by the previous sparse bound,
\[|f(x) - M(f, Q_0)| \lesssim_{a, n} \sum_{Q \in \mc{S}}\omega_{a}(f, Q)\chi_Q(x)\] 
which is valid for any choice of $a < 1/2$. This shows that \cref{thm:median_decomp} generalizes \cref{thm:local_decomp} (however, our proof of \cref{thm:median_decomp} is inspired by the proof of \cref{thm:local_decomp}). For the purpose of this paper, \cref{thm:median_decomp} is only used as a step to prove \cref{thm:BMO_char} (i.e. the case where the $d_{s,t}(f, Q)$ are uniformly bounded). However, due to the importance of \cref{thm:local_decomp} to recent advancements in the subject of weighted norm inequalities, we believe that \cref{thm:median_decomp} can have applications elsewhere.

\begin{remark}\label{rem:AllParametersInMedianFormula}
    At first sight, by \cref{prop:median_compare}, it may look like \cref{thm:median_decomp} is only a mild generalization of \cref{thm:local_decomp}, in that we allow for more values of the parameter $a$, and get two parameters ($s$ and $t$) instead of a single one ($a$), which might seem not a substantial improvement. 
    
    However, the appearance of the two parameters $s$ and $t$ in \cref{thm:median_decomp} and subsequent theorems, instead of one single parameter $a$, has far reaching consequences down the road. Indeed, having two parameters at our disposal is {\underline{absolutely crucial}} to be able to define median porous sets (Definition \ref{DefMedianPorousSets}), which is what precisely characterizes when $ \dist(\cdot, E)^{-\alpha} \in A_\infty$ (\cref{thm:BMO_char_dist}).

    Moreover, the fact that the range of both parameters $s$ and $t$ is $(0,1)$ as opposed to a strict subset of $(0,1)$ (as would be the case, e.g. with the restriction $t - s > 1/2$, when deriving \cref{thm:BMO_char}(1) from \cref{prop:median_compare}, or with the parameter $a = 2^{-n-2}$ in \cref{thm:local_decomp}) will allow for substantial further flexibility when checking that a set is median porous, since one can then choose any value of the parameter $s \in (0,1)$ for that purpose (see \cref{rem:AnyValueOfSWorksForDefinitionOfMedianPorous}). Indeed, it is not hard to see in concrete examples that some values of the parameter $s$ make checking median porosity much easier than other such values (see e.g. \cref{sec:example}).
    
\end{remark}

The main result of \cite{IgnacioGomezVargas_ApCharacterization2025} can be seen as a subset of our results. More specifically, it would follow from the $t - s > 1/3$ case when $n = 1$ and the $t - s > 1/2$ case when $n > 1$. Therefore, when $n > 1$, it would also follow from the classical result of John (\cite{john1962}) by the reasoning discussed in this section. We will give a detailed comparison at the end of \cref{sec:subsets} when we focus our attention on distance functions. Note however, as mentioned above, that \cite{IgnacioGomezVargas_ApCharacterization2025} uses a more geometric argument for the proof of the main result, and has a probability interpretation for computing certain examples, which is of independent interest.

\section{Proof of Theorems \ref{thm:median_decomp} and \ref{thm:median_decomp_general}}\label{sec:median_proofs}

In this section, we prove Theorem \ref{thm:median_decomp} and its dyadic version, Theorem \ref{thm:median_decomp_general}. To prove \cref{thm:median_decomp} from \cref{thm:median_decomp_general}, we will need some way to compare medians on cubes of different sizes. The following lemma will allow us to do this in terms of the median differences $d_{s,t}(f, Q)$. We remark that the following lemma, somewhat inspired by \cite{anderson2022weakly}, relies heavily on the doubling property of the Lebesgue measure.
\begin{lemma}\label{lem:BMO_char}
    Let $0 \leq s < t < 1$.
    \begin{enumerate}
        \item \label{lem:BMO_char_1} Let $P \subset P'$ be cubes. If $|P| > (\frac{1-t}{1-s})|P'|$ then \[M_s(f, P) \leq M_t(f, P')\]
        and if $|P| \geq (\frac{s}{t})|P'|$ then \[M_s(f, P') \leq M_t(f, P)\]
        \item \label{lem:BMO_char_2}  Let $Q \subset Q'$ be cubes with $|Q'| = 2^n|Q|$ (note $Q$ need not be a dyadic child of $Q'$). Then there is some natural number $k = k(n, s , t)$ and cubes
        \[Q = Q_0 \subset Q_1 \subset \cdots \subset Q_{k-1} \subset Q_k = Q'\]
        so that for $r = \max(\frac{1-t}{1-s}, \frac{s}{t}) < 1$,
        \[|Q_j| < r^{-1}|Q_{j-1}|\]
        for all $1 \leq j \leq k$ and 
        \[\max(M_s(f, Q) - M_s(f, Q'), M_t(f, Q') - M_t(f, Q)) \leq \sum_{j = 1}^{k}[M_t(f, Q_j) - M_s(f, Q_j)].\]
    \end{enumerate}
\end{lemma}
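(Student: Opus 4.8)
The plan is to establish part (1) first and then derive part (2) from it by chaining part (1) along a sequence of cubes that interpolates between $Q$ and $Q'$. The engine behind (1) is the simple observation that for nested cubes $P\subset P'$ the level sets $\{x\in P:f(x)>\lambda\}$ and $\{x\in P':f(x)>\lambda\}$ are nested, and their measures are controlled by the medians via Proposition \subref{prop:medians}{prop:medians_1}.

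For the first inequality in (1) I would argue by contradiction: assume $M_s(f,P)>M_t(f,P')$ and fix $\lambda$ strictly between them. Since $\lambda<M_s(f,P)$, Proposition \subref{prop:medians}{prop:medians_1} gives $|\{x\in P:f(x)\le\lambda\}|\le s|P|$, hence $|\{x\in P:f(x)>\lambda\}|\ge(1-s)|P|$; since $\lambda>M_t(f,P')$, the same proposition gives $|\{x\in P':f(x)>\lambda\}|\le(1-t)|P'|$. Because $P\subset P'$, the first quantity is $\le$ the second, so $(1-s)|P|\le(1-t)|P'|$, contradicting $|P|>\tfrac{1-t}{1-s}|P'|$. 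The second inequality is the mirror statement using lower level sets: from $M_s(f,P')>M_t(f,P)$ and $\lambda$ between them one extracts $|\{x\in P':f(x)<\lambda\}|\le s|P'|$ and $|\{x\in P:f(x)<\lambda\}|>t|P|$, and the inclusion $\{x\in P:f(x)<\lambda\}\subset\{x\in P':f(x)<\lambda\}$ forces $t|P|<s|P'|$, contradicting $|P|\ge\tfrac{s}{t}|P'|$. The degenerate case $s=0$ is handled separately and trivially, since then $M_s=\einf$ and both assertions reduce to $\einf_{P'}f\le\einf_{P}f\le M_t(f,P)$.

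For part (2) I would first produce an increasing family of cubes $\{R_\theta\}_{\theta\in[1,2]}$ with $l(R_\theta)=\theta\,l(Q)$, $R_1=Q$, $R_2=Q'$, and $Q\subset R_\theta\subset Q'$ throughout. Since the statement is invariant under translations and dilations, we may assume $Q=[0,1)^n$ and $Q'=\prod_i[c_i,c_i+2)$ with $c_i\in[-1,0]$; then $R_\theta:=\prod_i\bigl[\max(1-\theta,c_i),\,\max(1-\theta,c_i)+\theta\bigr)$ works, because its left endpoints $\max(1-\theta,c_i)$ are non-increasing in $\theta$ and its right endpoints $\max(1,c_i+\theta)$ are non-decreasing, and one checks directly that $R_1=Q$, $R_2=Q'$, $Q\subset R_\theta\subset Q'$. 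Next let $k=k(n,s,t)$ be the least positive integer with $2^{n/k}<r^{-1}$ (this exists since $r<1$) and set $Q_j:=R_{2^{j/k}}$, so $Q_0=Q$, $Q_k=Q'$, the $Q_j$ are nested, and $|Q_j|/|Q_{j-1}|=2^{n/k}<r^{-1}$. Thus $|Q_{j-1}|>r|Q_j|\ge\max\bigl(\tfrac{1-t}{1-s},\tfrac{s}{t}\bigr)|Q_j|$, so part (1) with $P=Q_{j-1}\subset Q_j=P'$ yields both $M_s(f,Q_{j-1})\le M_t(f,Q_j)$ and $M_s(f,Q_j)\le M_t(f,Q_{j-1})$, i.e.\ for each $1\le j\le k$,
\[
M_s(f,Q_{j-1})-M_s(f,Q_j)\le d_{s,t}(f,Q_j),\qquad M_t(f,Q_j)-M_t(f,Q_{j-1})\le d_{s,t}(f,Q_j).
\]
Summing the first inequalities telescopes to $M_s(f,Q)-M_s(f,Q')\le\sum_{j=1}^k d_{s,t}(f,Q_j)$, summing the second gives $M_t(f,Q')-M_t(f,Q)\le\sum_{j=1}^k d_{s,t}(f,Q_j)$, and taking the maximum finishes the proof.

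The main obstacle is the bookkeeping in part (1): one has to keep the strict-versus-non-strict directions of the distribution-function estimates aligned so that they combine to exactly the thresholds $\tfrac{1-t}{1-s}$ and $\tfrac{s}{t}$, which are the sharp ratios coming from the two one-sided estimates (and one must not forget the $s=0$ special case, where Proposition \subref{prop:medians}{prop:medians_7} is unavailable). Once (1) is in place, part (2) is essentially mechanical: the only geometric input is the existence of the interpolating family $\{R_\theta\}$, after which two telescoping sums finish the job, with the value of $k$ forced by the requirement $2^{n/k}<r^{-1}$.
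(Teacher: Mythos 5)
Your proof is correct and follows essentially the same route as the paper: part (1) is the distribution-function comparison of level sets (the paper phrases it directly via Proposition \subref{prop:medians}{prop:medians_9} rather than by contradiction, and merely asserts the existence of the interpolating chain of cubes that you construct explicitly), and part (2) is obtained by telescoping part (1) along that chain. One small caveat: your parenthetical treatment of $s=0$ is off, since the chain $\einf_{P'}f\le\einf_{P}f\le M_t(f,P)$ proves only the second assertion of (1) and not the first one ($\einf_P f\le M_t(f,P')$), but this is harmless because your main contradiction argument uses only Proposition \subref{prop:medians}{prop:medians_1} and remains valid verbatim when $s=0$.
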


\begin{proof}
    For \eqref{lem:BMO_char_1}, we have
    \begin{align*}
        |\{x \in P : f(x) > M_t(f, P')\}| & \leq |\{x \in P' : f(x) > M_t(f, P')\}| \\
        & \leq (1 - t)|P'| \\
        & < (1-s)|P|
    \end{align*}
    It follows from Proposition \subref{prop:medians}{prop:medians_9} that $M_s(f, P) \leq M_t(f, P')$. The second assertion is proved in a similar way using Proposition \subref{prop:medians}{prop:medians_1} and \cref{def:DefUpperSMedianValue}. \par
    Now we prove \eqref{lem:BMO_char_2}. Let 
    \[Q = Q_0 \subset \cdots \subset Q_k = Q'\] 
    be a sequence of cubes so that $|Q_{j-1}| > \max(\frac{1-t}{1-s}, \frac{s}{t})|Q_j|$ for each $j$. Note we can take 
    \[k \lesssim_{n} \left(\log \min\left(\frac{1-s}{1-t}, \frac{t}{s}\right)\right)^{-1}.\] 
    Then by \eqref{lem:BMO_char_1}, 
    \[M_s(f, Q) - M_s(f, Q') = \sum_{j = 1}^{k}[M_s(f, Q_{j-1}) - M_s(f, Q_j)] \leq \sum_{j = 1}^{k}[M_t(f, Q_j) - M_s(f, Q_j)]\]
    and similarly 
    \[M_t(f, Q') - M_t(f, Q) \leq \sum_{j = 1}^{k}[M_t(f, Q_j) - M_s(f, Q_j)]\]
    as wanted.
\end{proof}

The idea to prove \cref{thm:median_decomp_general} is to handle the upper oscillation and lower oscillations of $f$ separately. The next proposition is a pointwise sparse bound on the upper oscillations of $f$. A similar result can be proved for the lower oscillations.

\begin{proposition}\label{prop:upper_osc}
    Let $f$ be a real-valued measurable function on a cube $Q_0$ and let $0 < s < t < 1$. Then there is an $\eta^+$-sparse family $\mc{F}^+ \subset \mc{D}(Q_0)$ so that
    \begin{enumerate}
        \item $M_s(f, Q) \geq M_s(f, \widehat{Q})$ for every $Q \in \mc{F}^+$
        \item \[[f(x) - M_s(f, Q_0)]_+ \leq \sum_{Q \in \mc{F}^+}[M_t(f, Q) - M_s(f, \widehat{Q})]\chi_Q(x) = \sum_{Q \in \mc{F}^+}\sigma_{s,t}^+(f, Q)\chi_Q(x)\]
        a.e. on $Q_0$ where $\eta^+$ and depends only on $s,t$, and $n$.
    \end{enumerate}
\end{proposition}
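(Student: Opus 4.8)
The plan is a Calder\'on--Zygmund stopping-time construction adapted to medians, in the spirit of Lerner's and Hyt\"onen's local mean oscillation decomposition (\cref{thm:local_decomp}). I build $\mc{F}^+ \subset \mc{D}(Q_0)$ generation by generation: the zeroth generation is $\{Q_0\}$, and if $Q$ lies in the $k$-th generation, its \emph{children} are the maximal cubes $Q' \in \mc{D}(Q)$ with $Q' \ne Q$ and $M_s(f,Q') > M_t(f,Q)$; these children form the $(k{+}1)$-st generation, and $\mc{F}^+$ is the union of all generations. Since $M_s(f,Q) \le M_t(f,Q)$ always (Proposition~\subref{prop:medians}{prop:medians_2}), no cube is a child of itself, the children of a fixed $Q$ are pairwise disjoint proper dyadic subcubes of $Q$, and — by the standard argument — cubes of a fixed generation are pairwise disjoint, so $\mc{F}^+$ carries the usual tree structure. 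Condition (1) follows from maximality: if $Q'$ is a child of $Q$ then $Q' \subsetneq \widehat{Q'} \subseteq Q$, and either $\widehat{Q'} = Q$, whence $M_s(f,\widehat{Q'}) = M_s(f,Q) \le M_t(f,Q) < M_s(f,Q')$, or $\widehat{Q'} \subsetneq Q$ is a proper dyadic subcube of $Q$ strictly containing the maximal cube $Q'$, so $M_s(f,\widehat{Q'}) \le M_t(f,Q) < M_s(f,Q')$; in either case $M_s(f,Q') \ge M_s(f,\widehat{Q'})$ (the cube $Q_0$ being trivial since $\widehat{Q_0} = Q_0$). Consequently $\sigma^+_{s,t}(f,Q) = d_{s,t}(f,Q) + (M_s(f,Q) - M_s(f,\widehat{Q})) = M_t(f,Q) - M_s(f,\widehat{Q}) \ge 0$ for every $Q \in \mc{F}^+$, so the two sums in (2) agree term by term and are sums of nonnegative quantities.

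For sparseness, set $E_Q := Q \setminus \bigcup\{Q' : Q' \text{ a child of } Q\}$. The tree structure makes the sets $\{E_Q\}_{Q \in \mc{F}^+}$ pairwise disjoint (any $\mc{F}^+$-cube properly contained in $Q$ lies inside a child of $Q$). To bound $|E_Q|$ from below, observe that each child $Q'$ of $Q$ has, by Proposition~\subref{prop:medians}{prop:medians_1}, $|\{y \in Q' : f(y) \ge M_s(f,Q')\}| \ge (1-s)|Q'|$, and since $M_s(f,Q') > M_t(f,Q)$ this set is contained in $\{y \in Q' : f(y) > M_t(f,Q)\}$; summing over the disjoint children and using $|\{y \in Q : f(y) > M_t(f,Q)\}| \le (1-t)|Q|$ (Proposition~\subref{prop:medians}{prop:medians_1} again) gives $\sum_{Q'}|Q'| \le \tfrac{1-t}{1-s}|Q|$, hence $|E_Q| \ge \tfrac{t-s}{1-s}|Q|$. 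Thus $\mc{F}^+$ is $\eta^+$-sparse with $\eta^+ = \tfrac{t-s}{1-s}$, which depends only on $s$ and $t$.

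It remains to prove the pointwise inequality. Fix $x$ outside the Lebesgue-null exceptional set of Proposition~\subref{prop:medians}{prop:medians_7} (applied with the parameters $s$ and $t$). The $\mc{F}^+$-cubes containing $x$ form a chain $Q_0 = R_0 \supsetneq R_1 \supsetneq \cdots$ of nested dyadic cubes, and the tree structure forces each $R_i$ with $i \ge 1$ to be a child of $R_{i-1}$. The key observation is that $M_s(f,\widehat{R_i}) \le M_t(f,R_{i-1})$ for $i \ge 1$: indeed $R_i \subsetneq \widehat{R_i} \subseteq R_{i-1}$, and either $\widehat{R_i} = R_{i-1}$, or $\widehat{R_i}$ is a proper dyadic subcube of $R_{i-1}$ strictly containing the maximal cube $R_i$, so in either case the inequality holds. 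Setting $a_i := M_t(f,R_i)$ and using the identity $M_t(f,R_i) - M_s(f,\widehat{R_i}) = \sigma^+_{s,t}(f,R_i)$ from above together with the key observation, we get $a_i = \sigma^+_{s,t}(f,R_i) + M_s(f,\widehat{R_i}) \le \sigma^+_{s,t}(f,R_i) + a_{i-1}$; since $a_0 = M_t(f,Q_0) = M_s(f,Q_0) + \sigma^+_{s,t}(f,Q_0)$, iterating yields $a_m \le M_s(f,Q_0) + \sum_{i=0}^{m}\sigma^+_{s,t}(f,R_i)$ for all $m$. It remains to compare $f(x)$ with $a_N$ (or $\lim_i a_i$). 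If the chain is finite, say $R_0 \supsetneq \cdots \supsetneq R_N$, then $x$ lies in no child of $R_N$, hence $M_s(f,Q'') \le M_t(f,R_N)$ for every dyadic $Q'' \ni x$ with $Q'' \subsetneq R_N$ (otherwise $Q''$ would sit inside a child of $R_N$ containing $x$); letting $Q'' \downarrow x$, Proposition~\subref{prop:medians}{prop:medians_7} gives $f(x) \le M_t(f,R_N) = a_N$. If the chain is infinite, then $R_i \downarrow x$ and Proposition~\subref{prop:medians}{prop:medians_7} gives $f(x) = \lim_i M_t(f,R_i) = \lim_i a_i$. In both cases $f(x) - M_s(f,Q_0) \le \sum_{Q \in \mc{F}^+ :\, x \in Q}\sigma^+_{s,t}(f,Q)$, and since the terms are nonnegative this yields the stated bound for $[f(x) - M_s(f,Q_0)]_+$.

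The step I expect to be the crux is the coupling identity $M_s(f,\widehat{R_i}) \le M_t(f,R_{i-1})$ for consecutive cubes of the chain: it is precisely what ties the martingale telescoping of the medians to the oscillation quantities $\sigma^+_{s,t}$, and it relies both on $\widehat{R_i}$ being a dyadic subcube of $R_{i-1}$ and on the maximality built into the stopping rule. (The companion bound for the lower oscillations of $f$ is obtained by the symmetric argument — replacing the stopping condition $M_s(f,Q') > M_t(f,Q)$ by $M_t(f,Q') < M_s(f,Q)$ and $\sigma^+_{s,t}$ by $\sigma^-_{s,t}$ throughout — or, equivalently, by applying the present proposition to $-f$ with $s$ and $t$ replaced by $1-t$ and $1-s$.)
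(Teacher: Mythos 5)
Your proof is correct. It follows the same Calder\'on--Zygmund stopping-time template as the paper's, but the stopping rule is genuinely different: the paper's children of a stopping cube $Q'$ are the maximal dyadic subcubes $Q$ with $M_s(f,Q) - M_s(f,Q') > \sigma^+_{s,t}(f,Q')$, a threshold that unwinds (for $Q' \neq Q_0$, using $M_s(f,Q') \geq M_s(f,\widehat{Q'})$) to $M_s(f,Q) > M_s(f,Q') + M_t(f,Q') - M_s(f,\widehat{Q'})$, whereas your rule uses the lower and simpler threshold $M_s(f,Q) > M_t(f,Q')$. Your family is therefore a priori larger, but the packing estimate is the same in both cases — the set $\{y \in Q' : f(y) > M_t(f,Q)\}$ has measure at least $(1-s)|Q'|$ for each child and at most $(1-t)|Q|$ in total — so you get the identical sparsity constant $\eta^+ = \tfrac{t-s}{1-s}$. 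Your rule has the advantage that the threshold involves only $f$ and the parent cube, not the auxiliary oscillation quantity $\sigma^+$, and your verification of property (1) and of the identity $\sigma^+_{s,t}(f,Q) = M_t(f,Q) - M_s(f,\widehat{Q})$ falls out cleanly from maximality. For the pointwise bound, you telescope $a_i = M_t(f,R_i)$ directly along the chain of stopping cubes containing $x$ and close at the bottom via Proposition~\subref{prop:medians}{prop:medians_7}, whereas the paper sets up an inductive identity with a remainder term supported on generation-$m$ cubes and kills the remainder by letting $m \to \infty$ using the geometric decay of $\bigl(\tfrac{1-t}{1-s}\bigr)^m|Q_0|$. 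The two endgames are equivalent in substance (both rest on the same generation-measure decay and on median convergence a.e.), but your chain formulation is arguably more transparent because the telescoping inequality $a_i \leq a_{i-1} + \sigma^+_{s,t}(f,R_i)$, with the key comparison $M_s(f,\widehat{R_i}) \leq M_t(f,R_{i-1})$, makes explicit exactly where the maximality of the stopping cubes is being used.

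One small presentational point: the claim that the $\mc{F}^+$-cubes containing a given $x$ form a chain in which consecutive members are parent and child in the tree deserves a sentence of justification — if $R_i \subsetneq R_{i-1}$ were not a child of $R_{i-1}$ but of some earlier $R_j$ ($j < i-1$), then $R_{i-1}$ and $R_i$ would both be (or lie inside) children of $R_j$, contradicting disjointness of children of a common parent; this is routine but worth recording since it underpins the telescoping.
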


\begin{proof}
    We assume that $s, t$, and $f$ are fixed and omit them from the notation so that $\sigma^+(Q) = \sigma^+_{s,t}(f, Q)$. For $Q' \in \mc{D}(Q_0)$, let $\mc{B}_{Q'}$ be the maximal subfamily of 
    \[\{Q \in \mc{D}(Q') : M_s(f, Q) - M_s(f, Q') > \sigma^+(Q')\}.\]
    Set $\mc{B}^0_{Q_0} = \{Q_0\}$, $\mc{B}^1_{Q_0} = \mc{B}_{Q_0}$ and for $k \geq 1$, inductively define
    \[\mc{B}^k_{Q_0} = \bigcup_{Q \in \mc{B}^{k-1}_{Q_0}}\mc{B}_Q.\]
    We claim that \cref{prop:upper_osc} holds with $\mc{F}^+ := \bigcup_{k = 0}^{\infty}\mc{B}^k_{Q_0}$. We write
    \begin{align*}
        f(x) - M_s(f, Q_0) & = \chi_{Q_0 \setminus \bigcup \mc{B}_{Q_0}}(x)[f(x) - M_s(f, Q_0)] \\
        & + \sum_{Q \in \mc{B}_{Q_0}}\chi_Q(x)[M_s(f, Q) - M_s(f, Q_0)] \\
        & + \sum_{Q \in \mc{B}_{Q_0}}\chi_Q(x)[f(x) - M_s(f, Q)]
    \end{align*}
    For $x \in Q_0 \setminus \bigcup \mc{B}_{Q_0}$, we have that 
    \[M_s(f, Q) - M_s(f, Q_0) \leq \sigma^+(Q_0)\] 
    for all dyadic cubes $Q$ containing $x$. It follows from Proposition \subref{prop:medians}{prop:medians_7} that 
    \[f(x) - M_s(f, Q_0) \leq \sigma^+(Q_0)\] 
    a.e. on $Q_0 \setminus \bigcup \mc{B}_{Q_0}$. Also, for $Q \in \mc{B}_{Q_0}$, by definition of $\sigma^+(Q)$ and the fact that $Q$ is maximal, we have that
    \[M_s(f, Q) - M_s(f, Q_0) \leq \sigma^+(Q) + M_s(f, \widehat{Q}) - M_s(f, Q_0) \leq \sigma^+(Q) + \sigma^+(Q_0).\]
    Therefore
    \begin{align*}
        f(x) - M_s(f, Q_0) & \leq \chi_{Q_0 \setminus \bigcup \mc{B}_{Q_0}}(x)\sigma^+(Q_0) \\
        & + \sum_{Q \in \mc{B}_{Q_0}}\chi_Q(x)[\sigma^+(Q) + \sigma^+(Q_0)] \\
        & + \sum_{Q \in \mc{B}_{Q_0}}\chi_Q(x)[f(x) - M_s(f, Q)] \\
        & = \chi_{Q_0}(x)\sigma^+(Q_0) + \sum_{Q \in \mc{B}_{Q_0}}\chi_Q(x) \sigma^+(Q) + \sum_{Q \in \mc{B}_{Q_0}}\chi_Q(x)[f(x) - M_s(f, Q)].
    \end{align*}
    Applying the same argument inductively to $[f(x) - M_s(f, Q)]$ in the right most term yields
    \[f(x) - M_s(f, Q_0) \leq 
    \sum_{k = 0}^{m}\sum_{Q \in \mc{B}^k_{Q_0}}\chi_Q(x) \sigma^+(Q)
    + \sum_{Q \in \mc{B}^m_{Q_0}}\chi_Q(x)[f(x) - M_s(f, Q)]\]
    for all $m \geq 0$. Next, note that if $Q \in \mc{B}_{Q'}$, then
    \[|\{x \in Q : f(x) \leq \sigma^+(Q') + M_s(f, Q')\}| \leq |\{x \in Q : f(x) < M_s(f, Q)\}| \leq s|Q|.\]
    Then since $\sigma^+(Q') \geq M_t(f, Q') - M_s(f, Q')$,
    \begin{align*}
        \sum_{Q \in \mc{B}_{Q'}}|Q| & \leq \frac{1}{1 - s}\sum_{Q \in \mc{B}_{Q'}}|\{x \in Q : f(x) - M_s(f, Q') > \sigma^+(Q')\}| \\
        & \leq  \frac{1}{1 - s}|\{x \in Q' : f(x) - M_s(f, Q') > M_t(f, Q') - M_s(f, Q')\}| \\
        & \leq \frac{1-t}{1-s}|Q'|.
    \end{align*}
    It follows by induction that
    \[\sum_{Q \in \mc{B}^m_{Q_0}}|Q| \leq \left(\frac{1-t}{1-s}\right)^m|Q_0|.\]
    Therefore by letting $m \to \infty$ in the estimate for $f(x) - M_s(f, Q_0)$, we have that 
    \[f(x) - M_s(f, Q_0) \leq 
    \sum_{k = 0}^{\infty}\sum_{Q \in \mc{B}^k_{Q_0}}\chi_Q(x) \sigma^+(Q)
    \]
    almost everywhere which is the desired estimate. The family $\mc{F}^+$ is sparse since for $Q \in \mc{F}^+$, we can take $E_Q = Q \setminus \bigcup \mc{B}_Q$ which satisfies $|E_Q| \geq \frac{t - s}{1 - s}|Q|$. Therefore $\mc{F}^+$ is $\eta^+$-sparse with
    \[\eta^+ = \frac{t - s}{1 - s}.\]
    This proves that, in assertion (2), the leftmost term is bounded by the rightmost term. For assertion (1), if $Q \in \mc{F}^+$ with $Q \neq Q_0$, then $Q$ satisfies an estimate of the form 
    \[M_s(f, Q) > M_s(f, Q') + \sigma^+(Q')\]
    where $Q'$ is the cube so that $Q \in \mc{B}_{Q'}$. Since the parent $\widehat{Q}$ satisfies the opposite estimate, we must have $M_s(f, Q) \geq M_s(f, \widehat{Q})$ as wanted.
    Furthermore, the equality in assertion (2) between the rightmost term and the middle term now follows easily, since for $Q \in \mc{F}^+$,
    \begin{align*}
        \sigma^+(Q) & = M_t(f, Q) - M_s(f, Q) + [M_s(f, Q) - M_s(f, \widehat{Q})]_+ \\
        & = M_t(f, Q) - M_s(f, Q) + M_s(f, Q) - M_s(f, \widehat{Q}) \\
        & = M_t(f, Q) - M_s(f, \widehat{Q})
    \end{align*}
    
\end{proof}
By using a similar argument with the stopping time:
\[M_t(f, Q') - M_t(f, Q) > \sigma^-(Q'),\]
we can obtain a family of $\eta^--$sparse cubes with $\mc{F}^- \subset \mc{D}(Q_0)$ and
\[\eta^- = \frac{t - s}{t},\]
so that $M_t(f, \widehat{Q}) \geq M_t(f, Q)$ for all $Q \in \mc{F}^-$ and
\[[M_t(f, Q_0) - f(x)]_+ \leq \sum_{Q \in \mc{F}_-}\sigma_{s,t}^-(f, Q)\chi_Q(x).\]
\begin{proof}[Proof of \cref{thm:median_decomp_general}]
We apply \cref{prop:upper_osc} and the remarks following it to obtain the sparse families $\mc{F}^+$ and $\mc{F}^-$. By \cref{rmk:sparse_carleson}, we see that $\mc{F}:= \mc{F}^+ \cup \mc{F}^-$ is $\eta$-sparse with
\[\eta = \left(\left(\frac{t-s}{1-s}\right)^{-1} + \left(\frac{t-s}{t}\right)^{-1}\right)^{-1} = \frac{t - s}{t - s + 1}.\]
Strictly speaking, the cube $Q_0$ is counted twice in $\mc{F}$, once for $\mc{F}^+$ and once for $\mc{F}^-$,but the reader can easily verify that this does not affect the proof.
Finally, recalling that $\sigma_{s,t}(f, Q) = \sigma_{s,t}^+(f, Q) + \sigma_{s,t}^-(f, Q)$, we have
\begin{align*}
    |f(x) - M_s(f, Q_0)| & = [f(x) - M_s(f, Q_0)]_+ + [M_s(f, Q_0) - f(x)]_+ \\
    & \leq [f(x) - M_s(f, Q_0)]_+ + [M_t(f, Q_0) - f(x)]_+ \\
    & \leq \sum_{Q \in \mc{F}^+}\chi_Q(x) \sigma_{s,t}^+(f, Q) + \sum_{Q \in \mc{F}^-}\chi_Q(x) \sigma_{s,t}^-(f, Q) \\
    & \leq \sum_{Q \in \mc{F}}\chi_Q(x)\sigma_{s,t}(f, Q)
\end{align*}
as wanted.
\end{proof}

We are now ready to prove \cref{thm:median_decomp}, which we had restated as \cref{thm:median_decomp2}.

\begin{proof}[Proof of \cref{thm:median_decomp}]
    We start by applying \cref{thm:median_decomp_general} to obtain the $\eta$-sparse family $\mc{F} \subset \mc{D}(Q_0)$. Set 
    \[r = \max\left(\frac{1-t}{1-s}, \frac{s}{t}\right) < 1.\]
    For a cube $Q \in \mc{F}$ with $Q \neq Q_0$, we apply \cref{lem:BMO_char} to the cube $Q$ and its parent $\widehat{Q}$. We obtain a family of cubes 
    \[Q \subset Q_1 \subset \cdots Q_{k-1} \subset Q_k = \widehat{Q}\]
    so that 
    \[|Q_j| < r^{-1}|Q_{j-1}| < \cdots < r^{-j}|Q|\]
    and
    \[\max(M_s(f, Q) - M_s(f, \widehat{Q}), M_t(f, \widehat{Q}) - M_t(f, Q)) \leq \sum_{j = 1}^{k}[M_t(f, Q_j) - M_s(f, Q_j)].\]
    Therefore we have established 
    \[|f(x) - M_s(f, Q_0)| \leq 2 \sum_{Q \in \mc{F}}\left(d_{s,t}(f, Q) + \sum_{j = 1}^{k}d_{s,t}(f, Q_j)\right)\chi_Q(x).\]
    To conclude the proof of the theorem, we show that 
    \[\{Q, Q_1, ..., Q_k : Q \in \mc{F}\}\]
    is sparse. For $j = 1, ..., k$, set 
    \[
    \mc{S}_j = \{Q_j : Q \in \mc{F}\}, \quad \mc{S}_0 = \mc{F}, \quad \mc{S} = \bigcup_{j = 0}^{k}\mc{S}_j.\]
    We show that each $\mc{S}_j$ is Carleson. Indeed, let $\mc{S}_j' \subset \mc{S}_j$ and write $\mc{S}_j' = \{Q_j : Q \in \mc{F}'\}$ where $\mc{F}' \subset \mc{F}$. We see that 
    \[\sum_{Q_j \in \mc{S}_j'}|Q_j| = \sum_{Q \in \mc{F}'}|Q_j| \leq r^{-j}\sum_{Q \in \mc{F}'}|Q| \leq r^{-j}\eta^{-1}\left|\bigcup_{Q \in \mc{F}'}Q\right| \leq r^{-j}\eta^{-1}\left|\bigcup_{Q_j \in \mc{S}_j'}Q_j\right|\]
    where we used the fact that $\mc{F}$ is $\eta^{-1}$-Carleson (by \cref{rmk:sparse_carleson}) in the second inequality and that $Q \subset Q_j$ in the last inequality. Therefore $\mc{S}_j$ is $r^{-j}\eta^{-1}$-Carleson.
    Then $\mc{S}$ is $\left( \sum_{j=0}^{k} r^{-j}\eta^{-1} \right)$-Carleson, and thus, by \cref{rmk:sparse_carleson} again, $\eta'$-sparse with $\eta'$ depending only on $s, t$, and $n$.
\end{proof}

\section{Theorem \ref{thm:BMO_char} fails in the dyadic setting} \label{sec:dyadic_example}

We showed in \cref{thm:BMO_char} that the quantities $d_{s,t}(f, Q)$ characterize $BMO$ for \emph{any} $0 < s < t < 1$. In this section, we show that (for certain $s$ and $t$), the characterization fails for dyadic $BMO$. Define 
\[\|f\|_{BMO_{\text{d}}} := \sup_{Q \in \mc{D}}\dashint_{Q}|f - \ang{f}_Q| \, dx\]
where $\mc{D}$ is the standard dyadic lattice in $\R^n$ and
\[BMO_{\text{d}} = \{f \in L^1_{loc}(\R^n) : \|f\|_{BMO_{\text{d}}} < \infty\}.\]
It is immediate from Proposition \subref{prop:medians}{prop:medians_8} that if $f \in BMO_d$, then 
\[\sup_{Q \in \mc{D}}[M_t(f, Q) - M_s(f, Q)] \lesssim_{s, t} \|f\|_{BMO_{\text{d}}}\] 
for any $0 < s < t < 1$. On the other hand, if $s$ and $t$ are sufficiently separated, the converse inequality holds. 
Indeed, e.g. for $n=1$, if $t - s \geq 1/2$, then $\frac{1}{2} > r = \max\left(\frac{1-t}{1-s}, \frac{s}{t}\right)$, and thus the chain of cubes constructed in \cref{lem:BMO_char}(2) consists only of $Q$ and $Q'$, so the sparse family obtained in the proof of \cref{thm:median_decomp} actually ends up being dyadic. In $\R^n$ the analogous condition is $\frac{1}{2^n} > r = \max\left(\frac{1-t}{1-s}, \frac{s}{t}\right)$, which is satisfied if and only if $t - s > (2^n - 1) \max \{s, 1-t\}$.
We prove that the characterization fails on $\R$ for certain values of $s$ and $t$. Of course we can use a similar line of reasoning to find counterexamples in $\R^n$.
\begin{theorem}
    If $0 < s < t < 1$ are such that $1/4 < s < t < 1 / 2$, then on $\R$, the median differences $\{d_{s,t}(f, I)\}_{I \in \mc{D}}$ do not characterize $BMO_{\text{d}}$. In other words, there exists some $f \in L^1_{loc}$ so that 
    \[\sup_{I \in \mc{D}}d_{s,t}(f, I) < \infty\]
    but $f \not \in BMO_{d}$.
\end{theorem}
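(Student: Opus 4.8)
The construction must exploit the one situation in which dyadic median differences fail to register oscillation: a function whose jump, at each dyadic scale, is carried by a \emph{minority} portion of the natural dyadic interval containing it, so that an $s$- or $t$-median with $t<1/2$ stays pinned to the bottom value while the $L^1$-average does not. Fix a parameter $\lambda\in(1,2)$ and set
\[
f:=\sum_{k\ge 0}\lambda^{k}\,\chi_{J_{k}},\qquad J_{k}:=\bigl[2^{-k-1},\,2^{-k}\bigr).
\]
Then $f\ge 0$, $f\equiv0$ off $[0,1)$, and $\int_{0}^{1}f=\sum_{k\ge0}\lambda^{k}2^{-k-1}=(2-\lambda)^{-1}<\infty$, so $f\in L^1_{loc}(\R)$. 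I will write $I_{k}:=[0,2^{-k})$, which is itself a dyadic interval, and I record that for $j\ge k$ the set $I_{k}\cap J_{j}$ has relative measure $2^{-(j-k)-1}$ inside $I_{k}$.

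\textbf{Step 1: $d_{s,t}(f,I)=0$ for every $I\in\mc{D}$.} This is a short case analysis over the standard lattice, using that distinct dyadic intervals are nested or disjoint, that $[0,1)$ is itself dyadic, and that its proper dyadic subintervals are exactly the $I_{k}$ $(k\ge 1)$ together with the dyadic subintervals of the various $J_{k}$. If $I$ is disjoint from $(0,1)$, then $f\equiv0$ a.e.\ on $I$. If $I\subseteq[0,1)$ but $I$ is none of the $I_{k}$, then $I$ is contained in a single $J_{k}$, on which $f$ is constant. If $I$ strictly contains $[0,1)$, then $I=[0,2^{j})$ for some $j\ge 1$, so $f=0$ on $[1,2^{j})$, a subset of $I$ of relative measure $\ge 1/2>t$, and since $f\ge0$ this forces $M_{s}(f,I)=M_{t}(f,I)=0$. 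In all these cases $d_{s,t}(f,I)=0$. The only remaining case, $I=I_{k}$, is the crux: on $I_{k}$ the function attains the strictly increasing values $\lambda^{k}<\lambda^{k+1}<\cdots$, and its \emph{smallest} value $\lambda^{k}$ fills a subset of relative measure exactly $1/2$, which is $>t>s$; hence, straight from the definition of the upper median value, $M_{s}(f,I_{k})=M_{t}(f,I_{k})=\lambda^{k}$, and again $d_{s,t}(f,I_{k})=0$. Therefore $\sup_{I\in\mc{D}}d_{s,t}(f,I)=0<\infty$.

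\textbf{Step 2: $f\notin BMO_{\text{d}}$.} A geometric-series computation gives $\ang{f}_{I_{k}}=2^{k}\sum_{j\ge k}\lambda^{j}2^{-j-1}=\lambda^{k}/(2-\lambda)=:C\lambda^{k}$, where $C=(2-\lambda)^{-1}>1$ because $\lambda\in(1,2)$; consequently
\[
\dashint_{I_{k}}\bigl|f-\ang{f}_{I_{k}}\bigr|\,dx
=2^{k}\sum_{j\ge k}\bigl|\lambda^{j}-C\lambda^{k}\bigr|2^{-j-1}
=\frac{\lambda^{k}}{2}\sum_{m\ge0}\bigl|\lambda^{m}-C\bigr|2^{-m}
=\frac{D}{2}\,\lambda^{k},
\]
where $D:=\sum_{m\ge0}|\lambda^{m}-C|\,2^{-m}$ is finite (since $\lambda<2$) and strictly positive (its $m=0$ term is $C-1>0$). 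Letting $k\to\infty$ yields $\|f\|_{BMO_{\text{d}}}\ge\sup_{k}\tfrac{1}{2}D\lambda^{k}=\infty$, so $f\notin BMO_{\text{d}}$, which completes the proof.

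\textbf{Where the work is.} Everything outside the case $I=I_{k}$ in Step~1 is bookkeeping; the content of the theorem is precisely the observation made there, namely that a jump of $f$ placed at the dyadic scale $2^{-k}$ is invisible to medians of level $<1/2$ because the larger values occupy only the minority half of $I_{k}$ while the bottom value fills the other half — whereas the honest average $\ang{f}_{I_{k}}$ is dragged up geometrically by the factor $C>1$. Note that this argument uses only $0<s<t<1/2$, which contains the stated range $1/4<s<t<1/2$; if instead $t\ge 1/2$ the construction fails, since then $M_{t}(f,I_{k})$ would jump to $\lambda^{k+1}$ or higher and $d_{s,t}(f,I_{k})$ would itself blow up with $k$.
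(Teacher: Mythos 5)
Your proof is correct, and it takes a genuinely different route from the paper's. The paper works with a Haar-type function $f=\sum_{j}(h_{I_{2j}}+(j+1)h_{I_{2j+1}})$ spread over $[0,\infty)$: on each long dyadic interval the values split into four quarters ($=1$, $=-1$, $\geq m+1$, $\leq -(m+1)$), so both medians get pinned at $-1$ precisely because $1/4<s<t<1/2$, while the unbounded quarters destroy membership in $BMO_{\text{d}}$. Your construction instead concentrates a geometric blow-up at the origin, $f=\sum_k\lambda^k\chi_{[2^{-k-1},2^{-k})}$, and exploits the fact that on each $I_k=[0,2^{-k})$ the bottom value occupies exactly half the interval, so any median of level strictly below $1/2$ sticks to $\lambda^k$ while the average $\ang{f}_{I_k}=\lambda^k/(2-\lambda)$ is inflated by the fixed factor $(2-\lambda)^{-1}>1$, forcing $\dashint_{I_k}|f-\ang{f}_{I_k}|\gtrsim\lambda^k\to\infty$. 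Both arguments rest on the same underlying phenomenon (a jump carried by a half of a dyadic interval is invisible to sub-$1/2$ medians), but yours buys something extra: it establishes the failure for the full range $0<s<t<1/2$, not just $1/4<s<t<1/2$ as stated, and the verification that $d_{s,t}(f,I)=0$ for \emph{every} $I\in\mc{D}$ is a clean exhaustive case analysis (your bookkeeping of the three non-critical cases and the critical case $I=I_k$ is complete and each computation of $M_s$, $M_t$, and $\ang{f}_{I_k}$ checks out). Your closing remark that the construction necessarily breaks for $t\geq 1/2$ is also accurate and consistent with the paper's observation that for $t-s\geq 1/2$ the dyadic characterization does hold.
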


\begin{proof}
    For $I \in \mc{D}$, if $I_-$ (resp. $I_+$) denotes the left (resp. right) child of $I$, let $h_I$ denote the Haar function 
    \[h_I = \chi_{I_-} - \chi_{I_+}.\]
    The key observation is that for such functions, the median differences $d_{s,t}$ are all zero. That is, the median differences do not detect the jump in $f$. Let $I_j = [j, j + 1)$. We define 
    \[f = \sum_{j = 0}^{\infty}(h_{I_{2j}} + (j + 1)h_{I_{2j + 1}}).\]
    Let $I \in \mc{D}$. If $l(I) \leq 1$, then $f\chi_I$ is either constant or a constant multiple of a Haar function. In either case $d_{s,t}(f, I) = 0$. Now suppose $l(I) \geq 2$. Write 
    \[I = \bigcup_{j = m}^{n}(I_{2j} \cup I_{2j + 1}).\]
    Then on $(1/4)$-th of $I$, $f = 1$, on $(1/4)$-th of $I$, $f = -1$, on $(1/4)$-th of $I$, $f \leq -(m + 1)$ and on $(1/4)$-th of $I$, $f \geq m + 1$. Therefore 
    \[d_{s,t}(f, I) = M_t(f, I) - M_s(f, I) = -1 - (-1) = 0.\]
    It follows that 
    \[\sup_{I \in \mc{D}}[M_t(f, I) - M_s(f, I)] = 0 < \infty.\]
    However, if $s' < 1/4$ and $t' > 3/4$, then for $I = [2m, 2m + 2)$,
    \[d_{s',t'}(f, I) = M_{t'}(f, I) - M_{s'}(f, I) = m + 1 - (-(m + 1)) = 2(m + 1).\]
    Since this holds for any $m > 0$, we see that 
    \[\sup_{I \in \mc{D}}[M_{t'}(f, I) - M_{s'}(f, I)] = \infty\]
    and thus $f \not \in BMO_{d}$ by the remarks at the start of the section.
\end{proof}

\section{Applications to Subsets of \texorpdfstring{$\R^n$}{}}\label{sec:subsets}

For the remainder of the paper, we focus our attention on the study of integrability properties of distance functions $\dist(\cdot, E)$ where $E \subset \R^n$. In this section, we will prove \cref{thm:BMO_char_dist}. Set
\[d(x) = d_E(x) := \dist(x, E).\]
We start with some definitions.
\begin{definition}\label{def:DefVolumeQuantities}
    Let $E$ be a nonempty subset of $\R^n$.
    \begin{enumerate}
        \item We say that a set $A$ is $E$-free if $A \cap E = \emptyset$. We say that a cube $Q$ is maximal $E$-free if $Q$ is $E$-free but $2Q$ is not. For a cube $Q_0$ that intersects $E$, let 
        \[\mc{V}_1(E, Q_0) = \mc{V}_1(Q_0)\] 
        be the volume of a maximum sub-cube $Q \in \mc{D}(Q_0)$ that is $E$-free. I.e.,
        \[\mc{V}_1(Q_0) = \max\{|Q| : Q \in \mc{D}(Q_0), Q \cap E = \emptyset\}.\]
        \item For $\delta > 0$, and a cube $Q_0$ that intersects $E$, let $\mc{S}_\delta(Q_0)$ be the maximal subfamily of \[\{Q \in \mc{D}(Q_0) : Q \text{ is $E$-free and } |Q| \geq \delta\}.\]
        \item For $0 < s < 1$ and a cube $Q_0$ that intersects $E$, let \[\mc{V}_s(E, Q_0) = \mc{V}_s(Q_0) = \sup\left\{\delta > 0 : \sum_{Q \in \mc{S}_\delta(Q_0)}|Q| \geq (1 - s)|Q_0|\right\}.\]
    \end{enumerate}
\end{definition}

The following proposition gives the relationship between the median values of $d := \dist(\cdot, E)$ and the volume quantities $\mc{V}_s(Q_0)$. 

\begin{proposition}\label{prop:median_dist}
    Let $E$ be a nonempty subset of $\R^n$ with $E \neq \R^n$. Then for any $0 < s < t < 1$ and cube $Q_0$ that intersects $E$, 
    \begin{equation}\label{eqn:ComparabilityMedianRaisedToNAndVolume}
        M_s(d, Q_0)^n \lesssim_{n} \mc{V}_s(Q_0) \lesssim_{n, s, t} M_t(d, Q_0)^n.
    \end{equation}
    As a consequence, if $\mc{L}_s(Q_0) := \mc{V}_s(Q_0)^{1/n}$, then
    \[\mc{L}_s(Q_0)^{p} \lesssim_{p, n, s} \dashint_{Q_0}\dist(x, E)^{p} \, dx\]
    for all $p \neq 0$.
\end{proposition}

\begin{proof}
    For the first inequality, if $x \in Q_0$ and $\dist(x, E) \geq \lambda$ then $x$ is in some $E$-free cube $Q \in \mc{D}(Q_0)$ with $|Q| \geq (\lambda / \sqrt{n})^n$. Thus
    \[|\{x \in Q_0 : d(x) \geq \lambda\}| \leq \sum_{Q \in \mc{S}_{(\lambda / \sqrt{n})^n}(Q_0)}|Q|.\]
    This implies that
    \[M_s(d, Q_0)^n \leq (\sqrt{n})^{n}\mc{V}_s(Q_0)\]
    which is the first inequality in the statement. \par
    For the second inequality, fix some $\delta > 0$. Let $\eta < 1$ be some small constant that will be fixed later. For $x \in \eta Q$ where $Q \in \mc{S}_\delta(Q_0)$, we have 
    \[\dist(x, E) \geq \frac{(1 - \eta)}{2} l(Q) \geq \frac{(1 - \eta)}{2} \delta^{1/n} > \frac{(1 - \eta)}{(2 + \varepsilon)} \delta^{1/n}, \]
    for any $\varepsilon > 0$, so that for $\delta$ admissible in \cref{def:DefVolumeQuantities}(3) we have that
    \[
    (1-s) |Q_0| \eta^{n} \leq \eta^{n}\sum_{Q \in \mc{S}_\delta(Q_0)}|Q| = \sum_{Q \in \mc{S}_\delta(Q_0)}|\eta Q| \leq |\{x \in Q_0 : d(x) > \frac{(1 - \eta)}{(2 + \varepsilon)}\delta^{1/n}\}|.
    \]
    By Proposition \subref{prop:medians}{prop:medians_9}, this implies that
    \[
    M_{1 - \eta^n(1 - s)}(d, Q_0) > \frac{(1 - \eta)}{(2 + \varepsilon)}\delta^{1/n},
    \]
    which in turn implies that 
    \[\mc{V}_s(Q_0) \leq \left[\frac{2}{(1 - \eta)}\right]^n M_{1 - \eta^n(1 - s)}(d, Q_0)^n.\]
    If we choose $\eta$ so that $1 - \eta^n(1 - s) = t$, then we have the implication on the right. \par 
    For the last assertion, we apply Proposition \subref{prop:medians}{prop:medians_10}. Fix some $0 < s < 1$ and choose some $t$ so that $s < t < 1$ . If $p > 0$, then 
    \[\mc{L}_s(Q_0)^p \lesssim_{p, n, s} M_t(d, Q_0)^p\]
    and if $p < 0$ then 
    \[\mc{L}_s(Q_0)^p \lesssim_{p} M_{s}(d, Q_0)^p.\]
    The result then follows from Proposition \subref{prop:medians}{prop:medians_10}.
\end{proof}

\begin{definition}\label{DefMedianPorousSets}
    Let $E$ be a nonempty subset of $\R^n$ with $E \neq \R^n$.
    \begin{enumerate}
        \item Let $0 < s \leq 1$ and $Q_0$ be a cube that intersects $E$. Define
        \[\mc{S}_{\delta}^s(Q_0) := \mc{S}_{\delta \cdot \mc{V}_s(Q_0)}(Q_0).\] 
        I.e., $\mc{S}_\delta^s(Q_0)$ is the maximal subfamily of 
        \[\{Q \in \mc{D}(Q_0) : Q \text{ is $E$-free and } |Q| \geq \delta \cdot \mc{V}_s(Q_0)\}.\]
        \item We say that $E$ is weakly porous if there are constants $0 < \delta, s < 1$ so that for all cubes $Q_0$ which intersect $E$, 
        \[\sum_{Q \in \mc{S}_{\delta}^1(Q_0)}|Q| \geq (1 - s)|Q_0|.\]
        \item We say that $E$ is a median porous set if there are constants $0 < \delta < 1$ and $0 < s < t \leq 1$ so that for all cubes $Q_0$ which intersect $E$, 
        \[\sum_{Q \in \mc{S}_{\delta}^t(Q_0)}|Q| \geq (1 - s)|Q_0|.\]
    \end{enumerate}
\end{definition}

The definition of a weakly porous set was initially given in \cite{Vasin_LimitSetOfFuchsianGroupAndDynkinsLemma} for $\mathbb{T}$ (the boundary of the unit disk), and in $\R^n$ in \cite{anderson2022weakly}. In \cite{anderson2022weakly}, the following theorem (restated in the introduction of this manuscript as \cref{thm:weakly_porous_VIntrod}) was proved:
\begin{theorem}[\cite{anderson2022weakly}]\label{thm:weakly_porous}
    $E\subset \R^n$ is weakly porous if and only if $\dist(\cdot, E)^{-\alpha} \in A_1$ for some $\alpha > 0$ (or equivalently, by \cref{prop:prelim_BMO_Ap}, $- \log \dist(\cdot, E) \in BLO$).
\end{theorem}

\begin{remark}\label{WhyBLOCorrespondsToT=1AndNot0}
The weak porosity condition for a set $E$ corresponds to the parameter $t = 1$ in Definition \ref{DefMedianPorousSets}, which corresponds to the parameter $s=0$ in \cref{thm:BMO_char}. This is due to the fact that BMO is a vector space, so studying whether $\log \dist(\cdot, E) \in BMO$ is equivalent to whether $- \log \dist(\cdot, E) \in BMO$. But BLO is a cone, so the sign matters, and the introduction of the minus sign reverses the direction of the parameters.
\end{remark}

As mentioned in the Introduction,  \cref{thm:weakly_porous} completely characterizes for which sets $E \subset \R^n$ there exists an $\alpha > 0$ such that $\dist(\cdot, E)^{-\alpha} \in A_1$. However, the corresponding characterization for $A_p$ remained open. Indeed, some partial results were proved in \cite{anderson2022weakly}, \cite{dyda2017muckenhoupt}, and \cite{HoriuchiImbeddingTheoremsForWeightedSobolevSpaces2}, but all these assumed either that the set $E$ was weakly porous in the case of \cite{anderson2022weakly} (i.e. that there exists a $\beta > 0$ such that $\dist(\cdot, E)^{-\beta} \in A_1$, or stronger conditions, namely that the set $E$ is porous, in the cases of \cite{dyda2017muckenhoupt}, and \cite{HoriuchiImbeddingTheoremsForWeightedSobolevSpaces2}. (In \cite{HoriuchiImbeddingTheoremsForWeightedSobolevSpaces2} the explicit assumption was the $P(s)$ property, but that was proven later in \cite{LehrbackVahakangasInbetweenSobolevandHardy} to be equivalent to strictly positive Assouad codimension, which in turn is equivalent to porosity.)

We are now ready to prove \cref{thm:BMO_char_dist} which generalizes the $BLO$ result, namely \cref{thm:weakly_porous}, and, as explained in the Introduction, it completely answers the $A_p$ question, to wit,  \cref{que:which_sets}, when combining it with some basic facts about $A_p$ and $A_\infty$ weights, specifically Proposition $\subref{prop:prelim_BMO_Ap}{prelim_BMO_Ap_4}$ and  Proposition $\subref{prop:prelim_BMO_Ap}{prelim_BMO_Ap_2}$.

\begin{proof}[Proof of \cref{thm:BMO_char_dist}]
    We will assume that $t<1$, i.e. that the set $E$ is not weakly porous, and, equivalently, that there is no $\alpha>0$ such that $\dist(\cdot, E)^{-\alpha} \in A_1$ (i.e. that $- \log \dist(\cdot, E) \notin BLO$), since those two properties were proven to be equivalent in \cref{thm:weakly_porous}. See however \cref{cor:OurProofOfWeakPorosityResult}.

    First assume $E$ is a median porous set. The $BMO$ condition in particular implies that 
    \[|E^c \cap Q_0| \geq (1 - s)|Q_0|\]
    for all cubes $Q_0$. Therefore $E$ has measure zero and consequently, $\log d$ is real-valued. We show that 
    \[\sup_{Q}d_{s',t'}(\log d, Q) < \infty\]
    for some $0 < s' < t' < 1$ and it would follow from \cref{thm:BMO_char} that $\log d \in BMO$. Let $Q_0$ be an arbitrary cube. First assume that $Q_0$ intersects $E$. By definition, the $BMO$ condition on $E$ implies that 
\begin{equation}\label{ComparisonV1sAndVtForBMOSets}
    \mc{V}_t(Q_0) \leq \delta^{-1}\mc{V}_s(Q_0).
\end{equation}
    This implies 
    \[M_{t'}(d, Q_0) \lesssim M_{s'}(d, Q_0)\]
    for some $0 < s' < t' < 1$ by \cref{prop:median_dist}. By applying $\log$ to this inequality and using the fact that $\log(M_s(f, Q_0)) = M_{s}(\log f, Q_0)$ (which follows from Proposition \subref{prop:medians}{prop:medians_10}), we then have
    \[M_{t'}(\log d, Q_0) \leq C + M_{s'}(\log d, Q_0).\]
    Therefore 
    \[\sup_{Q, Q \cap E \neq \emptyset}d_{s',t'}(\log d, Q) < \infty.\]
    It remains to show the same for cubes that do not intersect $E$. Suppose now that $Q_0$ doesn't intersect $E$. Let $0 < z < 1$. On $(1-z)^{1/n}Q_0 = Q$, 
    \[\dist(x, E) \approx_{z,n} \max(\dist(Q, E), l(Q))\]
    (by considering the cases $2Q_0 \cap E = \emptyset$ and $2Q_0 \cap E \neq \emptyset$ separately). Since  $(1 - z)^{1/n} Q_0$ has measure $(1- z)|Q_0|$, it follows that 
    \begin{equation}\label{MedianComparableToDistanceToQOrSidelengthOfQ}
        M_{z}(d, Q_0) \approx_{z,n} \max(\dist(Q, E), l(Q)).
    \end{equation}
    In particular, by taking two different values of $z$, $M_{t'}(d, Q_0) \approx_{s',t',n} M_{s'}(d, Q_0)$ and 
    \[\sup_{Q, Q \cap E = \emptyset}d_{s',t'}(\log d, Q) < \infty\]
    follows as before. We conclude that $\log d \in BMO$, and thus, equivalently, by \cref{prop:prelim_BMO_Ap}, that $\dist(\cdot, E)^{-\alpha} \in A_\infty = \bigcup_{1 \leq p < \infty}A_p$ for some $\alpha > 0$. \par
    For the converse, suppose $\log d \in BMO$ and fix some $0 < s < t < 1$. Then by \cref{thm:BMO_char}, 
    \[\sup_{Q}d_{s, t}(\log d, Q) = K < \infty.\]
    Fix a cube $Q_0$ that intersects $E$. Therefore since 
    \[M_{t}(\log d, Q_0) - M_s(\log d, Q_0) \leq K,\]
    using Proposition \subref{prop:medians}{prop:medians_10}, \cref{rem:MedianIsRealNumber}, 
    and the fact that $\log d$ is real-valued a.e., since we assume $\log d \in BMO$, gives 
    \begin{equation}\label{ComparabilityOfMsAndMt}
        0 < M_t(d, Q_0) \leq e^K M_s(d, Q_0).
    \end{equation}
    Then by \cref{prop:median_dist}, 
    \begin{equation}\label{ComparabilityOfVsprimeAndVtprime}
    0 < \mc{V}_{t'}(Q_0) \leq K' \mc{V}_{s'}(Q_0)
    \end{equation}
    for some $0 < s'=s < t' < 1$. Indeed, to verify the strict positivity of $\mc{V}_{t'}(Q_0)$, choose $\varepsilon > 0$ such that $s + \varepsilon = t' < t$. Then \cref{prop:median_dist} yields that $0 < M_{s+ \varepsilon}(d, Q_0) \lesssim \mc{V}_{s+ \varepsilon}(Q_0) \lesssim M_t(d, Q_0)$, where the strict positivity of $M_{s+ \varepsilon}(d, Q_0)$ follows from the same reasoning as the one for $M_t(d, Q_0)$ in \eqref{ComparabilityOfMsAndMt}. 
    In other words, \eqref{ComparabilityOfVsprimeAndVtprime} says that
    \[\sum_{Q \in \mc{S}_{(K')^{-1}}^{t'}(Q_0)}|Q| \geq (1 - s')|Q_0|.\]
    But this is exactly the median porosity condition for $E$.

    Finally, \eqref{ExpLogCharacterizationOfAInfty} is one of the characterizations of an $A_\infty$ weight (see e.g. \cite[p.218, 6.3]{SteinFatBook}).
\end{proof}

While we used \cref{thm:weakly_porous} in our proof of \cref{thm:BMO_char_dist} to deal with the weakly porous case, the same proof of \cref{thm:BMO_char_dist} for the non-weakly porous case, properly understood in the limiting case (the weakly porous case), allows us to recover \cref{thm:weakly_porous}, which then makes \cref{thm:BMO_char_dist} self-contained in this manuscript. We present it next as a corollary of the proof. Recall Remark \ref{WhyBLOCorrespondsToT=1AndNot0}.

\begin{corollary}[Second proof of \cref{thm:weakly_porous}]\label{cor:OurProofOfWeakPorosityResult}
$E\subset \R^n$ is weakly porous if and only if $\dist(\cdot, E)^{-\alpha} \in A_1$ for some $\alpha > 0$ (or equivalently, by \cref{prop:prelim_BMO_Ap}, $- \log \dist(\cdot, E) \in BLO$).
\end{corollary}

\begin{proof}
Note that the equivalent condition to \cref{prop:median_dist} for $s = 0$ is that there exist constants $c_1, c_2 \in \R$ such that
\begin{equation}\label{ComparisonV1AndM0}
c_1 - M_{0}(-\log d, Q_0) \leq \log \mc{V}_1(Q_0)^{\frac{1}{n}} \leq c_2 - M_{0}(-\log d, Q_0) ,
\end{equation}
which is trivially true by \cref{def:DefVolumeQuantities}(1) and Remark \ref{MedianForSEquals0}. 

First assume the set $E$ is weakly porous. Then, by definition, the weak porosity condition on $E$ implies the analogue of \eqref{ComparisonV1sAndVtForBMOSets}, namely that
\begin{equation}\label{ComparisonV1sAndVtForWeaklyPorousSets}
    \mc{V}_1(Q_0) \leq \delta^{-1}\mc{V}_s(Q_0).
\end{equation}
In turn, \cref{prop:median_dist}, yields an $s'$ with  $0 < s < s' < 1$, such that, combining \eqref{ComparisonV1AndM0} with \eqref{ComparisonV1sAndVtForWeaklyPorousSets} and Proposition \subref{prop:medians}{prop:medians_11} for $f = \dist(\cdot, E)$ and $g(x) = - \log x$ (which gives an $s''$ with $0 < s' < s'' < 1$), we get
\begin{equation}\label{ComparisonM0AndM1MinusSPrime}
M_{0}(-\log d, Q_0) \geq - \log \mc{V}_1(Q_0)^{\frac{1}{n}} - C' \geq - \log M_{s'}(d, Q_0) - C \geq  M_{1-s''}(-\log d, Q_0) - C .
\end{equation}
    Therefore 
    \[\sup_{Q, Q \cap E \neq \emptyset}d_{0,1-s''}(-\log d, Q) < \infty.\]
It remains to show the same for cubes that do not intersect $E$. So assume $Q_0 \cap E = \emptyset$. Then, 
\[
\esup_{x \in Q_0} \dist(x, E) \approx \max(\dist(Q, E), l(Q)),
\]
which implies that there exist constants $c_1, c_2 \in \R$ such that
\begin{align*}
    c_1 + \max(\log \dist(Q, E), \log l(Q)) & \leq 
- M_{0}(-\log d, Q_0) = \esup_{x \in Q_0} \log \dist(x, E) \\
& \leq c_2 + \max(\log \dist(Q, E), \log l(Q)).
\end{align*}

In other words, there exist constants $c_1, c_2 \in \R$ such that
\begin{equation}\label{M0ComparableToMinMinusLogDistance}
    c_1 + \min(- \log \dist(Q, E), - \log l(Q)) \leq 
 M_{0}(-\log d, Q_0) \leq c_2 + \min(- \log \dist(Q, E), - \log l(Q)).
\end{equation}

On the other hand, noting that \eqref{MedianComparableToDistanceToQOrSidelengthOfQ} also holds true in the current situation by the same proof, taking $ - \log$, and using again Proposition \subref{prop:medians}{prop:medians_11}, we get, for any $0 < z < 1$ that there exist constants $c_1, c_2 \in \R$, and $z'$ with $0 < z < z' < 1$ such that
\begin{equation}\label{M1MinusZComparableToMinMinusLogDistance}
 M_{1-z'}(-\log d, Q_0) \leq c_2 + \min(- \log \dist(Q, E), - \log l(Q)).
\end{equation}

Combining \eqref{M0ComparableToMinMinusLogDistance} with \eqref{M1MinusZComparableToMinMinusLogDistance}, we get that
\[\sup_{Q, Q \cap E = \emptyset}d_{0, 1-s'}(-\log d, Q) < \infty\]
    follows as before. In conclusion, $- \log \dist(\cdot, E) \in BLO$. \par
    The converse also follows exactly the proof of \cref{thm:BMO_char_dist}, with the minor modifications already introduced in the implication just proved. Namely, taking $- \log$, using Proposition \subref{prop:medians}{prop:medians_10} (instead of Proposition \subref{prop:medians}{prop:medians_11}) to get that $- \log M_{1-t}(d, Q_0) \leq  M_{t}(-\log d, Q_0)$, and using \eqref{ComparisonV1AndM0}. Note also that in the present implication, $0 < \mc{V}_{1}(Q_0)$ because $E^c \cap Q_0$ contains an open ball, since otherwise, $\mc{V}_{1}(Q_0) = 0$ and $-\log d \equiv +\infty$ a.e. in $Q_0$. We leave the details to the reader.
\end{proof}

\begin{remark}
    The reasoning in this section allows us to prove an analogous characterization for non-negative Hölder continuous functions. For a non-negative $\alpha$-Hölder continuous function $w$ and a constant $\eta > 0$, we say that a cube $Q$ is $(w, \eta)$-free if 
    \[\eta \cdot l(Q)^\alpha \leq \inf_{x \in Q}w(x).\]
    Let $\mc{S}_{\delta, \eta}(Q_0)$ be the maximal subfamily of 
    \[\{Q \in \mc{D}(Q_0) : Q \text{ is } (w, \eta) \text{-free and } |Q| \geq \delta\}\]
    and define the quantities $\mc{V}_{s, \eta}(Q_0)$ and collections $\mc{S}_{\delta, \eta}^s(Q_0)$ from $\mc{S}_{\delta, \eta}(Q_0)$ as we did for distance functions. The analogue of \cref{thm:BMO_char_dist} for Hölder continuous functions is the following:
    \begin{theorem}\label{thm:HolderContinuousCase}
        Let $w$ be a non-negative $\alpha$-Hölder continuous function on $\R^n$. Then $\log w \in BMO$ if and only if the following condition is satisfied: There exists constants $0 < s < t < 1$ and $\eta > 0$ so that for all cubes $Q_0$ that are not $(w, \eta)$-free, 
        \[\sum_{Q \in \mc{S}_{\delta, \eta}^t(Q_0)}|Q| \geq (1 - s)|Q_0|.\]
    \end{theorem}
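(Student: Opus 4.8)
The plan is to follow the proof of \cref{thm:BMO_char_dist} almost verbatim, the only genuinely new ingredient being a H\"older analogue of \cref{prop:median_dist}. First I would reduce to a distance-like function: we may assume $w \ge 0$ (otherwise $\log w$ is undefined on a positive-measure set), and we set $v := w^{1/\alpha}$, so that $\log w \in BMO$ if and only if $\log v = \tfrac{1}{\alpha}\log w \in BMO$. The function $v$ will play the role that $\dist(\cdot, E)$ played in \cref{sec:subsets}: since $x \mapsto x^{1/\alpha}$ is increasing, a cube $Q$ is $(w,\eta)$-free exactly when $v(x) \ge \eta^{1/\alpha}\, l(Q)$ for \emph{every} $x \in Q$, and one defines $\mc{S}_{\delta,\eta}(Q_0)$, $\mc{V}_{s,\eta}(Q_0)$, and $\mc{S}^{s}_{\delta,\eta}(Q_0)$ out of the $(w,\eta)$-free cubes exactly as in the distance setting. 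Note that, unlike an $E$-free cube, a $(w,\eta)$-free cube pins $v$ from below on \emph{all} of $Q$, which makes the argument slightly cleaner than in \cref{sec:subsets}.

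The key step is to establish the following H\"older analogue of \cref{prop:median_dist}: writing $C_H$ for a H\"older constant of $w$, for every $\eta > 0$, every $0 < s < 1$, and every cube $Q_0$ that is \emph{not} $(w,\eta)$-free,
\[ M_s(v, Q_0)^n \;\lesssim_{n,\alpha,C_H,\eta}\; \mc{V}_{s,\eta}(Q_0) \;\lesssim_{n,\alpha,C_H,\eta}\; M_s(v, Q_0)^n ; \]
here no pair $s < t$ of indices is needed, precisely because a $(w,\eta)$-free cube controls $v$ on the whole cube. The right-hand inequality goes as in \cref{prop:median_dist}: the disjoint cubes of $\mc{S}_{\delta,\eta}(Q_0)$ all lie in $\{x \in Q_0 : v(x) \ge \eta^{1/\alpha}\delta^{1/n}\}$, which forces $\mc{V}_{s,\eta}(Q_0) \le \eta^{-n/\alpha} M_s(v, Q_0)^n$. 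For the left-hand inequality, if $v(x_0) \ge \lambda$ then $w(x_0) \ge \lambda^\alpha$, so by H\"older continuity any cube $Q \ni x_0$ with $(\eta + C_H n^{\alpha/2})\, l(Q)^\alpha \le \lambda^\alpha$ satisfies $\inf_Q w \ge w(x_0) - C_H(\sqrt{n}\, l(Q))^\alpha \ge \eta\, l(Q)^\alpha$ and is therefore $(w,\eta)$-free; since $Q_0$ is not $(w,\eta)$-free we have $\lambda \le \sup_{Q_0} v \lesssim_{n,\alpha,C_H,\eta} l(Q_0)$, so a dyadic cube $Q \in \mc{D}(Q_0)$ of side comparable to $\lambda$ containing $x_0$ exists, and it has $|Q| \approx_{n,\alpha,C_H,\eta} \lambda^n$. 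Covering $\{v \ge \lambda\}\cap Q_0$ by maximal such cubes and letting $\lambda \uparrow M_s(v, Q_0)$ gives $M_s(v, Q_0)^n \lesssim \mc{V}_{s,\eta}(Q_0)$.

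Granting this, both implications would follow as in \cref{thm:BMO_char_dist}. For the ``if'' direction: $\log v \in BMO$ forces $v > 0$ a.e.; fixing any $\eta > 0$ and any $0 < s < t < 1$, \cref{thm:BMO_char} gives $K := \sup_Q [M_t(\log v, Q) - M_s(\log v, Q)] < \infty$, and for $Q_0$ not $(w,\eta)$-free this reads $0 < M_t(v, Q_0) \le e^K M_s(v, Q_0)$ (using $M_s(\log v, Q) = \log M_s(v, Q)$), so the displayed estimate yields $\mc{V}_{t,\eta}(Q_0) \le K' \mc{V}_{s,\eta}(Q_0)$; unwinding the definition of $\mc{V}_{s,\eta}$ as a supremum, this is exactly $\sum_{Q \in \mc{S}^{t}_{(K')^{-1},\eta}(Q_0)} |Q| \ge (1-s)|Q_0|$, the stated condition with $\delta = (K')^{-1}$. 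For the ``only if'' direction: the condition forces $|\{w = 0\}\cap Q_0| \le s|Q_0|$ for all $Q_0$, hence $|\{w = 0\}| = 0$ and $\log v$ is real-valued; then for $Q_0$ not $(w,\eta)$-free the condition reads $\mc{V}_{t,\eta}(Q_0) \le \delta^{-1}\mc{V}_{s,\eta}(Q_0)$, which via the displayed estimate gives $M_t(v, Q_0) \lesssim M_s(v, Q_0)$, i.e.\ $M_t(\log v, Q_0) - M_s(\log v, Q_0) \lesssim 1$, whereas for $(w,\eta)$-free $Q_0$ the H\"older bound $\sup_{Q_0} w \le (1 + C_H n^{\alpha/2}/\eta)\inf_{Q_0} w$ shows that $\log v$ oscillates by at most $\tfrac{1}{\alpha}\log(1 + C_H n^{\alpha/2}/\eta)$ on $Q_0$; either way $\sup_Q [M_t(\log v, Q) - M_s(\log v, Q)] < \infty$, so $\log v \in BMO$ by \cref{thm:BMO_char} and $\log w = \alpha\log v \in BMO$.

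I expect the only part that is not a direct transcription of \cref{sec:subsets} to be the left-hand inequality in the displayed estimate — extracting a $(w,\eta)$-free \emph{dyadic} cube of the correct scale from a large value of $v$ at a point — which is exactly where H\"older continuity takes the place of the ``nearest point of $E$'' reasoning available for distance functions; it is routine, but it is the step that does not come for free from the distance-function case.
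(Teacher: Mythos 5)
Your proof is correct and is precisely the transcription of the proof of \cref{thm:BMO_char_dist} that the paper indicates, with the new ingredient being the H\"older analogue $M_s(v,Q_0)^n \approx_{n,\alpha,C_H,\eta} \mc{V}_{s,\eta}(Q_0)$ of \cref{prop:median_dist} for $v = w^{1/\alpha}$, valid on cubes $Q_0$ that are not $(w,\eta)$-free, plus the observation that $(w,\eta)$-free cubes carry a uniformly bounded oscillation of $\log v$. Your remark that this two-sided estimate holds with the \emph{same} median index $s$ on both sides --- because a $(w,\eta)$-free cube pins $v$ from below on all of $Q$, whereas in the distance case the bound $\dist(\cdot,E) \gtrsim l(Q)$ only holds on a shrunken interior copy of an $E$-free cube --- is correct and mildly streamlines the final step.
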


        The proof is almost the same as the proof of \cref{thm:BMO_char_dist}. Indeed, if $w$ satisfies that 
        \[
        |w(z) - w(z')| \leq C_w |z-z'|^\alpha ,
        \]
        then the analogue of \cref{eqn:ComparabilityMedianRaisedToNAndVolume} is that, for $0 < s < 1$ and a cube $Q_0$ which is not $(w, \eta)-$free, 
        \[M_s(w, Q_0)^{\frac{n}{\alpha}} \approx_{n, \eta, \alpha, C_w} \mc{V}_s(Q_0).\]
        (The proof is completely analogous to that of \cref{eqn:ComparabilityMedianRaisedToNAndVolume}.        
        Note that there is no need for $t>s$, since the definition of a $(w, \eta)$-free cube already incorporates a strictly positive lower bound for $w$ in all the cube, so 
        for the analogue of the second inequality in \cref{eqn:ComparabilityMedianRaisedToNAndVolume}, there is no need to shrink the cube.)

        Then, following the proof of \cref{thm:BMO_char_dist}, if we first assume that the condition on cubes $Q_0$ that are not $(w, \eta)$-free holds, the set $E = \{ x \in \R^n : w(x) = 0\}$ has measure zero. Moreover, if $Q_0$ is $(w, \eta)$-free, then $w(x) \approx \inf_{x \in Q_0} w(x)$, by an analogous proof to that of \eqref{MedianComparableToDistanceToQOrSidelengthOfQ}. The rest of the proof of \cref{thm:BMO_char_dist} carries over as well, so we omit the details.
    
\end{remark}

\begin{remark}
Now we take some time to contextualize the main result of \cite{IgnacioGomezVargas_ApCharacterization2025} and explain how it compares to our results. In particular, we show that the main result of \cite{IgnacioGomezVargas_ApCharacterization2025} is a special case of \cref{thm:BMO_char_dist}. Note however that \cite{IgnacioGomezVargas_ApCharacterization2025} uses a more geometric argument, and has a probability interpretation for computing certain examples, which is of independent interest.

As we have discussed in this manuscript several times, we can suppose $E$ is closed without loss of generality. G\'omez Vargas defines (using different notation), for a cube $Q_0 \subset \R^n$, the collection 
\[\mc{D}_E(Q_0) = \{Q \in \mc{D}(Q_0) : Q \text{ maximal } E\text{-free}\}\]
and for $0 < a < 1$, the quantities 
\[\mc{A}_a(Q_0) := \sup \left\{L \geq 0 : \sum_{Q \in \mc{D}_E(Q_0), l(Q) \geq L}|Q| \geq a|Q_0|\right\}\]
and 
\[\mc{B}_a(Q_0) := \inf \left\{L \geq 0 : \sum_{Q \in \mc{D}_E(Q_0), l(Q) \leq L}|Q| \geq a|Q_0|\right\}.\]
The main result is then 
\begin{theorem}[\cite{IgnacioGomezVargas_ApCharacterization2025}]\label{thm:IgnacioThm}
    Let $E \subset \R^n$ be nonempty. Then the following statements are equivalent:
    \begin{enumerate}
        \item \label{eqn:IgnacioCondition} There is some $0 < a < (1 + 2^n)^{-1}$ so that $\mc{A}_a(Q_0) \lesssim \mc{B}_a(Q_0)$ for all cubes $Q_0$.
        \item \label{eqn:IgnacioCondition2} For any $p > 1$, there is some $\alpha \neq 0$ so that $\dist(\cdot, E)^\alpha \in A_p$ (or equivalently $\log \dist(\cdot, E) \in BMO$).
    \end{enumerate}
\end{theorem}
This is another characterization of $A_p$ for distance functions given in terms of similar quantities. Indeed, one can see from the definitions that $\mc{A}_a(Q_0) = \mc{V}_{1 - a}(Q_0)^{1/n}$. Also, as pointed out in \cite{IgnacioGomezVargas_ApCharacterization2025}, $\mc{B}_a(Q_0) \leq \mc{A}_{1 - a}(Q_0)$. Therefore \eqref{eqn:IgnacioCondition} implies in particular that $\mc{V}_{1 - a}(Q_0) \lesssim \mc{V}_{a}(Q_0)$ and thus $E$ is median porous. However, the definition of median porosity allows $\mc{V}_{t}(Q_0) \lesssim \mc{V}_{s}(Q_0)$ for any parameters $0 < s < t < 1$ and not just parameters of the form $s = a$, $t = 1 - a$ with $a$ small.

By setting $t = 1 - a$ and $s = a$, we see that 
\[t - s = 1 - 2a > 1 - 2(1 + 2^n)^{-1}.\]
Now, choose $s'$ so that $0 < s < s' < t < 1$, and $t - s' > 1 - 2(1 + 2^n)^{-1}$. Then by following the reasoning in the proof of \cref{thm:BMO_char_dist}, one would have 
\[M_t(\log d, Q_0) - M_{s'}(\log d, Q_0) \lesssim 1.\]
When $n \geq 2$, then 
\[t - s' > 1 - 2(1 + 2^n)^{-1} \geq \frac{1}{2}\]
and the fact that $\log d \in BMO$ follows from \cref{prop:median_compare} and \cref{thm:stromberg}. So in this case, the new characterization of $BMO$ for all $0 < s < t < 1$ is not needed, and implication $(1) \Rightarrow (2)$ of \cref{thm:IgnacioThm} follows when $n \geq 2$.

If $n=1$, the same reasoning gives the condition
\[
t - s' > \frac{1}{3} ,
\]
which then is not covered by \cref{thm:stromberg}, but is included in \cref{thm:BMO_char_dist}. 

The implication $(2) \Rightarrow (1)$ of \cref{thm:IgnacioThm} is also included in \cref{thm:BMO_char_dist}. 
\end{remark}

\section{The Quantitative Characterization}\label{sec:Ap_range}

In this section, we prove \cref{thm:Ap_range}. More specifically, we find the precise range of $\alpha$ values so that $\dist(\cdot, E)^{-\alpha} \in A_p$ for any given $1 < p \leq \infty$, extending the corresponding result in \cite{anderson2022weakly} for $A_1$. The quantitative characterization will be in terms of an ``Assouad dimension''-like quantity. Recall for a cube $Q$ that intersects $E$ and $0 < s \leq 1$, we defined the length quantity 
\[\mc{L}_s(Q, E) = \mc{L}_s(Q) := \mc{V}_s(Q)^{1/n}.\]
Now we recall the various notions of dimension and also give our own notion of dimension that we will use to answer \cref{que:Ap_range}. For $r > 0$, let
\[E_r = \{x \in \R^n : \dist(x, E) < r\}.\]
\begin{definition}\label{def:AssouadAndMuckenhoupt1Exponent}
    Let $E \subset \R^n$ be nonempty.
    \begin{enumerate}
        \item Let the (lower) Assouad co-dimension $\textup{\underline{co dim}}_A(E)$ be the supremum of all $\alpha \geq 0$ so that there is some constant $C$ (which could depend on $\alpha$) so that for all cubes $Q$ that are centered at a point of $E$ and all $0 < r < l(Q)$,
        \[\frac{|E_r \cap Q|}{|Q|} \leq C\left(\frac{l(Q)}{r}\right)^{-\alpha}.\] 
        \item Let the $1$-Muckenhoupt exponent $\textup{Mu}_1(E)$ be the supremum of all $\alpha \geq 0$ so that there is some constant $C$ (which could depend on $\alpha$) so that for all cubes $Q$ that are centered at a point of $E$ and all $0 < r < \mc{L}_1(Q)$,
        \[\frac{|E_r \cap Q|}{|Q|} \leq C\left(\frac{\mc{L}_1(Q)}{r}\right)^{-\alpha}.\]
        If $\mc{L}_1(Q) = 0$ for some ball $B$, we set  $\textup{Mu}_1(E) = 0$.
    \end{enumerate}
\end{definition}

\begin{remark}
    The definition of the $1$-Muckenhoupt exponent was first introduced in \cite{anderson2022weakly}, where it was called ``Muckenhoupt exponent" and denoted as $\textup{Mu}(E)$.
    It was defined in terms of balls and not cubes but as the reader will easily check, the two definitions agree (comparability constants between radii of balls and side lengths of cubes of comparable size can be absorbed into the constant $C$).
\end{remark}

\begin{remark}\label{rmk:muck_codim}
    If $E$ is porous then $\mc{L}_1(Q) \approx l(Q)$ for all cubes $Q$ so that $\textup{Mu}_1(E) = \text{\underline{co dim}}_A(E)$.
\end{remark}

The $1$-Muckenhoupt exponent was used to find the precise range of $\alpha$ values so that $\dist(\cdot, E)^{-\alpha} \in A_1$: 

\begin{theorem}\cite{anderson2022weakly}\label{thm:muck_weakly_porous}
    A nonempty subset $E \subset \R^n$ is weakly porous if and only if $\textup{Mu}_1(E) > 0$.
\end{theorem}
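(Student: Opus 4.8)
The plan is to avoid iterating the weak porosity condition directly, and instead derive \cref{thm:muck_weakly_porous} from the already-established $A_1$ characterization (\cref{thm:weakly_porous}). Writing $d = d_E = \dist(\cdot, E)$, I would prove the equivalence
\[\textup{Mu}(E) > 0 \quad\Longleftrightarrow\quad d^{-\beta} \in A_1 \text{ for some } \beta > 0,\]
after which \cref{thm:weakly_porous} gives the theorem at once. The only geometric ingredient is the comparison $\tfrac12 \mc{L}_1(Q) \le \esup_Q d \le C_n\,\mc{L}_1(Q)$, valid for every cube $Q$ that meets $E$: the left inequality is clear from the center of a largest $E$-free dyadic subcube of $Q$, and the right one holds because if $y \in Q$ nearly attains $\rho := \esup_Q d$, then the dyadic subcube of $Q$ through $y$ whose side is the largest dyadic scale $\le \rho/(8\sqrt n)$ sits inside $B(y, \rho/4)$ and is therefore $E$-free. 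Note also that in either of the two situations $E$ has Lebesgue measure zero (the $A_1$ condition forces $d > 0$ a.e.; a positive $\textup{Mu}(E)$ forces $\mc L_1(Q) > 0$ for every cube meeting $E$, hence the same), so there is no measurability obstruction.

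For ``$d^{-\beta} \in A_1$ for some $\beta > 0$ $\Rightarrow$ $\textup{Mu}(E) > 0$'' I would just invoke Markov's inequality: for a cube $Q$ and $r > 0$,
\[|E_r \cap Q| = \bigl|\{x \in Q : d(x)^{-\beta} > r^{-\beta}\}\bigr| \le r^{\beta}\int_Q d^{-\beta}\,dx \le [d^{-\beta}]_{A_1}\, r^{\beta}\,(\esup_Q d)^{-\beta}\,|Q|,\]
where the last step uses $\dashint_Q d^{-\beta} \le [d^{-\beta}]_{A_1}\|d^{\beta}\|_{L^\infty(Q)}^{-1} = [d^{-\beta}]_{A_1}(\esup_Q d)^{-\beta}$. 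When $Q$ is centered at a point of $E$ we have $\esup_Q d \ge \tfrac12 \mc L_1(Q)$, so $|E_r \cap Q| \le 2^{\beta}[d^{-\beta}]_{A_1}\bigl(\mc L_1(Q)/r\bigr)^{-\beta}|Q|$ for all $0 < r < \mc L_1(Q)$, i.e.\ $\textup{Mu}(E) \ge \beta > 0$.

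For the converse, suppose $\textup{Mu}(E) = \alpha_0 > 0$, fix $0 < \beta < \alpha < \alpha_0$, and let $C$ satisfy $|E_r \cap Q| \le C(\mc L_1(Q)/r)^{-\alpha}|Q|$ whenever $Q$ is centered at a point of $E$ and $0 < r < \mc L_1(Q)$. To check $d^{-\beta} \in A_1$ it is enough to bound $\dashint_Q d^{-\beta}$ by a fixed multiple of $(\esup_Q d)^{-\beta}$ for every cube $Q$. When $\dist(Q, E) > l(Q)$, $d$ is comparable to $\dist(Q, E) \approx \esup_Q d$ on all of $Q$ and this is trivial; otherwise I would pick $x_0 \in E$ with $\dist(x_0, \overline Q) \le 2l(Q)$ and let $Q'$ be the cube centered at $x_0$ with $l(Q') = 5l(Q)$, so $Q \subseteq Q'$ and $|Q'| \le 5^n|Q|$. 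With $r_0 := \mc L_1(Q') > 0$, the layer--cake formula together with the Muckenhoupt bound on $Q'$ for $r < r_0$ (and the trivial bound $|E_r \cap Q'| \le |Q'|$ for $r \ge r_0$) gives
\[\int_{Q'} d^{-\beta}\,dx = \int_0^\infty \beta r^{-\beta - 1}|E_r \cap Q'|\,dr \le |Q'|\Bigl(C\beta\, r_0^{-\alpha}\!\!\int_0^{r_0}\!\! r^{\alpha - \beta - 1}\,dr + \int_{r_0}^\infty\!\! \beta r^{-\beta - 1}\,dr\Bigr) = \Bigl(\tfrac{C\beta}{\alpha - \beta} + 1\Bigr)|Q'|\,r_0^{-\beta},\]
the first integral converging exactly because $\beta < \alpha$. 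Dividing by $|Q|$ and using $\mc L_1(Q') \ge \esup_{Q'} d / C_n \ge \esup_Q d / C_n$ (the geometric comparison and monotonicity of $\esup$ under $Q \subseteq Q'$) yields $\dashint_Q d^{-\beta} \le 5^n C_n^{\beta}\bigl(\tfrac{C\beta}{\alpha - \beta} + 1\bigr)(\esup_Q d)^{-\beta}$. Hence $d^{-\beta} \in A_1$, and \cref{thm:weakly_porous} shows $E$ is weakly porous.

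The bulk of this is routine once \cref{thm:weakly_porous} is available; the one place requiring a little care is the reduction in the last step to cubes that are actually centered at $E$ (handling cubes meeting $E$ only near or on their boundary via the auxiliary cube $Q'$), and I expect that bookkeeping, rather than any single hard estimate, to be the main nuisance. A self-contained argument not using \cref{thm:weakly_porous} is also possible in principle: weak porosity gives the one-scale bound $|E_{c\,\mc L_1(Q)} \cap Q| \le s|Q|$ for all cubes $Q$ meeting $E$, obtained by discarding the width-$c\,\mc L_1(Q)$ boundary layers of the large $E$-free cubes furnished by the definition, and one would then iterate this down the dyadic tree to upgrade it to the power law defining $\textup{Mu}(E)$; but that iteration is delicate, because $\mc L_1(P)$ need not be comparable to $l(P)$ for subcubes $P \subseteq Q$, so one must descend to the scale of the largest hole before reapplying weak porosity — which is precisely why routing through the $A_1$ characterization is the cleaner path.
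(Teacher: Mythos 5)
The paper cites this statement from \cite{anderson2022weakly} without giving a proof of its own, so there is no internal argument to compare against; I will evaluate your proposal on its merits. Your route is correct and clean: prove $\text{Mu}(E) > 0 \Leftrightarrow d^{-\beta} \in A_1$ for some $\beta > 0$ via the two-sided comparison $\tfrac12\mc{L}_1(Q) \le \esup_Q d \le C_n\,\mc{L}_1(Q)$, then invoke \cref{thm:weakly_porous}. The geometric lemma is right (the lower bound from the center of a largest $E$-free dyadic subcube, the upper bound from the $E$-free dyadic cube of side $\approx \rho/(\sqrt n)$ around a near-maximizer of $d$), the Markov step for $A_1 \Rightarrow \text{Mu}(E) > 0$ is exactly as stated, and the layer-cake computation in the converse is correct, with $\beta < \alpha$ being precisely what makes $\int_0^{r_0} r^{\alpha - \beta - 1}\,dr$ converge. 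Two small flags. First, with $\dist(x_0, \overline Q) \le 2l(Q)$ and $l(Q') = 5l(Q)$ the containment $Q \subseteq Q'$ can fail (the per-coordinate offset can reach $3l(Q) > \tfrac52 l(Q)$); either choose $x_0$ with $\dist(x_0,\overline Q) \le (1+\epsilon)l(Q)$ — always possible when $\dist(Q,E) \le l(Q)$ — or take $l(Q') = 7l(Q)$. This changes only a harmless constant. Second, your derivation treats \cref{thm:weakly_porous} as a black box, and both \cref{thm:weakly_porous} and \cref{thm:muck_weakly_porous} come from the same source; the original reference proves the geometric equivalence without the $A_1$ detour (essentially by iterating the weak-porosity condition, as you anticipate in your closing remark). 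Since this paper states \cref{thm:weakly_porous} as an available result, your derivation is legitimate here, and going through $A_1$ is indeed the cleaner path; just be aware it would be circular if one were writing the original proof of both statements from scratch.
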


\begin{theorem}\cite{anderson2022weakly}\label{thm:A_p_weakly_porous}
    Let $E$ be a nonempty subset of $\R^n$, $\alpha \neq 0$, and $1 < p < \infty$. Then
    \begin{enumerate}
        \item \label{thm:A_p_weakly_porous_1} $\dist(\cdot, E)^{-\alpha} \in A_1$ if and only if $0 < \alpha < \textup{Mu}_1(E)$.
        \item \label{thm:A_p_weakly_porous_2} If $E$ is weakly porous then $\dist(\cdot, E)^{-\alpha} \in A_p$ if and only if $-(p - 1)\textup{Mu}_1(E) < \alpha < \textup{Mu}_1(E)$
        \item \label{thm:A_p_weakly_porous_3} If $E$ is weakly porous then $\dist(\cdot, E)^{-\alpha} \in A_\infty$ if and only if $\max(\alpha, 0) < \textup{Mu}_1(E)$.
    \end{enumerate} 
\end{theorem}

Analogous to the dimensional quantities defined above, we give the following definitions.

\begin{definition}\label{def:muck_exponent_p}
Let $E \subset \R^n$ be a nonempty set. 
\begin{enumerate}
    \item For $1 < p < \infty$, let $\text{Mu}_p(E)$ be the supremum of all $\alpha \geq 0$ so that there is some constant $C$ and $0 < s < 1$ (which could depend on $\alpha$) so that for all cubes $Q$ that are centered at a point of $E$,
    \begin{equation*}
    \frac{|E_r \cap Q|}{|Q|} \leq C\left(\frac{\mc{L}_s(Q)}{r}\right)^{-\alpha} \quad \text{for all} \quad r < \mc{L}_s(Q)
    \end{equation*}
    and
    \begin{equation*}
    \frac{|(E_r)^c \cap Q|}{|Q|} \leq C\left(\frac{\mc{L}_s(Q)}{r}\right)^{\frac{\alpha}{p-1}} \quad \text{for all} \quad \mc{L}_s(Q) < r < l(Q).
    \end{equation*}
    \item We also let $\text{Mu}_\infty(E)$ be the supremum of all $\alpha \geq 0$ so that there is some constant $C$ and $0 < s < 1$ (which could depend on $\alpha$) so that for all cubes $Q$ that are centered at a point of $E$,
    \begin{equation*}
        \frac{|E_r \cap Q|}{|Q|} \leq C\left(\frac{\mc{L}_s(Q)}{r}\right)^{-\alpha} \quad \text{for all} \quad r < \mc{L}_s(Q).
    \end{equation*}
\end{enumerate}
If $\mc{L}_s(Q) = 0$ for some $0 < s < 1$ and some cube $Q$, we set $\text{Mu}_p(E) = 0$ and $\text{Mu}_\infty(E) = 0$. 
\end{definition}

We are now ready to prove \cref{thm:Ap_range} which we restate here:

\begin{theorem}\label{thm:Ap_range_2}
    Let $E$ be a nonempty subset of $\R^n$, $\alpha \neq 0$, and $1 < p < \infty$. Then
    \begin{enumerate}
        \item \label{item:Ap_range_Ap} $\dist(\cdot, E)^{-\alpha} \in A_p$ if and only if $-(p - 1)\textup{Mu}_{p'}(E) < \alpha < \textup{Mu}_p(E)$.
        \item \label{item:Ap_range_Ainfty} $\dist(\cdot, E)^{-\alpha} \in A_\infty$ if and only if $-\sup_{p > 1}(p - 1)\textup{Mu}_{p'}(E) < \alpha < \textup{Mu}_\infty(E)$.
    \end{enumerate}
\end{theorem}

\begin{proof}
    Set $w = \dist(\cdot, E)^{-\alpha}$. We start by proving \eqref{item:Ap_range_Ap}. First assume that $\alpha > 0$. \par 
    Suppose that $w \in A_p$. Fix a cube $Q$ and let $r < \mc{L}_s(Q)$. We estimate
    \begin{equation*}
    \dashint_{Q}\dist(y, E)^{-\alpha} \, dy \geq \frac{1}{|Q|}\int_{E_r \cap Q}\dist(y, E)^{-\alpha} \, dy \geq r^{-\alpha}\frac{|E_r \cap Q|}{|Q|}
    \end{equation*}
    and
    \[\mc{L}_s(Q)^\alpha \lesssim \left(\dashint_{Q} \dist(y, E)^{\alpha / (p - 1)} \, dy\right)^{p - 1}\]
    by \cref{prop:median_dist}.
    Therefore
    \begin{equation*}
    \frac{|E_r \cap Q|}{|Q|}\left(\frac{\mc{L}_s(Q)}{r}\right)^\alpha \lesssim C\left(\dashint_{Q}\dist(y, E)^{-\alpha} \, dy\right)\left(\dashint_{Q}\dist(y, E)^{\frac{\alpha}{p-1}} \, dy\right)^{p-1} \lesssim 1.
    \end{equation*}
    On the other hand, by \cref{prop:median_dist},
    \begin{equation*}
        \mc{L}_s(Q)^{-\alpha} \lesssim \dashint_{Q}\dist(y, E)^{-\alpha} \, dy
    \end{equation*}
    and
    \begin{equation*}
    \left(\dashint_{Q}\dist(y, E)^{\frac{\alpha}{p-1}} \, dy\right)^{p-1} \geq \left(\frac{1}{|Q|}\int_{E_r^c \cap Q}\dist(y, E)^{\frac{\alpha}{p-1}} \, dy\right)^{p-1} \geq r^\alpha \left(\frac{|(E_r)^c \cap Q|}{|Q|}\right)^{p-1}.
    \end{equation*}
    Thus for $\mc{L}_s(Q) < r < l(Q)$,  
    \begin{equation*}
    \frac{|(E_r)^c \cap Q|}{|Q|}\left(\frac{\mc{L}_s(Q)}{r}\right)^{\frac{-\alpha}{p-1}} \lesssim C\left(\dashint_{Q}\dist(y, E)^{-\alpha} \, dy\right)^{\frac{1}{p-1}}\left(\dashint_{Q}\dist(y, E)^{\frac{\alpha}{p-1}} \, dy\right) \lesssim 1
    \end{equation*}
    so that $\alpha \leq \text{Mu}_p(E)$. By the self-improvement of $A_p$ weights, there is some $\epsilon > 0$ so that $w^{1+\epsilon} \in A_p$. Following the reasoning above gives 
    \[\alpha < \alpha(1 + \epsilon) \leq \text{Mu}_p(E)\]
    so that $\alpha < \text{Mu}_p(E)$ as wanted. \par
    Conversely, suppose that $\alpha < \text{Mu}_p(E)$. We want to show that $w \in A_p$. Note that if $Q$ is an $E$-free cube, then $\dist(\cdot, E)$ satisfies the $A_1$ condition on $Q$ (see the proof of Theorem 6.5 in \cite{anderson2022weakly}, in particular the case of $P_0$ being an $E$-free cube in \cite[Lemma 5.3]{anderson2022weakly}). We can therefore assume that $Q$ intersects $E$. By enlarging $Q$ by a factor of at most $2$, we may assume that $Q$ is centered at a point of $E$ (recall that \cref{def:muck_exponent_p} uses cubes centered at points of $E$). Since we are assuming $\text{Mu}_p(E) > 0$ we have that $\mc{L}_s(Q) > 0$ for all cubes $Q$. Suppose that $\alpha < \lambda < \text{Mu}_p(E)$. Since $|\ov{E}| = 0$ (which can be proven as in \cite{anderson2022weakly} Page 20, Eq 17), we have that 
    \begin{align}\label{EstimatingApConditionFactor1}
        \int_{Q}\dist(x, E)^{-\alpha} \, dx & = \sum_{- \infty < j \leq \log_2 \mc{L}_s(Q)}\int_{Q \cap (E_{2^j} \setminus E_{2^{j-1}})}\dist(\cdot, E)^{-\alpha} \, dx \\ \nonumber
        & + \int_{Q \cap \{x:\dist(x, E) \geq \mc{L}_s(Q)\}} \dist(x, E)^{-\alpha} \, dx \\ \nonumber
        & \lesssim \sum_{- \infty< j \leq \log_2 \mc{L}_s(Q)}|E_{2^j} \cap Q|2^{-j\alpha} + |Q|\mc{L}_s(Q)^{-\alpha} \\ \nonumber
        & \lesssim |Q| \sum_{- \infty < j \leq \log_2 \mc{L}_s(Q)}\mc{L}_s(Q)^{-\lambda}2^{j(\lambda - \alpha)} + |Q|\mc{L}_s(Q)^{-\alpha} \\ \nonumber
        & \lesssim |Q|\mc{L}_s(Q)^{-\alpha}
    \end{align}
    Similarly, we can show that 
    \begin{equation}\label{EstimatingApConditionFactor2}
        \dashint_{Q}\dist(x, E)^{\frac{\alpha}{p-1}} \, dx \lesssim \mc{L}_s(Q)^{\frac{\alpha}{p-1}}.
    \end{equation}
    Finally,
    \begin{equation*}
    \left(\dashint_{Q} \dist(x, E)^{-\alpha} \, dx\right)\left(\dashint_{Q} \dist(x, E)^{\frac{\alpha}{p-1}} \, dx\right)^{p-1} \lesssim \mc{L}_s(Q)^{-\alpha}(\mc{L}_s(Q)^{\frac{\alpha}{p-1}})^{p - 1} = 1
    \end{equation*}
    so that $w \in A_p$ as wanted. \par
    If $\alpha < 0$, then the result follows from the $\alpha > 0$ case and the basic fact that $w \in A_p$ if and only if $w^{-1 / (p - 1)} \in A_{p'}$. \par 
    Next we prove \eqref{item:Ap_range_Ainfty}. Suppose that $w \in A_\infty$. Then $w \in A_p$ for some $p > 1$. From part \eqref{item:Ap_range_Ap}, we have that 
    \[-(p - 1)\text{Mu}_{p'}(E) < \alpha < \text{Mu}_p(E).\]
    It is clear from the definition that $\text{Mu}_\infty(E) \geq \text{Mu}_q(E)$ for all $q \geq 1$ and thus 
    \[-\sup_{p > 1}(p - 1)\text{Mu}_{p'}(E) < \alpha < \text{Mu}_{\infty}(E).\]
    as wanted. Conversely, suppose that \[-\sup_{p > 1}(p - 1)\text{Mu}_{p'}(E) < \alpha < \text{Mu}_{\infty}(E).\] 
    We show that $w \in A_\infty$. First assume that $\alpha > 0$. Take $0 < \alpha < \alpha' < \text{Mu}_\infty(E)$. Following the proof of \eqref{item:Ap_range_Ap}, we have that
    \[\dashint_{Q} \dist(x, E)^{-\alpha'} \, dx \lesssim \mc{L}_s(Q)^{-\alpha'}\]
    so by \cref{prop:median_dist}, we have 
    \[\dashint_{Q} d^{-\alpha'} \, dx \lesssim \left(\dashint_Q ({d^{-\alpha'}})^{\alpha/\alpha'} \, dx\right)^{\alpha'/\alpha} = \left(\dashint_Q d^{-\alpha} \, dx\right)^{\alpha'/\alpha}.\]
    Therefore $d^{-\alpha}$ satisfies the reverse Hölder inequality and we can conclude that $w \in A_\infty$. Now assume $\alpha < 0$. Then $\alpha > -\sup_{p > 1}(p - 1)\text{Mu}_{p'}(E)$. We can then choose some $p_0 > 1$ so that $\alpha > -(p_0 - 1)\text{Mu}_{p_0'}(E)$. Then by \eqref{item:Ap_range_Ap}, we can conclude that $w \in A_{p_0} \subset A_\infty$.
\end{proof}

\begin{remark}\label{rem:AnyValueOfSWorksForDefinitionOfMedianPorous}
    As a consequence of the proof of \cref{thm:Ap_range}, we can use \emph{any} $0 < s < 1$ in \cref{def:muck_exponent_p}. Another way to see this is that $E$ is median porous if and only if $\mc{L}_s(Q) \approx \mc{L}_t(Q)$ for any $0 < s < t < 1$. This can be seen as a consequence of \cref{thm:BMO_char} and \cref{prop:median_dist}.\par Analogously, also by \cref{thm:BMO_char} and \cref{prop:median_dist}, and by \eqref{ComparisonV1AndM0}, we get that $E$ is weakly porous if and only if $\mc{L}_s(Q) \approx \mc{L}_1(Q)$ for any $0 < s < 1$. Consequently, \emph{once we know a set $E$ is weakly porous}, in \cref{def:AssouadAndMuckenhoupt1Exponent}(2), any $\mc{L}_s(Q)$ can be used instead of $\mc{L}_1(Q)$ to find the value of $\textup{Mu}_1(E) > 0$. (But be careful: for a given $\alpha > 0$, just checking \cref{def:AssouadAndMuckenhoupt1Exponent}(2) with  $\mc{L}_s(Q)$ for a fixed $0 < s < 1$, instead of $\mc{L}_1(Q)$, simply proves that $E$ is median porous, by \cref{def:muck_exponent_p}(2), not that $E$ is weakly porous.)
\end{remark}

\begin{corollary}
    $\textup{Mu}_\infty(E) > 0$ if and only if $f = \log \dist(\cdot, E) \in BMO$.
\end{corollary}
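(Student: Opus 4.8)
The plan is to deduce this immediately from the quantitative characterization \cref{thm:Ap_range} together with the standard $BMO$--$A_\infty$ dictionary recorded in \cref{lem:prelim_BMO_Ap}.

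First I would translate membership $f = \log\dist(\cdot, E) \in BMO$ into a weight statement. Applying \cref{lem:prelim_BMO_Ap}\,(3) to the weight $w = \dist(\cdot, E)^{-1}$, and using that $BMO$ is closed under $f \mapsto -f$ and under multiplication by scalars, one gets that $f \in BMO$ if and only if $\dist(\cdot, E)^{-\alpha} \in A_\infty$ for some $\alpha > 0$. Next I would invoke \cref{thm:Ap_range}\eqref{item:Ap_range_Ainfty}: for $\alpha \neq 0$, $\dist(\cdot, E)^{-\alpha} \in A_\infty$ exactly when $-\max_{p > 1}(p-1)\textup{Mu}_{p'}(E) < \alpha < \textup{Mu}_\infty(E)$. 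Since every $\textup{Mu}_{p'}(E) \geq 0$ by \cref{def:muck_exponent_p}, the left endpoint is $\leq 0$, so for a positive $\alpha$ this two-sided condition collapses to the single constraint $\alpha < \textup{Mu}_\infty(E)$. Putting these together, $f \in BMO$ if and only if there exists $\alpha$ with $0 < \alpha < \textup{Mu}_\infty(E)$, i.e. if and only if $\textup{Mu}_\infty(E) > 0$.

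There is essentially no obstacle; the corollary is a bookkeeping consequence of the two cited theorems. The only point worth spelling out is why the negative-exponent endpoint in \cref{thm:Ap_range}\eqref{item:Ap_range_Ainfty} cannot prevent the existence of a valid positive $\alpha$, which is precisely the non-negativity built into the definition of the exponents $\textup{Mu}_{p'}(E)$. (In the degenerate case $E = \R^n$ both sides fail: $\dist(\cdot, E) \equiv 0$, so $f \notin BMO$, while $\mc{L}_s(Q) = 0$ for every cube forces $\textup{Mu}_\infty(E) = 0$; thus the stated equivalence still holds, trivially.) Alternatively, one could route through \cref{thm:BMO_char_dist} and the remark that median porosity is equivalent to $\mc{L}_s(Q) \approx \mc{L}_t(Q)$, but the argument above via \cref{thm:Ap_range} is the most direct.
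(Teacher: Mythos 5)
Your argument is correct and is essentially the paper's own proof: both directions go through \cref{lem:prelim_BMO_Ap} to translate $\log\dist(\cdot,E)\in BMO$ into $\dist(\cdot,E)^{-\alpha}\in A_\infty$ for some $\alpha>0$, and then apply \cref{thm:Ap_range}\eqref{item:Ap_range_Ainfty} to identify that range of $\alpha$ with $(0,\textup{Mu}_\infty(E))$. Your extra remarks about the irrelevance of the negative endpoint and the degenerate case $E=\R^n$ are accurate but not needed beyond what the paper records.
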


\begin{proof}
    If $\text{Mu}_\infty(E) > 0$ then for $0 < \alpha < \text{Mu}_\infty(E)$, $\dist(\cdot, E)^{-\alpha} \in A_\infty$ so that $f \in BMO$. If $f \in BMO$ then $\dist(\cdot, E)^{-\alpha} \in A_\infty$ for some $\alpha > 0$ and thus $\text{Mu}_\infty(E) > \alpha > 0$.
\end{proof}

\begin{corollary}\label{PropertiesOfMu_pFunction} Given a set $E$, the following properties hold for $\textup{Mu}_p(E)$ as a function of $p$.

\begin{enumerate}
    \item \label{PropertiesOfMu_pFunction_1} If $E$ is weakly porous, then $\textup{Mu}_1(E) = \textup{Mu}_p(E) = \textup{Mu}_\infty(E) >0$ for all $1 < p < \infty$.

    \item \label{PropertiesOfMu_pFunction_2} $[1, \infty] \ni p \to \textup{Mu}_p(E) \in [0, \infty]$ is non-decreasing.

    \item \label{PropertiesOfMu_pFunction_3} For all $1 < p \leq \infty$, $\textup{Mu}_p(E) = \sup_{1 \leq q < p} \textup{Mu}_q(E)$

    \item \label{PropertiesOfMu_pFunction_4} $[1, \infty] \ni p \to \textup{Mu}_p(E) \in [0, \infty]$ is left continuous.

    \item \label{PropertiesOfMu_pFunction_5} Given a set $E$, if there exists $1 < p < \infty$ such that $\textup{Mu}_p(E) > 0$, then for all $1 < q \leq \infty$, $\textup{Mu}_q(E) > 0$.

    \item \label{PropertiesOfMu_pFunction_6} $\textup{Mu}_\infty(E) \leq n$. Moreover, if $E$ is bounded and the Minkowski (or box) dimension of $E$ exists (denoted $\dim_M(E)$), then $\textup{Mu}_\infty(E) \leq n - \dim_M(E)$.

\end{enumerate}
\end{corollary}

\begin{proof}

For \eqref{PropertiesOfMu_pFunction_1}, combine the statements of Theorem $\subref{thm:A_p_weakly_porous}{thm:A_p_weakly_porous_2}$ and Theorem $\subref{thm:A_p_weakly_porous}{thm:A_p_weakly_porous_3}$ with those of Theorem $\subref{thm:Ap_range_2}{item:Ap_range_Ap}$ and Theorem $\subref{thm:Ap_range_2}{item:Ap_range_Ainfty}$.

For \eqref{PropertiesOfMu_pFunction_2}, since $\text{Mu}_p(E)$ is the supremum of nonnegative quantities, it is nonnegative. Furthermore, since for $q < p$, $A_q \subset A_p$, for $q \in [1, \infty), p \in (1, \infty]$, then the rest of \eqref{PropertiesOfMu_pFunction_2}, keeping in mind \cref{rem:AnyValueOfSWorksForDefinitionOfMedianPorous}, follows from: for $1 < q < p < \infty$, Theorem $\subref{thm:Ap_range_2}{item:Ap_range_Ap}$; for $1 < q < p = \infty$, Definition \ref{def:muck_exponent_p}; for $q = 1$, from Theorem \ref{thm:muck_weakly_porous} (for the case $\text{Mu}_1(E) =0$) and from \eqref{PropertiesOfMu_pFunction_1}.

For \eqref{PropertiesOfMu_pFunction_3}, the inequality $(\geq)$ follows from \eqref{PropertiesOfMu_pFunction_2}. Regarding the inequality $(\leq)$, if $\text{Mu}_p(E) = 0$, there is noting to prove by the inequality $(\geq)$. So we may assume that $\text{Mu}_p(E) > 0$. Then for every $0 < \alpha < \text{Mu}_p(E)$, the weight $w_\alpha = \dist(\cdot, E)^{-\alpha} \in A_p$. But for each such $\alpha$, $w_\alpha \in A_q$ for some $q < p$, since $A_p = \bigcup_{q < p}A_q$ for $1 < p \leq \infty$. Thus $\alpha < \text{Mu}_q(E)$. 

\eqref{PropertiesOfMu_pFunction_4} follows from \eqref{PropertiesOfMu_pFunction_2} and \eqref{PropertiesOfMu_pFunction_3}.

\eqref{PropertiesOfMu_pFunction_5} follows from Theorem $\subref{thm:Ap_range_2}{item:Ap_range_Ap}$, since if there exists an $\alpha > 0$ such that $w_\alpha = \dist(\cdot, E)^{-\alpha} \in A_p$, then there exists a $\beta > 0$ such that $w_\beta = \dist(\cdot, E)^{-\beta} \in A_q$, by Proposition $\subref{prop:prelim_BMO_Ap}{prelim_BMO_Ap_2}$.

The first assertion in \eqref{PropertiesOfMu_pFunction_6} 
follows from \cref{def:muck_exponent_p}(2), since, if $Q$ is centered at $x \in E$, and $0 < r \lesssim l(Q)$, then $r^n \lesssim |E_r \cap Q|$. 

Regarding the second assertion in \eqref{PropertiesOfMu_pFunction_6}, we somewhat follow the reasoning in \cite[Remark 6.8]{anderson2022weakly}. If the Minkowski (or box) dimension of $E$ exists, then \cite[Theorem 4.4]{ZubrinicMinkowskiContents} gives that $\int_{E_r}\dist(y, E)^{-\alpha} \, dy < \infty$ if and only if $\alpha < n - \dim_M(E)$. Since $E$ is assumed to be bounded, $\int_{E_r}\dist(y, E)^{-\alpha} \, dy < \infty$ is equivalent to local integrability of $\dist(y, E)^{-\alpha}$, which is a necessary condition for $\dist(y, E)^{-\alpha}$ to be an $A_\infty$ weight. The conclusion follows from Theorem \subref{thm:Ap_range_2}{item:Ap_range_Ainfty}.

\end{proof}

\begin{remark}
Corollary $\subref{PropertiesOfMu_pFunction}{PropertiesOfMu_pFunction_5}$ is false for $q = 1$, since there exists a set $E$ such that $\text{Mu}_1(E) = 0 < \text{Mu}_p(E)$ for some $p>1$ (and hence for all $p>1$ by Corollary $\subref{PropertiesOfMu_pFunction}{PropertiesOfMu_pFunction_5}$), as shown in the example in \cite{anderson2022weakly} or in the example in \cref{sec:example}. In fact, for the set $E \subset \R$ constructed in Section 8 of \cite{anderson2022weakly}, it was shown that $\dist(\cdot, E)^{-\alpha} \in A_p$ for all $0 < \alpha < 1$ and $1 < p \leq \infty$. Therefore, since $\text{Mu}_p(E) \leq n$ for $1 \leq p \leq \infty$, we can conclude that $\text{Mu}_p(E) = 1$ for all $p > 1$ by \cref{thm:Ap_range}.

\end{remark}

We summarize the various porosity conditions and their corresponding properties in the following table (the proof that the prototypical example of median porous set in the table is indeed median porous is precisely the content of \cref{sec:example}): 
\begin{center}
    \begin{tabular}{ |c|c|c|c| } 
    \hline
            & Porous & Weakly porous & Median porous \\ 
     \hline
     \makecell{Dimension \\ characterization} & $\text{\underline{co dim}}_{A}(E) > 0$ & $\text{Mu}_1(E) > 0$ & $\text{Mu}_\infty(E) > 0$ \\ 
     \hline
     \makecell{Distance \\ function \\ characterization} & \makecell{$\dashint_{Q}\dist(x, E)^{-\alpha} \, dx \lesssim l(Q)^{-\alpha}$ \\ for some $\alpha > 0$ and all \\ cubes $Q$ that intersect $E$} & \makecell{$\dist(\cdot, E)^{-\alpha} \in A_1$ \\ for some $\alpha > 0$} & \makecell{$\dist(\cdot, E)^{-\alpha} \in A_\infty$ \\ for some $\alpha > 0$} \\ 
     \hline
     \makecell{Prototypical \\ example} & $E \subsetneq \R^n$ a subspace & $\Z$ & $\{\pm m^\gamma : m \in \Z\}$, $0 < \gamma < 1$ \\ 
     \hline
    \end{tabular}
\end{center}

\begin{remark}
    We conclude by remarking that we could equivalently define $\text{Mu}_p(E)$ for $1 < p \leq \infty$ with balls instead of cubes. Indeed, for a ball $B$, we could consider the length quantity 
    \[\mc{L}_s(E, B) = \mc{L}_s(B) :=\] 
    \[\sup \left\{\delta > 0 : \exists E\text{-free, disjoint balls } B_1, ..., B_k \subset B \text{ satisfying } r(B_i) \geq \delta, \sum_{i = 1}^{k}|B_i| \geq (1 - s)|B|\right\}\]
    and replace the all the definitions with this quantity in the obvious way. There is a (slightly more complicated) analogue of \cref{prop:median_dist} for balls, and the proof of \cref{thm:Ap_range} then goes through without change. We leave the details to the interested reader. 
\end{remark}

\section{An example of a median porous set}\label{sec:example}

In \cite{anderson2022weakly}, the authors provided examples of a family of weakly porous sets that are not porous, and also an example of a set $E \subset \R$ which is not weakly porous, but such that $\dist(\cdot, E)^{- \alpha} \in A_p\setminus A_1$ for all $0 < \alpha < 1$ and all $1 <p <\infty$ (in the language of this manuscript, a median porous set which is not weakly porous). Here we provide another family of examples of sets that are median porous set but are not weakly porous.

Let $0 < \gamma < 1$ be some fixed value and consider the set 
\[E = E_\gamma = \{ \pm m^{\gamma} : m \in \N\} \subset \R.\]
See \cref{fig:set}. In this section, we provide a geometric argument to show that $E$ is median porous but isn't weakly porous. It would follow immediately from \cref{thm:BMO_char_dist} that \[-\log \dist(\cdot, E) \in BMO \setminus BLO.\]
\begin{theorem}\label{thm:example}
    The set $E$ is median porous but not weakly porous. In other words, 
    \[-\log \dist(\cdot, E) \in BMO \setminus BLO.\]
\end{theorem}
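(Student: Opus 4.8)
The plan is to verify the two geometric assertions of the theorem — that $E$ is \emph{not} weakly porous and that $E$ \emph{is} median porous — and then to quote \cref{thm:weakly_porous} and \cref{thm:BMO_char_dist} for the reformulation $-\log d \in BMO \setminus BLO$, where $d := \dist(\cdot, E)$. It is convenient to record the structure of $E = E_\gamma$: the components of $\R \setminus E$ are the long gap $(-1,1)$ together with the gaps $(\pm m^\gamma, \pm(m+1)^\gamma)$, $m \geq 1$, whose lengths $\ell_{(m)} := (m+1)^\gamma - m^\gamma$ obey $\tfrac{\gamma}{2}m^{\gamma-1} \leq \ell_{(m)} \leq \gamma m^{\gamma-1}$ by the mean value theorem; note $|E| = 0$, so $\log d$ is a.e.\ finite.

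For the non-porosity, fix $0 < \delta, s < 1$; I would show that the cube $Q_0 = [-2^k, 2^k)$ violates the weak porosity inequality once $k$ is large. Since every $E$-free dyadic subinterval lies in one of the gaps above, $\mc{V}_1(Q_0)$ is at most the longest gap length $2$, and it is bounded below by a positive absolute constant; hence every cube of $\mc{S}^1_\delta(Q_0)$ has side $\gtrsim \delta$ and so lies in a gap of $E$ of length $\gtrsim \delta$. By the bound on $\ell_{(m)}$ these gaps occur only for $m$ bounded in terms of $\delta$, hence only inside a fixed window $[-R_\delta, R_\delta]$ (along with $(-1,1)$), so $\sum_{Q \in \mc{S}^1_\delta(Q_0)}|Q| \leq 2R_\delta + 2 < (1-s)|Q_0|$ for $k$ large. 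As $\delta, s$ were arbitrary, $E$ is not weakly porous, and so $-\log d \notin BLO$ by \cref{thm:weakly_porous}.

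For the positive part the key is to compare $d$ with the explicit weight $\phi(x) := \max(1,|x|)^{-\kappa}$, $\kappa := \gamma^{-1} - 1 > 0$. First, $\log \phi(x) = -\kappa(\log|x|)_+ \in BMO(\R)$, since $\log|x| \in BMO$ and $BMO$ is a vector space stable under $g \mapsto |g|$, hence under $g \mapsto \max(g,0)$; so by Proposition \subref{prop:medians}{prop:medians_8} and \subref{prop:medians}{prop:medians_10} one has $M_t(\phi, Q) \lesssim_{s,t,\gamma} M_s(\phi, Q)$ for every cube $Q$. Second, on the gap $(m^\gamma, (m+1)^\gamma)$ one has $\phi \approx m^{\gamma-1} \approx \ell_{(m)}$ uniformly, and since $d$ restricted there is the tent $\min(x - m^\gamma, (m+1)^\gamma - x)$ it follows that $d \leq \phi$ on the gap and $d \geq c_\gamma \phi$ on its middle half — likewise on $(-1,1)$ (where $\phi \equiv 1$, $d = 1 - |x|$) and on the negative axis. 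Thus $d \leq \phi$ a.e., so $M_t(d, Q_0) \leq M_t(\phi, Q_0)$ by Proposition \subref{prop:medians}{prop:medians_4}; and summing the lower bound over the gaps contained in $Q_0$ gives, for all $\lambda > 0$,
\[ |\{x \in Q_0 : d(x) \geq \lambda\}| \;\geq\; \tfrac12 \big| \{ x \in Q_0 : \phi(x) \geq 2c_\gamma^{-1}\lambda \} \big| \;-\; \ell_{\max}(Q_0), \]
where $\ell_{\max}(Q_0)$ is the length of the longest component of $Q_0 \setminus E$ and the error absorbs the at most two components meeting $\partial Q_0$. When $\ell_{\max}(Q_0) \leq \tfrac1{10}|Q_0|$, this inequality, together with $d \leq \phi$ a.e.\ and the median comparison for $\phi$, yields $M_t(d, Q_0) \lesssim_\gamma M_s(d, Q_0)$ for a suitably chosen fixed pair $0 < s < t < 1$.

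It remains to treat the cubes with $\ell_{\max}(Q_0) > \tfrac1{10}|Q_0|$ — which forces $\ell_{\max}(Q_0) \approx |Q_0|$, as $\ell_{\max}(Q_0) \leq |Q_0|$ always. Here I would argue directly: one checks $\max_{Q_0} d \leq \ell_{\max}(Q_0)$, and, using $\ell_{(m)} \approx m^{\gamma-1}$ together with the finiteness of $E$ in bounded windows, that every component of $Q_0 \setminus E$ has length $\gtrsim_\gamma |Q_0|$; hence $M_s(d, Q_0) \approx_\gamma |Q_0|$ for $s$ in a fixed subinterval of $(0,1)$, and the medians at two heights are again comparable. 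In either case $\sup_{Q_0}[M_t(\log d, Q_0) - M_s(\log d, Q_0)] < \infty$, so $\log d \in BMO$ by \cref{thm:BMO_char}; hence $E$ is median porous by \cref{thm:BMO_char_dist} (equivalently, inserting $M_t(d, Q_0) \lesssim_\gamma M_s(d, Q_0)$ into \cref{prop:median_dist} gives $\mc{V}_{t'}(Q_0) \lesssim_\gamma \mc{V}_{s'}(Q_0)$, which is the median porosity condition), and with the second part this gives $-\log d \in BMO \setminus BLO$. The step I expect to be hardest is making the $d$-versus-$\phi$ comparison uniform over all cubes: the displayed bound carries the error $\ell_{\max}(Q_0)$, harmless only when no single gap dominates $Q_0$, so the dominant-gap cubes genuinely require the separate treatment, whose justification leans on the polynomial gap asymptotics and on $E_\gamma$ being finite in bounded regions rather than on the weight $\phi$.
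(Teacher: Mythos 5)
Your argument is correct in outline, but it proceeds by a genuinely different route than the paper, so a comparison is in order. The paper works directly with the length quantities $\sq{\mc{L}}_s(I)$, establishes the single-interval estimate $\sq{\mc{L}}_s(I) \approx_\gamma (a+(1-s)(b-a))^{(\gamma-1)/\gamma}$ for ``$s$-good'' intervals whose endpoints align with $E$ (\cref{lem:example_good}), and then propagates this to arbitrary intervals by two case analyses: a bounded number of points of $E$ (\cref{lem:small_N}) versus a large number, the latter by sandwiching between good intervals (\cref{lem:large_N}). You instead bypass $\sq{\mc{L}}_s$ entirely and argue at the level of medians of $d$, by introducing the explicit model weight $\phi(x) = \max(1,|x|)^{-\kappa}$ with $\kappa = \gamma^{-1}-1$. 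The payoff is that the global $BMO$ content of the problem is siphoned off into the standard fact $\log|x| \in BMO$ (hence $\log\phi \in BMO$), and all that remains is a local, scale-by-scale comparison $d \approx \phi$ on each gap; the distribution inequality $|\{d \geq \lambda\} \cap Q_0| \geq \tfrac12|\{\phi \geq c_\gamma^{-1}\lambda\}\cap Q_0| - \ell_{\max}(Q_0)$, combined with $d \leq \phi$, then transfers the median comparability from $\phi$ to $d$ whenever no single gap dominates $Q_0$. This is a nice and arguably more conceptual argument — it isolates $\phi$ as the ``profile'' of the gap lengths — at the cost of a non-local error term $\ell_{\max}(Q_0)$, which forces a separate treatment of the dominant-gap cubes, whereas the paper's small-$N$ lemma handles essentially the same family by a purely combinatorial count.

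One assertion in the dominant-gap case should be repaired: it is not true that ``every component of $Q_0 \setminus E$ has length $\gtrsim_\gamma |Q_0|$'' — the two boundary components $Q_0 \cap \mc{I}_{m_-}$ and $Q_0 \cap \mc{I}_{m_+}$ can be arbitrarily short (take $Q_0 = (m^\gamma - \epsilon,\, m^\gamma + L)$ with $\epsilon$ tiny). Fortunately you do not need all components: since $Q_0$ intersects $E$, every component of $Q_0 \setminus E$ has at least one endpoint in $E$, hence $\sup_{Q_0} d \leq \ell_{\max}(Q_0) \leq |Q_0|$ as you claim, and the \emph{longest} such component $(m^\gamma, m^\gamma + L)$ with $L = \ell_{\max}(Q_0) > |Q_0|/10$ already produces
\[
|\{x \in Q_0 : d(x) \geq L/4\}| \;\geq\; L/2 \;>\; |Q_0|/20,
\]
since $d$ restricted there is the half-tent $\min(x - m^\gamma, (m+1)^\gamma - x)$ and $m^\gamma + L \leq (m+1)^\gamma$. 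This gives $M_{s}(d, Q_0) \gtrsim |Q_0| \approx M_t(d, Q_0)$ for, say, $s = 19/20$ and any $t \in (s, 1)$, and that pair can be reconciled with the pair from the non-dominant case by taking the larger $s$ and the smaller $t$, as long as the first case is run with $t$ close enough to $1$. With that correction (and the easy observation, which you should state, that cubes not meeting $E$ have $M_t(d, Q)/M_s(d, Q) \approx 1$ because $d$ is comparable to a constant there), the argument closes and, via \cref{thm:BMO_char} and \cref{thm:BMO_char_dist}, yields $-\log d \in BMO \setminus BLO$ as claimed. The non-porosity half is essentially the paper's argument with a symmetric window $[-2^k, 2^k)$ in place of $[0, m^\gamma)$, and is fine.
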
 

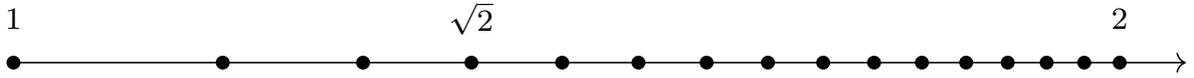
\begin{figure}[!ht]
    \centering
    \resizebox{1\textwidth}{!}{%
    \begin{tikzpicture}[scale=2]
    \tikzstyle{every node}=[font=\SMALL]
    \draw[->] (5,8.5) to[short] (10.3,8.5);
    \node [font=\SMALL] at (5,8.7) {$1$};
    \node at (5,8.5) [circ] {};
    \node at (5.946,8.5) [circ] {};
    \node at (6.58,8.5) [circ] {};
    \node at (7.07,8.5) [circ] {};
    \node [font=\SMALL] at (7.07,8.7) {$\sqrt{2}$};
    \node at (7.48,8.5) [circ] {};
    \node at (7.825,8.5) [circ] {};
    \node at (8.133,8.5) [circ] {};
    \node at (8.41,8.5) [circ] {};
    \node at (8.66,8.5) [circ] {};
    \node at (8.89,8.5) [circ] {};
    \node at (9.106,8.5) [circ] {};
    \node at (9.306,8.5) [circ] {};
    \node at (9.494,8.5) [circ] {};
    \node at (9.67,8.5) [circ] {};
    \node at (9.84,8.5) [circ] {};
    \node at (10,8.5) [circ] {};
    \node [font=\SMALL] at (10,8.7) {$2$};
    \end{tikzpicture}
    }%
    \caption{The set $E_\gamma = \{\pm m^\gamma : m \in \N\}$ for $\gamma = 0.25$.}
    \label{fig:set}
\end{figure}

\begin{remark}
   Let $f$ be an even function on $\R$. To show that $f \in BMO$, it suffies to show that 
   \[\sup_{I \subset [0, \infty)}\dashint_{I}|f - f_I| = C < \infty.\]
   Indeed, suppose this supremum is finite. Given an arbitrary interval $I$, write $I = I^- \cup I^+$ where $I^- \subset (-\infty, 0]$ and $I^+ \subset [0, \infty)$. Without loss of generality, assume $-I^- \subset I^+$. Then since $f$ is even,
   \[\int_{I}|f - \ang{f}_{I_+}| \, dx \leq 2 \int_{I_+}|f - \ang{f}_{I^+}| \, dx \leq 2C|I^+| \leq 2C|I|\]
   and thus $f \in BMO$. As a consequence of this remark, to show that $E$ is median porous, we only need to verify the median porosity condition on subsets of $[0, \infty)$. To see this recall that in the proof of \cref{thm:BMO_char}, the $BMO$ condition on an interval $I$ depends only on the median differences of subsets of $I$. Therefore, by the proof of \cref{thm:BMO_char_dist}, if we show the  median porosity condition on subsets of $[0, \infty)$, it will follow that with $f = \log \dist(\cdot, E)$,
   \[\sup_{I \subset [0, \infty)}\dashint_{I}|f - f_I| = C < \infty.\]
   Since $f$ is even, it will follow that $f \in BMO$ and so $E$ is median porous.
\end{remark}

Instead of computing $\mc{L}_s(I)$, we will compute the more or less equivalent quantities 
\[\sq{\mc{L}}_s(E, I) = \sq{\mc{L}}_s(I) :=\] 
    \[\sup \left\{\delta > 0 : \exists E\text{-free, disjoint open intervals } I_1, ..., I_k \subset I \text{ s.t. } |I_j| \geq \delta, \sum_{j = 1}^{k}|I_i| \geq (1 - s)|I|\right\}\]
for $0 < s < 1$ and 
\[\sq{\mc{L}}_1(E, I) = \sq{\mc{L}}_1(I) = \sup\{|\sq{I}| : \sq{I} \subset I \text{ is an } E\text{-free open interval}\}.\] 
The only difference between $\mc{L}_s(I)$ and $\sq{\mc{L}}_s(I)$ is that we allow arbitrary sub-intervals of $I$ instead of just dyadic ones and the subintervals are open. This will be more convenient to work with because of the non-dyadic structure of $E$. Note that we have an analogue of \cref{prop:median_dist} for $\sq{\mc{L}}$:
\begin{proposition}\label{prop:median_dist_intervals}
    Let $E$ be a nonempty subset of $\R$ with $E \neq \R$. Then for any $0 < s < t < 1$ and interval $I_0$ that intersects $E$, 
    \[M_s(d, I_0) \lesssim \sq{\mc{L}}_s(I_0) \lesssim_{s, t} M_t(d, I_0).\]
\end{proposition}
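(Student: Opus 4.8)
The plan is to deduce both inequalities from the cube-based estimate of \cref{prop:median_dist}, specialized to $n=1$ (where $\mc{V}_s(I_0)=\mc{L}_s(I_0)$), together with a short direct argument for the upper bound that reruns the second half of the proof of \cref{prop:median_dist} using arbitrary open intervals in place of dyadic cubes. Here $d=\dist(\cdot,E)$.

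For the lower bound $M_s(d,I_0)\lesssim\sq{\mc{L}}_s(I_0)$, I would first note that $\mc{L}_s(I_0)\le\sq{\mc{L}}_s(I_0)$: every half-open $E$-free dyadic subinterval $[a,b)\in\mc{D}(I_0)$ yields the $E$-free \emph{open} subinterval $(a,b)\subset I_0$ of the same length, so a maximal --- hence disjoint and finite --- family of such dyadic intervals covering a $(1-s)$-proportion of $I_0$ produces a disjoint family of open $E$-free subintervals of $I_0$ of the same lengths and total measure. Thus $\sq{\mc{L}}_s(I_0)\ge\mc{V}_s(I_0)$, and \cref{prop:median_dist} gives $M_s(d,I_0)\lesssim\mc{V}_s(I_0)\le\sq{\mc{L}}_s(I_0)$.

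For the upper bound $\sq{\mc{L}}_s(I_0)\lesssim_{s,t}M_t(d,I_0)$, I would fix $\delta>0$ and a disjoint family of $E$-free open subintervals $I_1,\dots,I_k\subset I_0$ with $|I_j|\ge\delta$ and $\sum_j|I_j|\ge(1-s)|I_0|$, and, for a parameter $0<\eta<1$, replace each $I_j$ by its concentric open subinterval $\eta I_j$ of length $\eta|I_j|$. Since $I_j$ is $E$-free, the nearest point of $E$ to any $x\in I_j$ lies outside $I_j$, so $d(x)\ge\dist(x,\partial I_j)$; for $x\in\eta I_j$ this is at least $\tfrac{1-\eta}{2}|I_j|\ge\tfrac{1-\eta}{2}\delta$. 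The $\eta I_j$ are disjoint subsets of $I_0$ of total measure $\ge\eta(1-s)|I_0|$, which forces
\[\left|\left\{x\in I_0:d(x)<\tfrac{1-\eta}{2}\delta\right\}\right|\le\bigl(1-\eta(1-s)\bigr)|I_0|,\]
hence $\tfrac{1-\eta}{2}\delta\le M_{1-\eta(1-s)}(d,I_0)$ by the definition of the median. Choosing $\eta=\tfrac{1-t}{1-s}\in(0,1)$ makes the index equal to $t$ and yields $\delta\le\tfrac{2(1-s)}{t-s}M_t(d,I_0)$; taking the supremum over admissible $\delta$ finishes the estimate.

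I do not expect a serious obstacle: both halves are short adaptations of \cref{prop:median_dist}. The one point needing care is that, unlike dyadic subcubes of $I_0$, the open intervals in the definition of $\sq{\mc{L}}_s$ and their concentric shrinkings need not tile $I_0$, so one must keep $\eta I_j\subset I_j\subset I_0$ throughout. This is also why I route the lower bound through the dyadic quantity $\mc{V}_s$ --- whose defining cubes sit honestly inside $I_0$ --- rather than arguing directly with a cover by intervals that might spill past the endpoints of $I_0$.
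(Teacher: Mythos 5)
Your proposal is correct and follows essentially the same route as the paper, which simply notes that the proof is ``almost identical to the proof of \cref{prop:median_dist}'' and omits the details; your adaptation (lower bound via $\mc{V}_s(I_0)\le\sq{\mc{L}}_s(I_0)$, upper bound by rerunning the concentric-shrinking argument with open intervals, choosing $\eta=\tfrac{1-t}{1-s}$) is exactly that translation, and the constant bookkeeping is accurate.
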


\begin{proof}
    The proof is almost identical to the proof of \cref{prop:median_dist} so we omit the details.
\end{proof}

Therefore to show that $\log \dist(\cdot, E_\gamma) \in BMO$, it suffices to show that there are some $0 < s < t < 1$ so that
\begin{equation}\label{eqn:example_hypothesis}
    \sq{\mc{L}}_t(I) \lesssim \sq{\mc{L}}_s(I)
\end{equation}
for all open intervals $I$ that intersect $E$.\par 
     
Before proving \cref{thm:example}, we introduce some notation. 
For an interval $I = (a, b) \subset [0, \infty)$, we define
\[n_l(I) := \min\{n \in \N : n^\gamma \geq a\}, \quad n_r(I) := \max\{n \in \N : n^\gamma \leq b\}.\]
For each $k \geq 0$, define the standard $E$-free intervals
\[\mc{I}_k := (k^\gamma, (k + 1)^\gamma).\]
It follows from the mean value theorem that for all $0 < x < y$, we have 
\begin{equation}\label{eqn:mean_value_thm}
    \gamma y^{\gamma - 1}|x - y| \leq |x^\gamma - y^\gamma| \leq \gamma x^{\gamma - 1}|x - y|.
\end{equation}
From this fact, it follows in particular that
\begin{equation}\label{eqn:standard_length}
    |\mc{I}_k| \approx_\gamma (k+1)^{\gamma - 1}.
\end{equation}
For $0 < s < 1$, we say that an interval $I = (a, b) \subset [0, \infty)$ is $s$-good if 
\[a \in E \quad \text{and} \quad a + (1 - s)(b - a) \in E.\]
See \cref{fig:good_interval}(i). \par
The goal is to estimate $\sq{\mc{L}}_s(I)$ for all intervals $I \subset (0, \infty)$. We do this for three types of intervals in the following three lemmas. \par
It turns out that $\sq{\mc{L}}_s(I)$ is quite easy to compute for $s$-good intervals.
\begin{lemma}\label{lem:example_good}
    Let $I_0 = (a, b)$ be an $s$-good interval. Then 
    \[\sq{\mc{L}}_s(I_0) \approx_\gamma (a + (1 - s)(b - a))^{(\gamma - 1) / \gamma}.\]
\end{lemma}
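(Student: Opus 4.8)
The plan is to estimate $\sq{\mc{L}}_s(I_0)$ for an $s$-good interval $I_0 = (a,b)$ by exhibiting an optimal (up to constants) family of $E$-free intervals, and then checking no family does essentially better. Write $m := a^{1/\gamma}$ and $M := (a + (1-s)(b-a))^{1/\gamma}$, both integers since $I_0$ is $s$-good. The point $c := a + (1-s)(b-a)$ splits $I_0$ into a left piece $(a,c)$ of length $(1-s)(b-a)$ and a right piece $(c,b)$ of length $s(b-a)$; note $c^{1/\gamma} = M$. The standard $E$-free intervals $\mc{I}_k = (k^\gamma, (k+1)^\gamma)$ for $m \le k < M$ tile $(a,c)$ exactly, and their lengths are $\approx_\gamma (k+1)^{\gamma-1}$ by \eqref{eqn:standard_length}; since $\gamma - 1 < 0$, the \emph{smallest} of these has length $\approx_\gamma M^{\gamma - 1} = c^{(\gamma-1)/\gamma}$. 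Thus using just the standard intervals inside the left piece we already fill a proportion $1-s$ of $I_0$ with $E$-free intervals each of length $\gtrsim_\gamma c^{(\gamma-1)/\gamma}$, which gives the lower bound $\sq{\mc{L}}_s(I_0) \gtrsim_\gamma c^{(\gamma-1)/\gamma}$.

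For the upper bound, suppose $I_1, \dots, I_k \subset I_0$ are disjoint $E$-free open intervals with $|I_j| \ge \delta$ for all $j$ and $\sum_j |I_j| \ge (1-s)|I_0|$. The key geometric fact is that an $E$-free subinterval of $[0,\infty)$ lying to the right of the point $c$ — i.e., inside $(c, b)$, where the consecutive points of $E$ are $M^\gamma, (M+1)^\gamma, \dots$ — has length at most $\max_{k \ge M} |\mc{I}_k| \approx_\gamma M^{\gamma-1} = c^{(\gamma-1)/\gamma}$, because an $E$-free interval cannot contain any point of $E$ and the gaps between consecutive points of $E$ in that range are all $\lesssim_\gamma c^{(\gamma-1)/\gamma}$ and decreasing. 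Hence every $I_j$ either is contained in $[a, c]$ up to an error interval meeting $c$, or has length $\lesssim_\gamma c^{(\gamma-1)/\gamma}$. The total length available in $[a,c]$ is only $(1-s)(b-a) = (1-s)|I_0|$, so the $I_j$'s contained there contribute at most that much; but the hypothesis requires $\sum_j |I_j| \ge (1-s)|I_0|$ — equality to within the measure of a single interval straddling $c$. A short accounting argument then forces $\delta \le \min_j |I_j| \lesssim_\gamma c^{(\gamma-1)/\gamma}$: if every $I_j$ had length $> \Lambda c^{(\gamma-1)/\gamma}$ for large $\Lambda$, then each $I_j$ with length exceeding the right-side bound must fit inside $[a,c]$ (minus one exceptional interval), and disjointness plus the length constraint $(1-s)|I_0|$ on $[a,c]$ leaves no room. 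Taking the supremum over such families gives $\sq{\mc{L}}_s(I_0) \lesssim_\gamma c^{(\gamma-1)/\gamma}$.

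Combining the two bounds yields $\sq{\mc{L}}_s(I_0) \approx_\gamma (a + (1-s)(b-a))^{(\gamma-1)/\gamma}$, which is exactly the claimed estimate after rewriting $c^{(\gamma-1)/\gamma} = (a+(1-s)(b-a))^{(\gamma-1)/\gamma}$.

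The main obstacle I anticipate is the upper-bound bookkeeping: carefully quantifying the phrase "one exceptional interval straddling $c$" and making sure the argument degrades gracefully when $b$ is only slightly larger than $c$ (so the right piece $(c,b)$ is tiny) versus when $b$ is much larger (so there is a genuine chance of packing long $E$-free intervals on the right). In the latter regime one must use that the gaps $|\mc{I}_k|$ for $k \ge M$ are all bounded by $\approx_\gamma M^{\gamma-1}$ uniformly, which is where \eqref{eqn:mean_value_thm} and \eqref{eqn:standard_length} do the real work; the estimate $|x^\gamma - y^\gamma| \le \gamma x^{\gamma-1}|x-y|$ with $x \ge M$ gives precisely the needed decay. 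Everything else is elementary manipulation of the monotonicity of $k \mapsto k^{\gamma-1}$.
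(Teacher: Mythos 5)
Your lower bound is correct and is exactly the paper's argument: since $I_0$ is $s$-good, the standard intervals $\mc{I}_k$ for $a^{1/\gamma}\le k<M:=c^{1/\gamma}$ tile $(a,c)$, have total length $(1-s)|I_0|$, and each has length $\gtrsim_\gamma M^{\gamma-1}=c^{(\gamma-1)/\gamma}$ by \eqref{eqn:standard_length}. (A small simplification you missed throughout: $c\in E$, so no $E$-free open interval can straddle $c$; the ``exceptional interval meeting $c$'' does not exist.)

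The upper bound, however, has a genuine gap at the step ``disjointness plus the length constraint $(1-s)|I_0|$ on $[a,c]$ leaves no room.'' It does leave room: $[a,c]$ has measure exactly $(1-s)|I_0|$, so a disjoint family of arbitrarily long intervals of total length $(1-s)|I_0|$ could a priori fit inside it, and no contradiction follows from measure counting alone. Your ``key geometric fact'' only bounds the gaps of $E$ to the \emph{right} of $c$ (indices $k\ge M$), but the decisive constraint lives just to the \emph{left} of $c$: if every $I_j$ lies in $(a,c)$ and $\sum_j|I_j|\ge(1-s)|I_0|=|(a,c)|$, the family covers $(a,c)$ up to measure zero, hence some $I_j$ meets $\mc{I}_{M-1}=((M-1)^\gamma,M^\gamma)$; being $E$-free, that $I_j$ is contained in $\mc{I}_{M-1}$, so $\delta\le|I_j|\le|\mc{I}_{M-1}|\approx_\gamma M^{\gamma-1}=c^{(\gamma-1)/\gamma}$. (Quantitatively: intervals of length $>\Lambda c^{(\gamma-1)/\gamma}$ can only sit inside $\mc{I}_k$ with $k\le c_\Lambda M$ for some $c_\Lambda<1$, and those have total length $c_\Lambda^\gamma c-a<c-a=(1-s)|I_0|$ --- \emph{that} is the strict shortfall your sketch needs.) The paper sidesteps this bookkeeping entirely by enlarging each $I_j$ to the standard interval containing it, so that without loss of generality the family is $\{\mc{I}_k\}_{k=n_0}^{\sq{n}_1-1}$; the total-length requirement forces $\sq{n}_1\ge n_1$, whence $\delta\le|\mc{I}_{\sq{n}_1-1}|\lesssim_\gamma n_1^{\gamma-1}$. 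Your outline is salvageable with the one missing observation above, but as written the contradiction does not close.
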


\begin{proof}
    We first show that $\sq{\mc{L}}_s(I_0) \gtrsim_\gamma (a + (1 - s)(b - a))^{(\gamma - 1) / \gamma}$. Set 
    \[\delta_0 = C_\gamma (a + (1 - s)(b - a))^{(\gamma - 1) / \gamma}\]
    where $C_\gamma$ will be fixed later. We find disjoint $E$-free intervals $\{I_k\}$ so that $|I_k| \geq \delta_0$ and $\sum|I_k| \geq (1 - s)|I_0|$. Since $I_0$ is an $s$-good interval, $n_0 = a^{1/\gamma}$ and $n_1 = (a + (1 - s)(b - a))^{1/\gamma}$ are natural numbers. We consider the standard intervals $\{\mc{I}_{k}\}_{k = n_0}^{n_1 - 1}$ (see \cref{fig:good_interval}(ii)). Then by \cref{eqn:standard_length}, 
    \[|\mc{I}_k| \gtrsim_\gamma (k + 1)^{\gamma - 1} \geq  n_1^{\gamma - 1}.\]
    We can now choose $C_\gamma$ small enough so that $|\mc{I}_k| \geq \delta_0$ for such $n_0 \leq k \leq n_1 - 1$. We have
    \[\sum_{k = n_0}^{n_1 - 1}|\mc{I}_k| = a + (1 - s)(b - a) - a = (1 - s)|I_0|.\]
    Therefore 
    \[\sq{\mc{L}}_s(I_0) \geq \delta_0 \gtrsim_\gamma (a + (1 - s)(b - a))^{(\gamma - 1) / \gamma}.\]
    We now show that 
    \[\sq{\mc{L}}_s(I_0) \lesssim (a + (1 - s)(b - a))^{(\gamma - 1) / \gamma}.\]
    Suppose that there are $E$-free disjoint intervals $\{I_k\}$ so that $|I_k| \geq \delta$ and $\sum |I_k| \geq (1 - s)|I_0|$. By enlargening and combining the $I_k$, we may assume that they are of the form 
    \[I_k = \mc{I}_k\]
    for $k = n_0, ..., \sq{n}_1 - 1$ for some $\sq{n}_1 \in \N$. From the same computation as above, 
    \[\sum_{k = n_0}^{n_1 - 1}|\mc{I}_k| = (1 - s)|I_0| \leq \sum_{k = n_0}^{\sq{n}_1 - 1}|\mc{I}_k|\]
    which implies that $\sq{n}_1 \geq n_1$. Similarly to above, we estimate,
    \[\delta \leq |\mc{I}_{\sq{n}_1 - 1}| \lesssim_\gamma \sq{n}_1^{\gamma - 1} \leq n_1^{\gamma - 1} = (a + (1 - s)(b - a))^{(\gamma - 1) / \gamma}.\]
    Taking the supremum over all $\delta$ gives 
    \[\sq{\mc{L}}_s(I_0) \lesssim_\gamma (a + (1 - s)(b - a))^{(\gamma - 1) / \gamma}\]
    which concludes the proof of the lemma.
\end{proof}

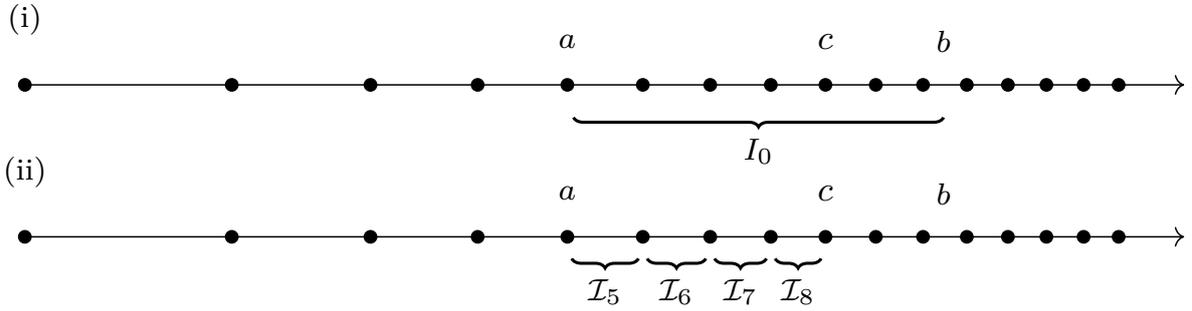
\begin{figure}[!ht]
    \centering
    \resizebox{1\textwidth}{!}{%
    \begin{tikzpicture}[scale=2]
    \tikzstyle{every node}=[font=\small]
    \draw[->] (5,8.5) to[short] (10.3,8.5);
    \node [font=\SMALL] at (5,8.8) {(i)};
    \node at (5,8.5) [circ] {};
    \node at (5.946,8.5) [circ] {};
    \node at (6.58,8.5) [circ] {};
    \node at (7.07,8.5) [circ] {};
    \node (pt1) at (7.48,8.5) [circ] {};
    \node [font=\SMALL] at (7.48,8.7) {$a$};
    \node at (7.825,8.5) [circ] {};
    \node at (8.133,8.5) [circ] {};
    \node at (8.41,8.5) [circ] {};
    \node at (8.66,8.5) [circ] {};
    \node [font=\Small] at (8.66,8.7) {$c$};
    \node at (8.89,8.5) [circ] {};
    \node at (9.106,8.5) [circ] {};
    \node (pt2) at (9.306,8.5) [circ] {};
    \node [font=\SMALL] at (9.2,8.7) {$b$};
    \node at (9.494,8.5) [circ] {};
    \node at (9.67,8.5) [circ] {};
    \node at (9.84,8.5) [circ] {};
    \node at (10,8.5) [circ] {};
    \draw [
    thick,
    decoration={
        brace,
        mirror,
        raise=0.3cm
    },
    decorate
    ] (pt1) -- (9.2,8.5)
    node [pos=0.5,anchor=north,yshift=-0.35cm,font=\SMALL] {$I_0$};

    \draw[->] (5,7.8) to[short] (10.3,7.8);
    \node [font=\SMALL] at (5,8.1) {(ii)};
    \node at (5,7.8) [circ] {};
    \node at (5.946,7.8) [circ] {};
    \node at (6.58,7.8) [circ] {};
    \node at (7.07,7.8) [circ] {};
    \node (pt1) at (7.48,7.8) [circ] {};
    \node [font=\SMALL] at (7.48,8) {$a$};
    \node at (7.825,7.8) [circ] {};
    \node at (8.133,7.8) [circ] {};
    \node at (8.41,7.8) [circ] {};
    \node at (8.66,7.8) [circ] {};
    \node [font=\Small] at (8.66,8) {$c$};
    \node at (8.89,7.8) [circ] {};
    \node at (9.106,7.8) [circ] {};
    \node (pt2) at (9.306,7.8) [circ] {};
    \node [font=\SMALL] at (9.2,8) {$b$};
    \node at (9.494,7.8) [circ] {};
    \node at (9.67,7.8) [circ] {};
    \node at (9.84,7.8) [circ] {};
    \node at (10,7.8) [circ] {};
    
    \draw [
    thick,
    decoration={
        brace,
        mirror,
        raise=0.2cm
    },
    decorate
    ] (7.5,7.8) -- (7.805,7.8)
    node [pos=0.5,anchor=north,yshift=-0.25cm,font=\SMALL] {$\mc{I}_{5}$};

    \draw [
    thick,
    decoration={
        brace,
        mirror,
        raise=0.2cm
    },
    decorate
    ] (7.845,7.8) -- (8.113,7.8)
    node [pos=0.5,anchor=north,yshift=-0.25cm,font=\SMALL] {$\mc{I}_{6}$};

    \draw [
    thick,
    decoration={
        brace,
        mirror,
        raise=0.2cm
    },
    decorate
    ] (8.153,7.8) -- (8.39,7.8)
    node [pos=0.5,anchor=north,yshift=-0.25cm,font=\SMALL] {$\mc{I}_{7}$};

    \draw [
    thick,
    decoration={
        brace,
        mirror,
        raise=0.2cm
    },
    decorate
    ] (8.43,7.8) -- (8.64,7.8)
    node [pos=0.5,anchor=north,yshift=-0.25cm,font=\SMALL] {$\mc{I}_{8}$};

    \end{tikzpicture}
    }%
    \caption{(i) A good interval $I_0 = (a, b)$ where $c = a + (1 - s)(b - a)$. (ii) The choice of intervals $\{\mc{I}_k\}$ so that $\sum |\mc{I}_k| \geq (1 - s)|I_0|$.}
    \label{fig:good_interval}
\end{figure}

The next lemma says that if we restrict ourselves to considering intervals that contain a uniformly bounded number of points of $E$, then the median porosity condition \cref{eqn:example_hypothesis} is trivial.

\begin{lemma}\label{lem:small_N}
    Fix a natural number $m \geq 1$. Then if $I_0 = (a, b)$ contains $m$ points of $E$ we have 
    \[\sq{\mc{L}}_1(I_0) \lesssim_{\gamma, m} \sq{\mc{L}}_{\frac{2}{3}}(I_0) \leq \sq{\mc{L}}_1(I_0).\]
\end{lemma}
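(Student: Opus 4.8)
The plan is to use the explicit structure of $E=E_\gamma$. Since $t\mapsto t^\gamma$ is strictly increasing, the points of $E$ lying in $I_0=(a,b)\subset(0,\infty)$ are consecutive: $k^\gamma<(k+1)^\gamma<\cdots<(k+m-1)^\gamma$ for some $k\geq 1$. Then $I_0\setminus E$ is the disjoint union of $m+1$ open intervals $J_0=(a,k^\gamma)$, $J_i=\mc{I}_{k+i-1}$ for $1\leq i\leq m-1$, and $J_m=((k+m-1)^\gamma,b)$, whose lengths sum to $|I_0|$. Because $(k-1)^\gamma\notin I_0$ we have $J_0\subset\mc{I}_{k-1}$, and likewise $J_m\subset\mc{I}_{k+m-1}$. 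Thus every gap lies inside one of the standard intervals $\mc{I}_{k-1},\dots,\mc{I}_{k+m-1}$, whose indices fill a window of length $m+1$; by \eqref{eqn:standard_length} and $k\geq 1$, all of these intervals have lengths comparable to one another with constants depending only on $\gamma$ and $m$ — write $\ell:=|\mc{I}_k|$, so $|\mc{I}_j|\approx_{\gamma,m}\ell$ for $k-1\leq j\leq k+m-1$.

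The bound $\sq{\mc{L}}_{2/3}(I_0)\leq\sq{\mc{L}}_1(I_0)$ is immediate: $\sq{\mc{L}}_s$ is nondecreasing in $s$, and any family admissible in the definition of $\sq{\mc{L}}_{2/3}(I_0)$ is nonempty and consists of $E$-free intervals of length at least the corresponding threshold. For the reverse inequality I would split on $P:=(k+m-1)^\gamma-k^\gamma=\sum_{i=1}^{m-1}|J_i|$, the total length of the interior gaps. If $P\geq|I_0|/3$, the family $\{\mc{I}_k,\dots,\mc{I}_{k+m-2}\}$ of disjoint $E$-free intervals has each member of length $\gtrsim_{\gamma,m}\ell$ and total length $P\geq(1-\frac23)|I_0|$, so $\sq{\mc{L}}_{2/3}(I_0)\gtrsim_{\gamma,m}\ell$; since $\sq{\mc{L}}_1(I_0)=\max_i|J_i|\leq\max_{k-1\leq j\leq k+m-1}|\mc{I}_j|\lesssim_{\gamma,m}\ell$, this yields $\sq{\mc{L}}_1(I_0)\lesssim_{\gamma,m}\sq{\mc{L}}_{2/3}(I_0)$. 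If instead $P<|I_0|/3$, then $|J_0|+|J_m|>\frac23|I_0|$, so one of $J_0,J_m$ is a single $E$-free interval of length $>|I_0|/3=(1-\frac23)|I_0|$; using it as the admissible family together with the trivial bound $\sq{\mc{L}}_1(I_0)\leq|I_0|$ gives $\sq{\mc{L}}_{2/3}(I_0)>|I_0|/3\geq\frac13\sq{\mc{L}}_1(I_0)$. (The case $m=1$ has no interior gaps, i.e.\ $P=0$, and so automatically falls into the second alternative.)

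The only place the geometry of $E$ genuinely enters — and hence the one point to verify carefully — is the comparability, with constants depending only on $\gamma$ and $m$, of the $m+1$ standard intervals indexed by a window of consecutive integers containing $k$; this is exactly what produces the implied constant in $\lesssim_{\gamma,m}$. One must also check the routine point that $\{\mc{I}_k,\dots,\mc{I}_{k+m-2}\}$ is a legitimate competitor in the definition of $\sq{\mc{L}}_{2/3}(I_0)$ (disjointness and $E$-freeness being clear from $\mc{I}_j=(j^\gamma,(j+1)^\gamma)$). Beyond that the argument is pure bookkeeping.
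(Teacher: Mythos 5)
Your proof is correct and follows essentially the same route as the paper's: decompose $I_0\setminus E$ into the standard intervals $\mc{I}_j$ and boundary gaps, use \eqref{eqn:standard_length} to get $(\gamma,m)$-comparability of the at most $m+1$ relevant standard intervals, and run a dichotomy on whether the gaps fill a third of $|I_0|$. Your organization of the dichotomy (interior gaps versus a single dominant boundary gap, which also absorbs the $m=1$ case) is marginally cleaner than the paper's split on $n_l^\gamma-a$ versus $n_r^\gamma-n_l^\gamma$, but the substance is identical.
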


\begin{proof}
    The second inequality follows immediately from the definitions so we only need to prove the first inequality. \par
    \textbf{Case 1:} First assume that $m = 1$. Then we can write the interval 
    \[I_0 = I_1 \cup \{x_0\} \cup I_2\]
    where $I_1$ and $I_2$ are $E$-free and $x_0 \in E$. Clearly $|I_j| \geq (1/2)|I_0|$ for $j=1$ or $j=2$. For this $I_j$, we have that $\sq{\mc{L}}_1(I_0) = |I_j|$. It follows that 
    \[\sq{\mc{L}}_1(I_0) = |I_j| \leq  \mc{\sq{L}}_{1/2}(I_0) \leq \mc{\sq{L}}_{\frac{2}{3}}(I_0)\]
    which concludes the proof of the lemma for $m = 1$. \par
    \textbf{Case 2:} Now assume that $m \geq 2$. Write $n_l = n_l(I_0)$ and $n_r = n_r(I_0)$ so that $m = n_r - n_l + 1$. Let $\delta_0 := n_r^\gamma - (n_r - 1)^\gamma$ be the length of the smallest standard interval fully contained in $I_0$. If $(n_l^\gamma - a) \leq (n_r^\gamma - n_l^\gamma) $, then we choose the $m - 1$ standard intervals $\mc{I}_{n_l}, ..., \mc{I}_{n_r - 1}$ contained in $I_0$. See \cref{fig:small_N}. We claim that 
    \begin{equation}\label{eqn:small_N_size}
        \sum_{j = n_l}^{n_r - 1}|\mc{I}_j| \geq \frac{1}{3}|I_0|.
    \end{equation}
    Indeed, we have 
    \begin{align*}
        \frac{|I_0|}{\sum_{j = n_l}^{n_r - 1}|\mc{I}_j|} & = \frac{b - a}{n_r^\gamma - n_l^\gamma} \\
         & = 1 + \frac{b - n_r^\gamma}{n_r^\gamma - n_l^\gamma} + \frac{n_l^\gamma - a}{n_r^\gamma - n_l^\gamma} \\
         & \leq 3
    \end{align*}
    which gives \cref{eqn:small_N_size}. It follows that $\sq{\mc{L}}_{\frac{2}{3}}(I_0) \geq \delta_0$. If $(n_l^\gamma - a) > (n_r^\gamma - n_l^\gamma)$, then a similar proof works but we add the interval $\sq{\mc{I}_{0}} = (a, n_l^\gamma)$ to the collection $\mc{I}_{n_l}, ..., \mc{I}_{n_{r}-1}$, obtaining \cref{eqn:small_N_size} with constant $\frac{1}{2}$.

    Finally, we observe that the lengths of the smallest and largest standard intervals contained in $I_0$ are proportional (with constants depending on $m$ and $\gamma$): If $n_l > 1$ then
    \begin{align}\label{RatioLargestToSmallestStandardIntervals}
        \frac{n_l^\gamma - (n_l - 1)^\gamma}{n_r^\gamma - (n_r - 1)^\gamma} & \leq \frac{\gamma (n_l - 1)^{\gamma - 1}}{\gamma n_r^{\gamma - 1}} \\ \nonumber
        & = \left(\frac{n_r}{n_l - 1}\right)^{1 - \gamma} \\ \nonumber
        & = \left(1 + \frac{m}{n_l - 1}\right)^{1 - \gamma} \\ \nonumber
        & \leq (1 + m)^{1 - \gamma}.
    \end{align}
    We conclude that
    \[\sq{\mc{L}}_{\frac{2}{3}}(I_0) \geq \delta_0 = n_r^\gamma - (n_r - 1)^\gamma \gtrsim_{\gamma, m} n_l^\gamma - (n_l - 1)^\gamma \geq \sq{\mc{L}}_1(I_0)\]
    as wanted. If $n_l = 1$, a similar reasoning gives the same result. \par
    Indeed, if $n_l = 1$ and $(n_l^\gamma - a) \leq ((n_{l}+1)^\gamma - n_l^\gamma)$ and $m=2$, then there is a unique standard interval contained in $I_0$, namely $\mc{I}_{1}$, and $\sq{\mc{L}}_{\frac{2}{3}}(I_0) = \sq{\mc{L}}_1(I_0) = |\mc{I}_{1}|$ and we are done. \par
    Next, if $n_l = 1$ and $(n_l^\gamma - a) \leq (n_{l+1}^\gamma - n_l^\gamma)$ and $m>2$, then the analogue of \eqref{RatioLargestToSmallestStandardIntervals} becomes
    \[
    \frac{2^\gamma - 1^\gamma}{n_r^\gamma - (n_r - 1)^\gamma} \leq
    \frac{\gamma}{\gamma (n_r)^{\gamma-1}}= (n_r)^{1-\gamma} \leq m^{1-\gamma}, 
    \]
    so we conclude as in the $n_l > 1$ case. \par
    Finally, if $n_l = 1$ and $(n_l^\gamma - a) > (n_{l+1}^\gamma - n_l^\gamma)$, we add the interval $\sq{\mc{I}_{0}} = (a, n_l^\gamma) = (a,1)$, and then the analogue of \eqref{RatioLargestToSmallestStandardIntervals} becomes
    \[
    \frac{n_l^\gamma - a}{n_r^\gamma - (n_r - 1)^\gamma} \leq 
    \frac{1}{\gamma (n_r)^{\gamma-1}} \leq \frac{m^{1-\gamma}}{\gamma},
     \]
    and again we conclude as in the $n_l > 1$ case.   
\end{proof}

\begin{figure}[!ht]
    \centering
    \resizebox{1\textwidth}{!}{%
    \begin{tikzpicture}[scale=2]
    \tikzstyle{every node}=[font=\small]
    \draw[->] (5,8.5) to[short] (10.3,8.5);
    \node at (5,8.5) [circ] {};
    \node at (5.946,8.5) [circ] {};
    \node [font=\SMALL] at (6.4,8.7) {$a$};
    \node at (6.58,8.5) [circ] {};
    \node at (7.07,8.5) [circ] {};
    \node at (7.48,8.5) [circ] {};
    \node at (7.825,8.5) [circ] {};
    \node at (8.133,8.5) [circ] {};
    \node [font=\SMALL] at (8.3,8.7) {$b$};
    \node at (8.41,8.5) [circ] {};
    \node at (8.66,8.5) [circ] {};
    \node at (8.89,8.5) [circ] {};
    \node at (9.106,8.5) [circ] {};
    \node at (9.494,8.5) [circ] {};
    \node at (9.306,8.5) [circ] {};
    \node at (9.67,8.5) [circ] {};
    \node at (9.84,8.5) [circ] {};
    \node at (10,8.5) [circ] {};

    \draw [
    thick,
    decoration={
        brace,
        mirror,
        raise=0.2cm
    },
    decorate
    ] (6.6,8.5) -- (7.05,8.5)
    node [pos=0.5,anchor=north,yshift=-0.25cm,font=\SMALL] {$I_3$};

    \draw [
    thick,
    decoration={
        brace,
        mirror,
        raise=0.2cm
    },
    decorate
    ] (7.09,8.5) -- (7.46,8.5)
    node [pos=0.5,anchor=north,yshift=-0.25cm,font=\SMALL] {$I_4$};

    \draw [
    thick,
    decoration={
        brace,
        mirror,
        raise=0.2cm
    },
    decorate
    ] (7.5,8.5) -- (7.805,8.5)
    node [pos=0.5,anchor=north,yshift=-0.25cm,font=\SMALL] {$I_5$};
    
    \draw [
    thick,
    decoration={
        brace,
        mirror,
        raise=0.2cm
    },
    decorate
    ] (7.825,8.5) -- (8.133,8.5)
    node [pos=0.5,anchor=north,yshift=-0.25cm,font=\SMALL] {$I_6$};
    \end{tikzpicture}
    }%
    \caption{The choice of intervals $\{\mc{I}_k\}$ when $I_0$ contains $m = 5$ points.}
    \label{fig:small_N}
\end{figure}
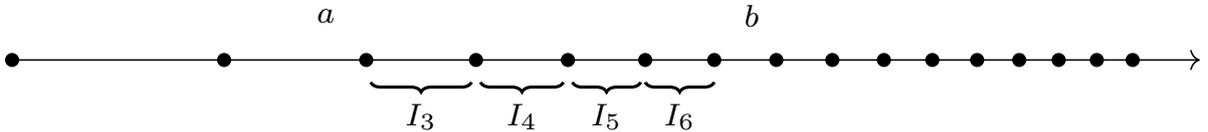

Finally, if $I_0$ contains a large number of points of $E$, we can approximate $I_0$ by good intervals and apply \cref{lem:example_good}.

\begin{lemma}\label{lem:large_N}
    Let $0 < s < 1$. Then for any $\epsilon > 0$, there is some large $N \geq 10$ so that if $I_0 = (a, b)$ contains at least $N$ points of $E$, then 
    \[\mc{\sq{L}}_{s + \epsilon}(I_0) \gtrsim_{\gamma, s}(a + (1 - s)(b - a))^{(\gamma - 1) / \gamma}\]
    and 
    \begin{equation}\label{UpperBoundForLTildeSMinusEpsilon}
    \mc{\sq{L}}_{s - \epsilon}(I_0) \lesssim_{\gamma, s} (a + (1 - s)(b - a))^{(\gamma - 1) / \gamma}.
    \end{equation}
    As a consequence, if $\epsilon < \min\{ s, 1-s\}$, then  \begin{equation}\label{eqn:LsBehavesAsWithGoodIntervalsIfNL>1AndContainManyPoints}
        \mc{\sq{L}}_{s}(I_0) \approx_{\gamma, s, \epsilon}(a + (1 - s)(b - a))^{(\gamma - 1) / \gamma}.
    \end{equation}
\end{lemma}

\begin{proof}
 
    Fix some $N$ that will be made large later and suppose that $I_0$ contains at least $N$ points of $E$. Write $n_l = n_l(I_0)$ and $n_r = n_r(I_0)$ so that $n_r - n_l + 1 \geq N$. We first claim that there is some function $v = v_{s,\gamma}(N)$ so that $v(N) \to 0$ as $N \to \infty$ and so that
    \begin{equation}\label{eqn:LengthOfMaximalFreeIntervalInI0Estimate}
    \sq{\mc{L}}_1(I_0) \leq v(N)|I_0|.
    \end{equation}
    If $n_l = 1$ then $a \leq 1$ so that 
    \[\sq{\mc{L}}_1(I_0) \leq 1 \leq \frac{|I_0|}{b - 1} \leq \frac{|I_0|}{N^\gamma - 1}.\]
    On the other hand, if $n_l > 1$,
    \begin{align*}
        \sq{\mc{L}}_1(I_0) & \leq n_l^\gamma - (n_l - 1)^\gamma \\ & \leq \gamma (n_l - 1)^{\gamma - 1} \\
        & \lesssim_\gamma \frac{(n_l - 1)^{\gamma - 1}}{n_r^\gamma - n_l^\gamma}|I_0| \\
        & \lesssim_\gamma \frac{(n_l - 1)^{\gamma - 1}}{(n_r - n_l)n_r^{\gamma - 1}}|I_0| \\
        & = \frac{1}{n_r - n_l}\left(\frac{n_r - n_l}{n_l - 1} + \frac{n_l}{n_l - 1}\right)^{1 - \gamma}|I_0| \\
        & \leq \left\{ \frac{1}{(n_r - n_l)^\gamma} + \frac{2^{1 - \gamma}}{n_r - n_l} \right\}|I_0| \\
        & \lesssim (N - 1)^{-\gamma}|I_0|.
    \end{align*}
    Therefore by setting $v(N) = C_{\gamma}\max((N^\gamma - 1)^{-1}, (N - 1)^{-\gamma})$, we have the claim. \par Now let $J_1$ be the largest $s$-good interval contained in $I_0$ and let $J_2$ be the smallest $(s + 1) / 2$-good interval containing $I_0$. See \cref{fig:large_N}. Write $J_j = (a_j, b_j)$ where $a_1 = n_l^\gamma$ and $a_2 = (n_l - 1)^\gamma$. Observe that 
    \[b_1 = \max\{b' \in [a_1, b] : a_1 + (1 - s)(b' - a_1) \in E\}.\]
    Therefore the interval $(a_1 + (1 - s)(b_1 - a_1), a_1 + (1 - s)(b - a_1))$ is $E$-free. This implies that its length is at most $\sq{\mc{L}}_1(I_0)$. Thus 
    \[(b - a_1) - (b_1 - a_1) \leq \frac{n_l^\gamma - (n_l - 1)^\gamma}{1 - s} \lesssim_s v(N)|I_0|.\]
    Therefore 
    \[|I_0| = b - a = b_1 - a_1 + (b - b_1) + (a_1 - a) \leq |J_1|(1 + C_{s}v(N)).\]
    In a similar way, we can show that 
    \[
    |J_2| \leq |I_0|(1 + C_{s, \gamma} v(N)).
    \]
    Indeed, since adjacent standard intervals have comparable length (with constants depending on $\gamma$), we can easily reduce to the case where the largest $E$-free interval in $J_2$ is contained in $I_0$, so that \cref{eqn:LengthOfMaximalFreeIntervalInI0Estimate} applies. The rest of the proof is completely analogous, with the small remark that, in the case that $n_l = 1$, just for the purposes of this estimate, we can consider $a_2 = 0 \in E$, so that $J_2$ has left endpoint in $E$, and then \cref{lem:example_good} is valid for $J_2$, with the same proof (with $n_0 = 0$). 

    Now let $\{I_j\}$ be a sequence of $E$-free sub-intervals of $J_1$ so that $|I_j| \geq \mc{\sq{L}}_s(J_1) / 2$ and
    \[\sum |I_j| \geq (1 - s)|J_1|.\]
    Then 
    \[\sum |I_j| \geq (1 - s)(1 + C_{s}v(N))^{-1}|I_0|.\]
    Therefore if we take $N$ large enough so that $(1 - s)(1 + C_{s}v(N))^{-1} \geq 1 - s - \epsilon$, we have by \cref{lem:example_good}
    \[\mc{\sq{L}}_{s+\epsilon}(I_0) \gtrsim \mc{\sq{L}}_{s }(J_1) \gtrsim_\gamma (n_l^\gamma + (1 - s)|J_1|)^{(\gamma - 1) / \gamma} 
    \gtrsim_{\gamma,s} (a + (1 - s)(b - a))^{(\gamma - 1) / \gamma}.
    \]
    The inequality \eqref{UpperBoundForLTildeSMinusEpsilon}, but replacing $(1 - s)$ with $\left( 1 - \frac{s+1}{2} \right)$ in the right-hand side, is obtained similarly. 

    Finally, note that if $0 < s', t' < 1$, then
    \[
    \left(\frac{a + (1 - s')(b - a)}{a + (1 - t')(b - a)}\right)^{(\gamma - 1) / \gamma} \approx_{\gamma, s', t'} 1, \]
    so both \eqref{UpperBoundForLTildeSMinusEpsilon} and \cref{eqn:LsBehavesAsWithGoodIntervalsIfNL>1AndContainManyPoints} follow readily, with constants independent of $a$ and $b$.

\end{proof}

\begin{figure}[!ht]
    \centering
    \resizebox{1\textwidth}{!}{%
    \begin{tikzpicture}[scale=2]
    \tikzstyle{every node}=[font=\small]
    \draw[->] (5,8.5) to[short] (10.3,8.5);
    \node at (5,8.5) [circ] {};
    \node at (5.946,8.5) [circ] {};
    \node [font=\SMALL] at (6.2,8.7) {$a$};
    \node at (6.58,8.5) [circ] {};
    \node at (7.07,8.5) [circ] {};
    \node at (7.48,8.5) [circ] {};
    \node at (7.825,8.5) [circ] {};
    \node at (8.133,8.5) [circ] {};
    \node [font=\SMALL] at (7.825,8.7) {$c_2$};
    \node at (8.41,8.5) [circ] {};
    \node [font=\SMALL] at (8.41,8.7) {$c_1$};
    \node at (8.66,8.5) [circ] {};
    \node at (8.89,8.5) [circ] {};
    \node at (9.106,8.5) [circ] {};
    \node [font=\SMALL] at (9.2,8.7) {$b$};
    \node at (9.306,8.5) [circ] {};
    \node at (9.494,8.5) [circ] {};
    \node at (9.67,8.5) [circ] {};
    \node at (9.84,8.5) [circ] {};
    \node at (10,8.5) [circ] {};

    \draw [
    thick,
    decoration={
        brace,
        mirror,
        raise=0.2cm
    },
    decorate
    ] (6.58,8.5) -- (8.8,8.5)
    node [pos=0.5,anchor=north,yshift=-0.25cm,font=\SMALL] {$J_1$};

    \draw [
    thick,
    decoration={
        brace,
        mirror,
        raise=0.2cm
    },
    decorate
    ] (5.946,8.2) -- (9.494,8.2)
    node [pos=0.5,anchor=north,yshift=-0.25cm,font=\SMALL] {$J_2$};

    \end{tikzpicture}
    }%
    \caption{The intervals $J_i = (a_i, b_i)$ with the points $a_i, c_i \in E$ where $c_1 = a_1 + (1 - s)|J_1|$ and $c_2 = a_2 + (1 - (\frac{s+1}{2}))|J_2|$.}
    \label{fig:large_N}
\end{figure}
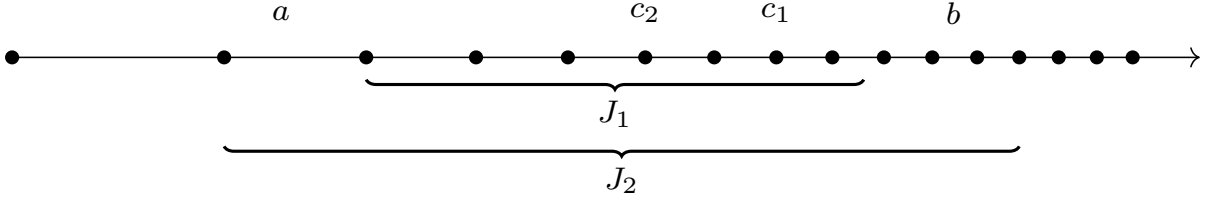

\begin{proof}[Proof of \cref{thm:example}]
    We start by showing that $E$ is not weakly porous. Suppose to the contrary that $E$ is weakly porous with constants $0 < s, \delta < 1$. Let $I_m = [0, m^\gamma)$. Then for all $m$, $\mc{\sq{L}}_1(I_m) = 1$ since the maximal $E$-free subinterval of $I_m$ is just $[0, 1)$. However, if $I \in \mc{S}_\delta^1(I_m)$ then we must have 
    \[
    |I| \geq \delta \, \mc{L}_1(I_m) \approx \delta.
    \]
    But $E$-free subintervals of length at least $C\delta$ can only fill a fixed  length of $[0, \infty)$ since by \cref{eqn:standard_length}, $|\mc{I}_k| \lesssim (k + 1)^{\gamma - 1} \leq \delta$ for large enough $k$. This contradicts the weakly porous assumption.
    \par
    We now prove that $E$ is median porous. It suffices to show that for some $0 < s < t < 1$, we have \cref{eqn:example_hypothesis}
    for all intervals $I$ that intersect $E$. Let $(s, t) = (3/4, 7/8)$ and choose $\epsilon = 1/16$ and $N$ large enough as in \cref{lem:large_N}. 
    \par 
    
    Let $I_0 = (a, b) \subset [0, \infty)$ be an arbitrary interval. First suppose that 
    $I_0$ contains $m \leq N$ points. Then by \cref{lem:small_N}, we have
    \[\sq{\mc{L}}_t(I_0) \leq \sq{\mc{L}}_{1}(I_0) \lesssim_{\gamma, N} \sq{\mc{L}}_s(I_0).\]
    
    Now suppose that $I_0$ contains at least $N$ points of $E$. Then by \cref{lem:large_N}, we have 
    \begin{equation}\label{eqn:LsAndLtComparableWhenGetEstimateSimilarToSGoodIntervals}
        \frac{\sq{\mc{L}}_t(I_0)}{\sq{\mc{L}}_s(I_0)} \approx_\gamma  
        \left(\frac{a + \frac{1}{8}(b - a)}{a + \frac{1}{4}(b - a)}\right)^{(\gamma - 1) / \gamma} 
        \approx_\gamma 1,
    \end{equation}
    independently of $a$ and $b$.

    Therefore \cref{eqn:example_hypothesis} holds for all intervals $I_0 \subset [0, \infty)$ and we conclude that $E$ is median porous.
\end{proof}

\section{Hardy-Sobolev Inequalities}\label{sec:hardy_sobolev}

We conclude this article with a discussion on Hardy-Sobolev inequalities. These are Sobolev embeddings on weighted spaces where the weights are $\dist(\cdot, E)$ raised to some power. Let $E \subset \R^n$ and consider the following Hardy-Sobolev inequality:
\begin{equation}\label{eqn:hardy-sobolev}
\left(\int_{\R^n}|f(x)|^{q}d^{\beta_2}(x) \, dx\right)^{1/q} \leq C\left(\int_{\R^n}|\nabla f(x)|^{p}d^{\beta_1}(x) \, dx\right)^{1/p}
\end{equation}
for all $f \in C_c^\infty(\R^n)$ where $d(x) = \dist(x, E)$, $1 \leq p \leq q \leq np / (n - p)$, $\beta_1 \in \R$ and 
\[\beta_2 = q/p(n - p + \beta_1) - n.\]
Under some dimensional assumptions on $E$,  \cref{eqn:hardy-sobolev} can hold. There has been strong interest to study these inequalities, in particular the questions of which domains support Hardy-Sobolev inequalities, and properties such as self-improvement of those inequalities (see e.g. \cite{KoskelaZhongHardyInequalityAndBoundarySize}, \cite{KoskelaLehrback_WeightedPointwiseHardy}, \cite{Lehrbäck2013dimension}, \cite{LehrbackWeightedHardyInequalitiesSizeOfBoundary}, \cite{LehrbackVahakangasInbetweenSobolevandHardy}, \cite{HurriSyrjanenVahakangas_FractionalSobolevPoincareAndFractionalHardyUnboundedJohnDomains}, \cite{ErikssonBiqueLehrbackVahakangas_SelfImprovementOfWeightedPointwiseInequalitiesOnOpenSets}, \cite{ErikssonBiqueKlineSelfImprovementOfFractionalHardyInequalities}, and the references therein), through various methods such as isoperimetric inequalities, Sobolev and Riesz capacity estimates, hyperbolic fillings, etc. 

Most recently, the validity of Hardy-Sobolev inequalities for domains $\R^n \setminus E$ has been pursued via Muckenhoupt $A_p$ properties of powers of the distance function (see e.g. \cite{dyda2017muckenhoupt}, again \cite{LehrbackVahakangasInbetweenSobolevandHardy}, and the excellent monograph \cite{KinnunenLehrbackVahakangasBook}). However, to date this approach has been limited in its development due to a lack of characterization of which sets $E$, and for which powers of the distance function $\dist(\cdot, E)$, such powers $w = \dist(\cdot, E)^{-\alpha}$ are Muckenhoupt $A_p$ weights. So the previous studies had to use side conditions such as porosity of the set $E$. E.g. in \cite{LehrbackVahakangasInbetweenSobolevandHardy}, building on \cite{HoriuchiImbeddingTheoremsForWeightedSobolevSpaces2}, the isoperimetric inequality was used in combination with porosity and $A_1$ properties of powers of the distance function; in \cite{dyda2017muckenhoupt} the $A_p$ method was combined with Riesz potentials and porosity. The recent interesting work \cite{ErikssonBiqueKlineSelfImprovementOfFractionalHardyInequalities} takes another approach, namely hyperbolic fillings, to prove self-improvement properties of fractional Hardy inequalities, which is a property clearly reminiscent of Muckenhoupt $A_p$ weights. 

In our case, while we do not remove completely side conditions, we substantially weaken them, at least in the critical case, from porosity to median porosity. Since \cref{thm:BMO_char_dist} shows that median porosity is equivalent to the existence of some power of the distance function being a Muckenhoupt $A_p$ weight, the median porosity side condition seems to be the limit of the $A_p$ technique.

More concretely, in \cite{dyda2017muckenhoupt}, the following theorem was proved:
\begin{theorem}\label{thm:hardy_sobolev_old}
    Let $1 < p \leq q \leq np / (n - p) < \infty$, (so, in particular, $p<n$), $\beta_1 \in \R$, and $\beta_2 = q/p(n - p + \beta_1) - n$. If 
   \[\underline{\textup{co\,dim}}_A(E) > \max \left(-\beta_2, \frac{\beta_1}{p - 1}\right)\]
   then \cref{eqn:hardy-sobolev} holds.
\end{theorem}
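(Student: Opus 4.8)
The plan is to deduce \eqref{eqn:hardy-sobolev} from a two-weight norm inequality for the Riesz potential $I_1 g(x):=\int_{\R^n}|x-y|^{1-n}g(y)\,dy$. For $f\in C_c^\infty(\R^n)$ the standard representation formula gives the pointwise bound $|f(x)|\le c_n\,I_1(|\nabla f|)(x)$, so it is enough to show that $I_1$ maps $L^p(d^{\beta_1}\,dx)$ into $L^q(d^{\beta_2}\,dx)$, i.e.
\[
\left(\int_{\R^n}(I_1g)^q\,d^{\beta_2}\,dx\right)^{1/q}\le C\left(\int_{\R^n}g^p\,d^{\beta_1}\,dx\right)^{1/p}\qquad(g\ge 0).
\]

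Since $1<p\le q<\infty$, such a bound follows from a bumped $A_{p,q}$-type condition of Sawyer--Wheeden type: it suffices to find $r>1$ with
\[
\sup_{Q}\; l(Q)^{\,1+n/q-n/p}\left(\dashint_Q d^{\beta_2}\,dx\right)^{1/q}\left(\dashint_Q d^{-r\beta_1/(p-1)}\,dx\right)^{1/(p'r)}<\infty .
\]
Note $1+n/q-n/p\ge 0$ because $q\le np/(n-p)$, with equality precisely at the critical exponent. To estimate the two averages, by enlarging $Q$ one reduces to two cases. For a cube $Q$ with $\dist(Q,E)\gtrsim l(Q)$, the weight $d$ is comparable on $Q$ to the constant $\dist(Q,E)$, the two averages become powers of $\dist(Q,E)$, and the estimate reduces --- using $\dist(Q,E)\gtrsim l(Q)$ together with the elementary inequality $\beta_2/q-\beta_1/p\le 0$ (itself a consequence of $q\le np/(n-p)$ and the relation defining $\beta_2$) --- to the case of cubes centered on $E$. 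For $Q$ centered at a point of $E$, I would split $Q$ into the dyadic annuli $(E_{2^{j+1}}\setminus E_{2^j})\cap Q$ with $2^j\lesssim l(Q)$, on which $d\approx 2^j$; the definition of $\underline{\textup{co\,dim}}_A(E)$ yields $|E_{2^{j+1}}\cap Q|\lesssim (2^j/l(Q))^{\alpha}|Q|$ for any fixed $\alpha<\underline{\textup{co\,dim}}_A(E)$, and summing the resulting geometric series --- which converges because $\alpha$ may be taken larger than $-\beta_2$, respectively larger than $r\beta_1/(p-1)$ for $r$ close to $1$, by the two strict hypotheses --- gives $\dashint_Q d^{\beta_2}\lesssim l(Q)^{\beta_2}$ and $\dashint_Q d^{-r\beta_1/(p-1)}\lesssim l(Q)^{-r\beta_1/(p-1)}$.

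Substituting these into the displayed supremum (and using $(p-1)p'=p$) bounds it by $\sup_Q l(Q)^{\,1+n/q-n/p+\beta_2/q-\beta_1/p}$, whose exponent vanishes identically precisely because of the balance relation $\beta_2=(q/p)(n-p+\beta_1)-n$. Hence the bumped condition holds, the two-weight inequality for $I_1$ follows, and with it \eqref{eqn:hardy-sobolev}.

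The main obstacle is the critical exponent $q=np/(n-p)$: there $1+n/q-n/p=0$, so the scaling in the Sawyer--Wheeden condition is borderline and the balance relation must be used with no room to spare; moreover the plain (unbumped) $A_{p,q}$ condition is \emph{not} sufficient for a strong-type bound for $I_1$ at the borderline, so one genuinely needs the $r>1$ bump, which is available exactly because the hypothesis $\underline{\textup{co\,dim}}_A(E)>\max(-\beta_2,\beta_1/(p-1))$ is a \emph{strict} inequality. An alternative route avoiding the bump is to prove the weak-type $(p,q)$ estimate for $I_1$ from the unbumped $A_{p,q}$ condition and then recover the strong Sobolev inequality by Maz'ya's truncation argument applied to the super-level sets of $f$. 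The remaining steps --- the representation formula, the far-cube reduction, and tracking the constants as $\alpha\uparrow\underline{\textup{co\,dim}}_A(E)$ --- are routine.
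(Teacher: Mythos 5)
The paper does not prove this theorem: it is stated as a quotation from \cite{dyda2017muckenhoupt} (``In \cite{dyda2017muckenhoupt}, the following theorem was proved''), and the authors never give an argument for it. So there is no ``paper's own proof'' to compare against; your submission is a free-standing proof sketch.

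That said, the sketch is sound and, as far as I can tell, faithful to the standard route used for such Hardy--Sobolev inequalities (and presumably close to what Dyda et al.\ actually do). The pointwise bound $|f|\lesssim I_1(|\nabla f|)$, the reduction to a bumped Sawyer--Wheeden/P\'erez-type condition for $I_1$, and the verification of the condition via the annular decomposition $(E_{2^{j+1}}\setminus E_{2^j})\cap Q$ are all correct steps. I checked the exponent bookkeeping: $1+n/q-n/p\ge 0$ is exactly $q\le np/(n-p)$; the identity $\beta_2/q-\beta_1/p=n/p-1-n/q$ shows $\beta_2/q\le\beta_1/p$ and makes the far-cube reduction legitimate; the geometric series $\sum_j 2^{j(\alpha+\beta_2)}$ converges once $\alpha>-\beta_2$, which the strict hypothesis guarantees (and similarly for the dual weight with $\alpha>r\beta_1/(p-1)$, $r$ near $1$); and the total exponent $1+n/q-n/p+\beta_2/q-\beta_1/p$ vanishes by the defining relation for $\beta_2$. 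One place to be a little more careful if you were to write this out in full: you invoke ``a bumped $A_{p,q}$-type condition of Sawyer--Wheeden type'' as sufficient for the strong two-weight bound for $I_1$, and then assert that the unbumped $A_{p,q}$ condition is not sufficient at the borderline. The sufficiency of the $L^r$-bump is a theorem of P\'erez rather than Sawyer--Wheeden, and the situation for $p<q$ strictly is more delicate than ``unbumped fails''; it is cleaner to simply cite P\'erez's bumped sufficiency result, or else take the alternative route you mention (weak type from the unbumped condition, then Maz'ya truncation), which is airtight and needs no bump at all. None of this is a gap in the argument, only a matter of citing the right two-weight theorem.
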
 
As an application of \cref{thm:BMO_char_dist}, we can improve the sufficient conditions for \cref{eqn:hardy-sobolev}. We have the following theorem

\begin{theorem}\label{cor:hardy_sobolev}
    With the constants $\beta_1$, $\beta_2$, $p$, and $q$, satisfying the conditions in \cref{thm:hardy_sobolev_old}, if $n \geq 2$ (which follows from $1 < p < n$), and
\[
    \underline{{\textup{co\,dim}}}_A(E) > \max\left(-\beta_2, \frac{\beta_1}{p - 1}\right)
    \quad \text{for} \quad q < \frac{np}{n - p} 
\]
and
\begin{equation}\label{DimensionalHypothesesForSufficientConditonsInCriticalCaseInHardySobolev}
\textup{Mu}_\infty(E) > \max \left\{ -\beta_2, \frac{\beta_1}{p - 1} \right\} \geq \min \left\{ -\beta_2, \frac{\beta_1}{p - 1} \right\} > - \sup_{s > 1}(s - 1)\textup{Mu}_{s'}(E) 
    \quad \text{for} \quad q = \frac{np}{n - p}
\end{equation}
then there is a constant $C$ so that 
\[\left(\int_{\R^n}|f(x)|^{q}d^{\beta_2}(x) \, dx\right)^{1/q} \leq C\left(\int_{\R^n}|\nabla f(x)|^{p}d^{\beta_1}(x) \, dx\right)^{1/p}\]
for all $f \in C_c^\infty(\R^n)$.
\end{theorem}

\begin{remark}\label{WeActuallyImproveSufficientConditionsForHardySobolev}
Theorem \ref{cor:hardy_sobolev} is indeed an improvement of \cref{thm:hardy_sobolev_old}. To be sure, when $q = \frac{np}{n - p}$, then $\beta_2 = \frac{n\beta_1}{n - p}$, which implies that $\beta_1$ and $\beta_2$ have the same sign. Thus, the hypotheses of \cref{thm:hardy_sobolev_old} imply in particular that $\underline{{\textup{co\,dim}}}_A(E) > 0$, which in turn holds if and only if $E \subset \R^n$ is porous (see e.g. \cite[Section 5]{Luukkainen1998Dimension}). 

For a porous set $E$, we have that $\textup{Mu}_1(E) = \underline{{\textup{co\,dim}}}_A(E)$, as observed in \cite[p.17, lines 1-2]{anderson2022weakly}, and also that $E$ is weakly porous by \cref{thm:muck_weakly_porous}. Then, by Corollary \ref{PropertiesOfMu_pFunction}, $\text{Mu}_1(E) = \text{Mu}_p(E) = \text{Mu}_\infty(E) >0$ for all $1 < p < \infty$. Consequently, $\sup_{s > 1}(s - 1)\textup{Mu}_{s'}(E) = +\infty$, and $\text{Mu}_\infty(E) = \underline{{\textup{co dim}}}_A(E)$, so the hypotheses in 
\eqref{DimensionalHypothesesForSufficientConditonsInCriticalCaseInHardySobolev} are satisfied.

But of course \eqref{DimensionalHypothesesForSufficientConditonsInCriticalCaseInHardySobolev} allows for sets that are not porous, and not even weakly porous.
\end{remark}

\begin{proof}
    The first part of the theorem is \cref{thm:hardy_sobolev_old}. For the second assertion, we use the classical result of Muckenhoupt and Wheeden \cite{1974Muckenhoupt}  which says that if a weight $w$ satisfies 
    \begin{equation}\label{eqn:ApTypeConditionInMuckenhouptWheeden}
         \left(\dashint_{Q} w \, dx\right)\left(\dashint_{Q} w^{-p' / q} \, dx\right)^{q/p'} \leq C
    \end{equation}
    where $q = np / (n - p)$ (i.e. $w \in A_{1 + (q/p')}$) and $1 < p < n$, then 
    then there is a constant $C$ so that 
    \begin{equation}\label{eqn:WeightedHardySobolevInequalityCriticalCase}
    \left(\int_{\R^n}|f(x)|^{q}w(x) \, dx\right)^{1/q} \leq C\left(\int_{\R^n}|\nabla f(x)|^{p}w(x)^{p/q} \, dx\right)^{1/p}
    \end{equation}
    for all $f \in C_c^\infty(\R^n)$. \par
    Set $w = d^{\beta_2}$. Note that when $q = np / (n - p)$,  
    \[\beta_2 = \frac{n\beta_1}{n - p}.\]
    We use the fact that $w \in A_{r}$ if and only if $w \in A_\infty$ and $w^{-1/(r - 1)} \in A_\infty$ (see e.g. \cite[Chapter IV, Thm. 2.17]{garcia1985weighted}). In our case, $r = 1 + (q / p')$ so that 
    \[-\frac{1}{r - 1} = - \frac{p'}{q}.\]
    By \cref{thm:Ap_range}, $w \in A_\infty$ if and only if 
    \[
    \text{Mu}_\infty(E) > - \beta_2 > -\sup_{s > 1}(s - 1)\textup{Mu}_{s'}(E) ,
    \] 
    and $w^{-p' / q} \in A_\infty$ if and only if 
    \[
    \text{Mu}_\infty(E) > \frac{\beta_2 p'}{q} = \frac{\beta_1}{p - 1} > -\sup_{s > 1}(s - 1)\textup{Mu}_{s'}(E).
    \]  
    Both of these conditions holding simultaneously are equivalent to the hypothesis of the theorem. Therefore $w \in A_{1 + (q/p')}$ so we are done.
\end{proof}

The main idea in the proof of Theorem \ref{cor:hardy_sobolev} is to guarantee that the appropriate  $s$-Riesz potential (namely $s=1$ in our case)
\begin{equation}\label{RieszPotential}
    \mathcal{I}_s(f)(x) = c_n \int_{\R^n} \frac{f(y)}{|x-y|^{n-s}} dy
\end{equation}
satisfies
\begin{equation}\label{RieszPotentialBound}
   \| \mathcal{I}_s(f)w^{\frac{1}{q}} \|_{L^q} \lesssim \| f w^{\frac{1}{q}}  \|_{L^p} ,
\end{equation}
for $0 < s < n$, $1 < p < \frac{n}{s}$ and $\frac{1}{q} = \frac{1}{p} - \frac{s}{n}$. Indeed, \cite{1974Muckenhoupt} proves that \eqref{RieszPotentialBound} is precisely equivalent to \cref{eqn:ApTypeConditionInMuckenhouptWheeden}. From \eqref{RieszPotentialBound}, \cref{eqn:WeightedHardySobolevInequalityCriticalCase} follows easily. 

This strategy is precisely the one followed in \cite{dyda2017muckenhoupt}, both for first order Hardy-Sobolev inequalities, and fractional Hardy-Sobolev inequalities. If we get closer to the implementation of the strategy in \cite{dyda2017muckenhoupt} than our proof of Theorem \ref{cor:hardy_sobolev}, we get Theorem \ref{FractionalAndFirstOrderHardySobolevInequalitiesViaRieszPotentials} below. Part (a) in Theorem \ref{FractionalAndFirstOrderHardySobolevInequalitiesViaRieszPotentials} is completely equivalent to Theorem \ref{cor:hardy_sobolev} (see \cref{rem:ComparisonOfHypothesesOfSufficientConditionsForHardySobolevInCriticalCase}), but it also yields with no extra effort sufficient conditions for fractional Hardy-Sobolev inequalities (part (b)), that improve the results in \cite{dyda2017muckenhoupt} for the critical exponents. 

As mentioned in the introduction, we focus on the case $\beta_1 < 0$ because it is the only case where conditions strictly weaker than porosity seem to be able to contribute to the characterization of domains supporting a Hardy-Sobolev inequality. 

Indeed, on the one hand, the first order ($s=1$ in Theorem \ref{FractionalAndFirstOrderHardySobolevInequalitiesViaRieszPotentials}) weighted Hardy-Sobolev inequalities for $\beta_1 = 0$ (in which case $-p \leq \beta_2 \leq 0$, and thus $\dist(x, E)^{\beta_1}$ does not blow up locally and $\dist(x, E)^{\beta_2}$ blows up locally in a controlled manner in \eqref{eqn:hardy-sobolev}), the sets $E \subset \R^n$ for which \cref{eqn:HardySobolevIneqIntroduction} is true have already been completely characterized in \cite[Theorem 1.1]{LehrbackVahakangasInbetweenSobolevandHardy} precisely by the condition that $\textup{\underline{co\,dim}}_{A}(E) > n - \frac{q}{p}(n - p) = - \beta_2 \geq 0$. It is well known that $\textup{\underline{co\,dim}}_{A}(E) > 0$ if and only if $E$ is porous (see e.g. Theorem 10.25 in the excellent monograph \cite{KinnunenLehrbackVahakangasBook}, which also covers Hardy-Sobolev inequalities and distance weights in Chapter 10, or \cite[Section 5]{Luukkainen1998Dimension}).

On the other hand, for $\beta_1 > 0$ (in which case $\frac{q}{p} \beta_1 - p \leq \beta_2 \leq  \frac{q}{p} \beta_1$, and thus $\dist(x, E)^{\beta_1}$ does not blow up locally, whereas $\dist(x, E)^{\beta_2}$ either does not blow up locally, if $\beta_2 \geq 0$, or does blow up locally but in a controlled manner in \eqref{eqn:hardy-sobolev}), the necessary condition for the Hardy-Sobolev inequality given in \cite[Theorem 6.1]{LehrbackVahakangasInbetweenSobolevandHardy} is $\textup{\underline{co\,dim}}_{A}(E) > n - \frac{q}{p}(n - p + \beta_1) = - \beta_2$, which implies porosity of the set $E$ (when $- \beta_2 > 0$) and, moreover, it almost matches the sufficient conditions for the Hardy-Sobolev inequality given in \cite[Theorem 6.1]{dyda2017muckenhoupt}, namely that $\textup{\underline{codim}}_{A}(E) > \max \left\{ n - \frac{q}{p}(n - p + \beta_1), \frac{\beta_1}{p-1} \right\}$ (see also \cite[Remark 6.2]{dyda2017muckenhoupt}). So it seems very likely that for the case $\beta_1 > 0$, the characterization of weighted Hardy-Sobolev inequalities will also be in terms of porosity (which is equivalent to $\textup{\underline{co\,dim}}_{A}(E) > 0$).

However, the case $\beta_1 < 0$ is much more technical. Indeed, in this case, $\beta_2 \leq \frac{q}{p} \beta_1 $ (with equality precisely in the critical case $q = p^* = \frac{np}{n-p}$), so both $\dist(x, E)^{\beta_1}$ and $\dist(x, E)^{\beta_2}$ blow up locally in \eqref{eqn:hardy-sobolev}, and in the case of $\dist(x, E)^{\beta_2}$, in a pretty nasty manner. 

Now note that both all the necessary conditions and all the sufficient conditions currently known in this $\beta_1 < 0$ case, assume porosity of the set $E$ (see e.g. \cite{dyda2017muckenhoupt} and \cite{LehrbackVahakangasInbetweenSobolevandHardy} and the references therein). Our Theorems in this section are, to our knowledge, \emph{the first instance in the literature} characterizing weighted Hardy-Sobolev inequalities to ``break the porosity barrier", at least in the critical case $q = p^* = np / (n - p)$. 

So, getting back to implementing the Riesz potential strategy, we first get the corresponding bound for Riesz potentials, which improves on \cite[Theorem 4.1]{dyda2017muckenhoupt} in the critical case $q = \frac{np}{n - sp}$, since instead of porosity, we only assume median porosity. 

\begin{theorem}\label{WeightedInequalityForRieszPotentialsForMedianPorousSets}
Let $s>0$, and let $\emptyset \neq E \subset \R^n$ be a closed set. Let $1 < p \leq q = \frac{np}{n - sp} < \infty$ and $\beta_1<0$ be such that
\begin{equation}\label{MuckenhouptExponentHypothesesForRieszPotentialTheorem1}
    \textup{Mu}_\infty(E) > n - \frac{q}{p}(n - sp + \beta_1) =: - \beta_{2,s}
\end{equation}
and
\begin{equation}\label{MuckenhouptExponentHypothesesForRieszPotentialTheorem2}
    \textup{Mu}_p(E) > - \beta_1.
\end{equation}
Then there is a constant $C>0$ such that the inequality
\begin{equation}\label{RieszPotentialWeightedBound}
    \left(\int_{\R^n}\mathcal{I}_s(f)(x)^{q}d^{\beta_{2,s}}(x) \, dx\right)^{1/q} \leq C\left(\int_{\R^n}f(x)^{p}d^{\beta_1}(x) \, dx\right)^{1/p}
\end{equation}
holds for all measurable functions $f \geq 0$.
\end{theorem}

\begin{proof}
    As in the case of \cite[Theorem 4.1]{dyda2017muckenhoupt}, the proof consists basically of checking, for the particular case we need, the hypotheses of \cite[Theorem 4.2]{dyda2017muckenhoupt} (which in turn is heavily based on \cite{PerezWheedenPotentialOperatorsMaximalFunctionsAndGeneralizationsOfAInfty}). 
    
    Indeed, consider the weights $w(x) = d^{\beta_{2,s}}(x)$, $v(x) = d^{\beta_1}(x)$, and $h(x) = d^{\frac{-\beta_1}{p-1}}(x)$. We first check that all of them are $A_\infty$ weights. Note that $q \leq \frac{np}{n - sp}$ implies that $- \beta_{2,s} = n - \frac{q}{p}(n - sp + \beta_1) \geq - \frac{q}{p} \beta_1 > - \beta_1 > 0$. Consequently, by \eqref{MuckenhouptExponentHypothesesForRieszPotentialTheorem1} and \cref{thm:Ap_range_2}, both $w$ and $v$ are in $A_\infty$. Regarding $h(x)$, note that by \eqref{MuckenhouptExponentHypothesesForRieszPotentialTheorem2}, $  \textup{Mu}_p(E) > - \beta_1 > 0$, which by \cref{thm:Ap_range_2} is equivalent to $d^{\beta_1} \in A_p$, which in turn, by the standard duality ($u \in A_p \Longleftrightarrow u^{\frac{-1}{p-1}} \in A_{p'}$), is equivalent to $h(x) = d^{\frac{-\beta_1}{p-1}} \in A_{p'} \subset A_\infty$.

    Next we check that there exists a constant $K>0$ such that the inequality \begin{equation}\label{ConditionForRieszPotentialToMapLpToLqInDydaEtAl}
        w(B)^{\frac{1}{q}} h(B)^{\frac{p-1}{p}} \leq K \ \textup{rad}(B)^{-s} \ |B| 
    \end{equation}
    holds for all balls $B \subset \R^n$. Since all weights involved are doubling (being $A_\infty$ weights), it suffices to check the analogue of \eqref{ConditionForRieszPotentialToMapLpToLqInDydaEtAl} for cubes.
    
    On the one hand, by \eqref{MuckenhouptExponentHypothesesForRieszPotentialTheorem1} and as in \eqref{EstimatingApConditionFactor1}, we get
    \begin{equation}\label{WQEstimate}
     w(Q)^{\frac{p}{q}} = \left( \int_Q d^{\beta_{2,s}}(y) \ dy   \right)^{\frac{p}{q}} \lesssim \left( |Q| \ \mc{L}_{s'}(Q)^{\frac{q}{p}(n - sp + \beta_1) - n} \right)^{\frac{p}{q}},
    \end{equation}
    and on the other hand, by \eqref{MuckenhouptExponentHypothesesForRieszPotentialTheorem2} and as in \eqref{EstimatingApConditionFactor2}, we get
    \begin{equation}\label{HQEstimate}
     h(Q)^{p-1} = \left( \int_Q d^{\frac{- \beta_1}{p-1}}(y) \ dy   \right)^{p-1} \lesssim \left( |Q| \ \mc{L}_{s'}(Q)^{\frac{- \beta_1}{p-1}} \right)^{p-1} .
    \end{equation}
    Thus,
    \begin{equation}\label{WQTimesHQEstimate}
    w(Q)^{\frac{p}{q}} h(Q)^{p-1} \lesssim |Q|^{\frac{p}{q} + p-1 } \mc{L}_{s'}(Q)^{- \beta_1} \mc{L}_{s'}(Q)^{ n - sp + \beta_1 - \frac{p}{q}n} = |Q|^p \ l(Q)^{-sp} \ \left[ \frac{\mc{L}_{s'}(Q)}{l(Q)} \right]^{n \left( 1 - \frac{p}{q} \right) - sp}.
    \end{equation}

    Now $\left[ \frac{\mc{L}_{s'}(Q)}{l(Q)} \right] \leq 1$, so we get \eqref{ConditionForRieszPotentialToMapLpToLqInDydaEtAl} precisely when $n \left( 1 - \frac{p}{q} \right) - sp \geq 0$, which happens precisely when $q \geq \frac{np}{n - sp}$.
\end{proof}

\begin{remark}\label{rem:WeGetComparabilityInTestingConditionForRieszPotential} 
Note that in \eqref{WQEstimate} and in \eqref{HQEstimate}, we get comparability of all terms (i.e. ``$\approx$" instead of just ``$\lesssim$") by \cref{prop:median_dist}, and thus also in \eqref{WQTimesHQEstimate}. This has two consequences.

The first consequence is that, if $E$ is porous, our proof reduces essentially to that of \cite[Theorem 4.1]{dyda2017muckenhoupt}. Indeed, by \cref{rem:AnyValueOfSWorksForDefinitionOfMedianPorous}, $\mc{L}_{s'}(Q) \approx \mc{L}_{1}(Q) \approx l(Q)$, in which case 
\eqref{ConditionForRieszPotentialToMapLpToLqInDydaEtAl} holds irrespective of the sign of $n \left( 1 - \frac{p}{q} \right) - sp$.

It is more convenient to state the second consequence further below (see \cref{rem:WeReachedTheLimitOfTheRieszPotentialMethod}). 
\end{remark}

Once we have the $s$-Riesz potential bound \eqref{RieszPotentialWeightedBound}, the next step is pointwise bounds of a function $f$ by the $s$-fractional integral of the appropriate version of the $s$-derivative of $f$. We recall these known bounds for the convenience of the reader, adapted to our setting. 

\begin{lemma}\label{RecoverFFromItsSDerivative}\cite[Lemma 5.4]{dyda2017muckenhoupt} Let $0 < s < 1$. Then if $1 \leq t < \infty$, there exists $C>0$ such that for all compactly supported Lipschitz function $f$ and all $x \in \R^n$, we have that
\[
|f(x)| \leq C \mathcal{I}_s(g)(x).
\]
Where for all $y \in \R^n$,
\[
g(y) = \left( \int_{\R^n} \frac{|f(y) - f(z)|^t}{|y - z|^{n + st}} \ dz \right)^{\frac{1}{t}}.
\]
\end{lemma}

The corresponding result for $s=1$, which we include next, is very well-known. The reader will find a proof e.g. in \cite[Section 6]{dyda2017muckenhoupt} in a much more general context than ours.

\begin{lemma}\label{RecoverFFromItsFirstDerivative}
    There exists $C>0$ such that for any compactly supported Lipschitz function $f$ defined on $\R^n$ and all $x \in \R^n$,
    \[
    |f(x)| \leq C \mathcal{I}_1(|\nabla f|)(x).
    \]    
\end{lemma}

Combining Theorem \ref{RieszPotentialWeightedBound}, Lemma \ref{RecoverFFromItsSDerivative}, and Lemma \ref{RecoverFFromItsFirstDerivative}, we readily get the following Theorem. We will later see (\cref{rem:ComparisonOfHypothesesOfSufficientConditionsForHardySobolevInCriticalCase}) that Theorem \ref{FractionalAndFirstOrderHardySobolevInequalitiesViaRieszPotentials}(a) is \emph{exactly} the same as Theorem \ref{cor:hardy_sobolev} (in the critical case $q = \frac{np}{n - p}$). However, there is some use to restating it with (only apparently different) hypotheses: one version might be easier to check than the other, and the proofs are slightly different.

\begin{theorem}\label{FractionalAndFirstOrderHardySobolevInequalitiesViaRieszPotentials}

Let $\emptyset \neq E \subset \R^n$ be a closed set. 

\begin{itemize}

    \item [(a)] Let $1 < p \leq q = \frac{np}{n - p} < \infty$ and $\beta_1<0$ be such that
\begin{equation}\label{MuckenhouptExponentHypothesesForRieszPotentialTheorem1v2}
    \textup{Mu}_\infty(E) > n - \frac{q}{p}(n - p + \beta_1) =: - \beta_{2}
\end{equation}
and
\begin{equation}\label{MuckenhouptExponentHypothesesForRieszPotentialTheorem2v2}
    \textup{Mu}_p(E) > - \beta_1.
\end{equation}
Then there is a constant $C>0$ such that the first order Hardy-Sobolev inequality
\[\left(\int_{\R^n}|f(x)|^{q}d^{\beta_2}(x) \, dx\right)^{1/q} \leq C\left(\int_{\R^n}|\nabla f(x)|^{p}d^{\beta_1}(x) \, dx\right)^{1/p}\]
holds for all $f \in C_c^\infty(\R^n)$.

    \item [(b)] Let $0 < s < 1$, and let $1 < p \leq q = \frac{np}{n - sp} < \infty$ and $\beta_1<0$ be such that
\begin{equation}\label{MuckenhouptExponentHypothesesForRieszPotentialTheorem1v3}
    \textup{Mu}_\infty(E) > n - \frac{q}{p}(n - sp + \beta_1) =: - \beta_{2,s}
\end{equation}
and
\begin{equation}\label{MuckenhouptExponentHypothesesForRieszPotentialTheorem2v3}
    \textup{Mu}_p(E) > - \beta_1.
\end{equation}
Then if $1 \leq t < \infty$, there is a constant $C>0$ such that the fractional Hardy-Sobolev inequality
    \[\left(\int_{\R^n}|f(x)|^{q}d^{\beta_{2,s}}(x) \, dx\right)^{1/q} \leq C\left(\int_{\R^n}
    \left( \int_{\R^n} \frac{|f(y) - f(z)|^t}{|y - z|^{n + st}} \ dz \right)^{\frac{p}{t}}
    d^{\beta_1}(y) \, dy\right)^{1/p}\]
holds for all $f \in C_c^\infty(\R^n)$.
    
\end{itemize}

\end{theorem}

\begin{remark}\label{rem:ComparisonOfHypothesesOfSufficientConditionsForHardySobolevInCriticalCase}
In the critical case $q = \frac{np}{n - sp}$ (both for $0<s<1$ and for $s=1$), when $\beta_1 < 0$, which is the situation Theorem \ref{FractionalAndFirstOrderHardySobolevInequalitiesViaRieszPotentials} deals with, it improves on the corresponding Theorems 5.3 and 6.1 in \cite{dyda2017muckenhoupt}. Indeed, in that case, since $n - \frac{q}{p}(n -sp + \beta_1) = - \frac{q}{p}\beta_1 >0$, the dimensional hypothesis in Theorems 5.3 and 6.1 in \cite{dyda2017muckenhoupt} reduce to $\textup{\underline{co\,dim}}_{A}(E) > - \frac{q}{p}\beta_1 >0$, so in particular $E$ is porous, and as in Remark \ref{WeActuallyImproveSufficientConditionsForHardySobolev}, $ \textup{\underline{co\,dim}}_{A}(E) = \textup{Mu}_\infty(E) = \textup{Mu}_p(E) > -\beta_{2,s} = - \frac{q}{p}\beta_1$ (note $\beta_{2,s} = \beta_2$), so the dimensional hypotheses of Theorem \ref{FractionalAndFirstOrderHardySobolevInequalitiesViaRieszPotentials} are satisfied.

The comparison between the hypotheses of Theorem \ref{FractionalAndFirstOrderHardySobolevInequalitiesViaRieszPotentials}(a) and Theorem \ref{cor:hardy_sobolev} (in the critical case $q = \frac{np}{n - p}$) is more subtle, but reveals that they are equivalent hypotheses.

Indeed, first assume the hypotheses of Theorem \ref{FractionalAndFirstOrderHardySobolevInequalitiesViaRieszPotentials}(a). Then $\textup{Mu}_p(E) > - \beta_1 > 0$, which by \cref{thm:Ap_range_2} is equivalent to $d^{\beta_1} \in A_p$, which in turn, by the standard duality ($u \in A_p \Longleftrightarrow u^{\frac{-1}{p-1}} \in A_{p'}$), is equivalent to $d^{\frac{-\beta_1}{p-1}} \in A_{p'}$, which again by \cref{thm:Ap_range_2} implies $\frac{\beta_1}{p-1} > -\sup_{s > 1}(s - 1)\textup{Mu}_{s'}(E)$. So the hypotheses of Theorem \ref{cor:hardy_sobolev} are satisfied.

Now assume the hypotheses of Theorem \ref{cor:hardy_sobolev} (for $q = \frac{np}{n - p}$). From \eqref{DimensionalHypothesesForSufficientConditonsInCriticalCaseInHardySobolev}, and \cref{thm:Ap_range_2} we get that $d^{\frac{- \beta_1}{p-1}} \in A_\infty$. From the fact that $- \beta_2 = n - \frac{q}{p}(n -p + \beta_1) = - \frac{q}{p}\beta_1 >0$, and \cref{thm:Ap_range_2} we get that $d^{\frac{q}{p}\beta_1} \in A_\infty$. In turn, we claim that this implies that $d^{\beta_1} \in A_\infty$. Indeed, note that by H\"older's inequality, and taking $f = \log d$,
\[
\left(\dashint_{Q}d^{\beta_1}\, dx\right) \leq
\left(\dashint_{Q}d^{(q/p)\beta_1}\, dx\right)^{p/q}  
\lesssim \exp(\ang{f}_Q)^{\beta_1},
\]
where the last inequality follows from the fact (see e.g. \cite[p.218, 6.3]{SteinFatBook}) that $w \in A_\infty$ if and only if
    \begin{equation}\label{ExpLogCharacterizationOfAInfty3}
        \dashint_{Q} w \, dx \lesssim \exp\left(\dashint_Q \log w \, dx\right).
    \end{equation}  
But then we get that $d^{\beta_1} \in A_\infty$ (again by \eqref{ExpLogCharacterizationOfAInfty3}). 

But now, both $d^{\frac{- \beta_1}{p-1}}$ and $d^{\beta_1}$ belong to $A_\infty$, which is equivalent to $d^{\beta_1} \in A_p$ (\cite[Chapter IV, Thm. 2.17]{garcia1985weighted}), which again by \cref{thm:Ap_range_2} implies that $\textup{Mu}_p(E) > - \beta_1$, i.e. \eqref{MuckenhouptExponentHypothesesForRieszPotentialTheorem2v2}. So the hypotheses of Theorem \ref{FractionalAndFirstOrderHardySobolevInequalitiesViaRieszPotentials}(a) are satisfied. 

To sum up, the hypotheses of Theorem \ref{cor:hardy_sobolev} (for $q = \frac{np}{n - p}$) are completely equivalent to the hypotheses of Theorem \ref{FractionalAndFirstOrderHardySobolevInequalitiesViaRieszPotentials}(a).
\end{remark}


\begin{remark}\label{rem:WeReachedTheLimitOfTheRieszPotentialMethod}
The proofs of Theorem \ref{cor:hardy_sobolev} and Theorem \ref{FractionalAndFirstOrderHardySobolevInequalitiesViaRieszPotentials} show that we have essentially ``reached the limit" of the Riesz potential method (as it appears e.g. in \cite{PerezWheedenPotentialOperatorsMaximalFunctionsAndGeneralizationsOfAInfty}, \cite{1974Muckenhoupt}, \cite{dyda2017muckenhoupt}). Indeed, this method (say for $s = 1$ for convenience of notation) basically proves (and glues together) each one of the inequalities
\begin{equation}\label{eqn:FunctionDominatedByRieszPotentialOfGradientV2}
    |f(x)| \leq C \mathcal{I}_1(|\nabla f|)(x) ,
\end{equation}
and 
\begin{equation}\label{eqn:ChainOfInequalitiesForRieszPotentialMethod}
\left(\int_{\R^n}\mathcal{I}_s(f)(x)^{q}d^{\beta_{2,s}}(x) \, dx\right)^{1/q} 
\leq C\left(\int_{\R^n}f(x)^{p}d^{\beta_1}(x) \, dx\right)^{1/p},
\end{equation}

Inequality \eqref{eqn:FunctionDominatedByRieszPotentialOfGradientV2} is ``always" true and well-known. 
The issue is the 
inequality 
\cref{eqn:ChainOfInequalitiesForRieszPotentialMethod}. Indeed, we proved such inequality, following \cite{PerezWheedenPotentialOperatorsMaximalFunctionsAndGeneralizationsOfAInfty} and \cite{dyda2017muckenhoupt}, by proving \eqref{ConditionForRieszPotentialToMapLpToLqInDydaEtAl}. But such proof is a $T1$-type theorem, meaning that condition \eqref{ConditionForRieszPotentialToMapLpToLqInDydaEtAl} is actually a \underline{\emph{necessary condition}} for the 
inequality 
\cref{eqn:ChainOfInequalitiesForRieszPotentialMethod} to hold, as is easy to see by testing the second inequality in \cref{eqn:ChainOfInequalitiesForRieszPotentialMethod} on the characteristic function of a ball (as noted after the statement of Theorem 2.4 in \cite{PerezWheedenPotentialOperatorsMaximalFunctionsAndGeneralizationsOfAInfty}).

Now we get to the second consequence of  \cref{rem:WeGetComparabilityInTestingConditionForRieszPotential}. Indeed, for a set $E$ that is not porous but is median porous (or even weakly porous), the quantity $\left[ \frac{\mc{L}_{s'}(Q)}{l(Q)} \right]$ in \eqref{WQTimesHQEstimate} will not be bounded away from zero. This means that, by the comparability stated in \cref{rem:WeGetComparabilityInTestingConditionForRieszPotential}, \eqref{ConditionForRieszPotentialToMapLpToLqInDydaEtAl} will not hold for $q < \frac{np}{n - sp}$, and thus the 
inequality 
\cref{eqn:ChainOfInequalitiesForRieszPotentialMethod} will also not be true. Exactly the same argument is valid for the proof of Theorem \ref{cor:hardy_sobolev}, since the condition in \cite{1974Muckenhoupt} to prove the second inequality in \cref{eqn:ChainOfInequalitiesForRieszPotentialMethod} is also a testing condition, thus necessary for the inequality to hold.

So the Riesz potential method breaks down miserably for median porous, but non-porous, sets $E$ in the subcritical range $q < \frac{np}{n - sp}$ ($0 < s \leq 1$). Of course this does not mean that corresponding statements for $q < \frac{np}{n - sp}$ in terms of Muckenhoupt exponents, for Theorem \ref{cor:hardy_sobolev} and Theorem \ref{FractionalAndFirstOrderHardySobolevInequalitiesViaRieszPotentials} are false (we actually believe they are true). But some other method of proof must be found. 

\end{remark}

\begin{remark}\label{rem:WeightedPoincareInequalities}
By the methods of proof in this section, we can immediately improve known distance weighted Poincar\'e inequalities. E.g. 

\begin{enumerate}
\item[$\bullet$] Substituting the hypothesis $1 < p < n - \dim_A (E)$ with $1 < p < \textup{Mu}_p(E)$, we obtain, the analogue of  Theorem 10.28 in \cite{KinnunenLehrbackVahakangasBook} for median porous sets.

\item[$\bullet$] With the same proof as Theorem \ref{WeightedInequalityForRieszPotentialsForMedianPorousSets}, for $q = \frac{np}{n-p}$, and substituting the hypothesis $\dim_A (E) < \frac{q}{p}(n-p)$ with \eqref{MuckenhouptExponentHypothesesForRieszPotentialTheorem1} (but in this case there is no need for \eqref{MuckenhouptExponentHypothesesForRieszPotentialTheorem2}), we obtain the analogue of Theorem 10.29 in \cite{KinnunenLehrbackVahakangasBook} for median porous sets. 

\item[$\bullet$] Substituting the hypothesis $\dim_A (\partial \Omega) < \frac{q}{p}(n-p)$ with $n - \frac{q}{p}(n-p) < \textup{Mu}_\infty(\partial \Omega)$, we obtain the analogue of Theorem 10.30 in \cite{KinnunenLehrbackVahakangasBook} for median porous sets. 
\end{enumerate}

We leave the details to the reader.
\end{remark}

Next we improve necessary conditions for \cref{eqn:hardy-sobolev}.

\begin{theorem}\label{NecessaryConditionsForHardySobolevThm}
    Let $1 \leq p < q \leq np / (n - p) < \infty$, $\beta_1 < 0$ and let $\beta_2 = q/p(n - p + \beta_1) - n$. Assume additionally that $d^{\beta_1}$ is locally integrable. 
    If $q < \frac{np}{n - p}$ assume that $E$ is a porous set, and if $q = \frac{np}{n - p}$ assume that $E$ is a median porous set. 
    If the Hardy-Sobolev inequality 
    \begin{equation}\label{eqn:HardySobolevInequalityFor NecessaryConditions}
            \left(\int_{\R^n}|f|^{q}d^{\beta_2} \, dx\right)^{1/q} \leq C\left(\int_{\R^n}|\nabla f|^{p}d^{\beta_1} \, dx\right)^{1/p}
    \end{equation}
    holds for all $f \in C_c^\infty(\R^n)$ then 
        \begin{equation}\label{eqn:NecessaryConditionForHardySobolevWithBeta2SubcriticalCase}
        \underline{\textup{co dim}}_A(E) > -\beta_2 \quad \text{for} \quad q < \frac{np}{n - p} 
        \end{equation}
        and
    \begin{equation}\label{NecessaryConditionForHardySobolevWithBeta2}
        \textup{Mu}_\infty(E) > -\beta_2 > 0 
        \quad \text{for} \quad q = \frac{np}{n - p}.
    \end{equation}


\end{theorem}

\begin{remark}
    Conclusion \eqref{NecessaryConditionForHardySobolevWithBeta2} is an improvement of Theorem 6.2 in \cite{LehrbackVahakangasInbetweenSobolevandHardy} (see also Remark 6.5 in \cite{LehrbackVahakangasInbetweenSobolevandHardy}) where the same theorem is proved, but with the assumption that $E$ is a median porous set being replaced by the stronger assumption that $E$ is porous. Indeed, \cite{LehrbackVahakangasInbetweenSobolevandHardy} phrased the conclusion as  $\underline{\textup{co\,dim}}_A(E) > -\beta_2$, but if $E$ is porous, $\underline{\textup{co\,dim}}_A(E)  = \textup{Mu}_\infty(E) > 0$. 
    
\end{remark}

\begin{proof}
    First note that, by elementary computations, $\beta_2 = \frac{q}{p}(n - p + \beta_1) - n \leq \frac{q}{p} \beta_1$ if and only if $q \leq \frac{np}{n-p}$, which holds by hypothesis. Now, since both $q > p$ and $\beta_1 < 0$ by hypothesis, then $\frac{q}{p} \beta_1 < \beta_1 < 0$. So the hypotheses imply that $\beta_2 < \beta_1 < 0$.
    
    
    The first case $q < \frac{np}{n-p}$ was proved in Theorem 6.2 in \cite{LehrbackVahakangasInbetweenSobolevandHardy} (see also Remark 6.5 in \cite{LehrbackVahakangasInbetweenSobolevandHardy}).
    So we now prove \eqref{NecessaryConditionForHardySobolevWithBeta2}, assuming $q = \frac{np}{n-p}$. We follow the proof in Theorem 6.2 in \cite{LehrbackVahakangasInbetweenSobolevandHardy}, skipping most of the details since they are basically the same as in that proof.
    
    Equation $(25)$ in \cite{LehrbackVahakangasInbetweenSobolevandHardy} is valid in our case, as well as equation $(27)$, with the same proof. (As in Remark 6.5 in \cite{LehrbackVahakangasInbetweenSobolevandHardy}, the reasoning in the paragraph containing equation $(26)$ at the bottom of p.354 -only the part in p.354- is not needed, since we assume that $d^{\beta_1}$ is locally integrable.)

    At this point we depart from \cite{LehrbackVahakangasInbetweenSobolevandHardy}. Indeed, since $E$ is a median porous set, we can choose $\epsilon > 0$ small enough so that $d^{\epsilon \beta_1} \in A_\infty$. In particular, $d^{\epsilon \beta_1}$ is a doubling weight. Therefore, if $f = \log d$, we get
    \begin{equation}\label{eqn:ImprovementOfExponentInAInftyWeightsViaExpLogAndReverseHolder}
        \left(\dashint_{Q}d^{(q/p)\beta_1}\, dx\right)^{p/q} \leq C \left(\dashint_{2Q}d^{\epsilon\beta_1}\, dx\right)^{1/\epsilon} \leq C \left(\dashint_{Q}d^{\epsilon\beta_1}\, dx\right)^{1/\epsilon} \leq \exp(\ang{f}_Q)^{\beta_1},
    \end{equation}
    where the first inequality is equation $(27)$ in \cite{LehrbackVahakangasInbetweenSobolevandHardy}, and the last inequality follows from the already mentioned fact (see e.g. \cite[p.218, 6.3]{SteinFatBook}) that $w \in A_\infty$ if and only if
    \begin{equation}\label{ExpLogCharacterizationOfAInfty2}
        \dashint_{Q} w(x) \, dx \lesssim \exp\left(\dashint_Q \log w(x) \, dx\right).
    \end{equation}
    This implies that $d^{(q/p)\beta_1} \in A_\infty$ (again by \eqref{ExpLogCharacterizationOfAInfty2}). 
    
    Until now, we have not used the value of $q$.  If we take $q = \frac{np}{n - p}$, then 
    \[\frac{q\beta_1}{p} = q \frac{n-p}{np}\beta_2 = \beta_2,\]
    and we have that $\text{Mu}_\infty(E) > -\beta_2$ by \cref{thm:Ap_range}.

\end{proof}

%
\begin{remark}
The proof of Theorem \ref{NecessaryConditionsForHardySobolevThm}  only uses $q = \frac{np}{n - p}$ at the very end. It follows from the same proof that, in the range $q < \frac{np}{n - p}$, if we  only assume that $E$ is a median porous set, instead of a porous set, a necessary condition for the Hardy-Sobolev inequality \cref{eqn:hardy-sobolev} to hold is  
\begin{equation}\label{WeakerNecessaryConditionForHardySobolevInSubcriticalRange}
    \textup{Mu}_\infty(E) > - \beta_1 \frac{q}{p}, 
\end{equation}
which was previously unknown, but it does not appear to be sharp.

Indeed, in \cite[Thm. 6.2]{LehrbackVahakangasInbetweenSobolevandHardy}, assuming $E$ is porous, the resulting necessary condition for the Hardy-Sobolev inequality \cref{eqn:hardy-sobolev} to hold is
\begin{equation}\label{NecessaryConditionForHardySobolevInSubcriticalRangeLehrbackVahakangas}
\underline{\textup{co dim}}_A(E) > - \beta_1 \frac{q}{p} + \left( n - \frac{q}{p}n + q \right).
\end{equation}

As mentioned in Remark \ref{WeActuallyImproveSufficientConditionsForHardySobolev}, if a set is porous, then $0 < \underline{\textup{co\,dim}}_A(E) = \textup{Mu}_\infty(E)$. Since the term $\left( n - \frac{q}{p}n + q \right) > 0 $ if and only if $q < \frac{np}{n - p}$, the conclusion \eqref{NecessaryConditionForHardySobolevInSubcriticalRangeLehrbackVahakangas} in \cite[Thm. 6.2]{LehrbackVahakangasInbetweenSobolevandHardy} is stronger than \eqref{WeakerNecessaryConditionForHardySobolevInSubcriticalRange}, if we assume the set $E$ is porous. 
\end{remark}

\bibliographystyle{amsalpha}
\bibliography{./ref}

\bigskip
\noindent
\textsc{Department of Mathematics, Brown University, 75 Waterman St, Providence, RI, United States 02912} \\
\textit{Email address}: \texttt{marcus\_pasquariello@brown.edu}

\bigskip
\noindent
\textsc{Department of Mathematics,  University of Toronto, Room 6290, 40 St. George Street, Toronto, Ontario, Canada M5S 2E4} \\
\textit{Email address}: \texttt{ignacio.uriartetuero@utoronto.ca}

\end{document}